\documentclass{article}
\usepackage[margin=1in]{geometry}
\usepackage{epsfig,amsmath}
\usepackage{graphicx}
\usepackage{amssymb}
\usepackage{enumerate}
\usepackage{comment}
\usepackage{color}
\usepackage{paralist}

\numberwithin{equation}{section}

\newtheorem{lemma}{Lemma}[section]

\newtheorem{propn}[lemma]{Proposition}

\newtheorem{thm}[lemma]{Theorem}
\newtheorem{cor}[lemma]{Corollary}
\newtheorem{defn}[lemma]{Definition}
\newtheorem{remark0}[lemma]{Remark}

\DeclareMathOperator{\SLE}{SLE}

\DeclareMathOperator{\GFF}{GFF}

\newenvironment{proof}{{\em Proof.}}{\hspace*{\fill} $\square$ \medskip}

\newenvironment{proof2}[1]{{\em Proof of #1.}}{\hspace*{\fill} $\square$}


\begin{document}
	\newcommand{\test}{test}
	
	\newcommand{\R}{\mathbb{R}}
	\newcommand{\RR}{\mathbb{R}}
	\newcommand{\C}{\mathbb{C}}
	\newcommand{\N}{\mathbb{N}}
	\newcommand{\Z}{\mathbb{Z}}
	\newcommand{\I}{\mathbf{1}}
	\newcommand{\E}[2]{\mathbb{E}_{#1}\left[ #2 \right]}
	\newcommand{\HH}{\mathbb{H}}
	\newcommand{\dd}{\,{\mathrm d}}
	\newcommand{\ddd}{{\mathrm d}}
	\newcommand{\st}{\,\partial}
	\newcommand{\stt}{\partial}
	\newcommand{\e}{\operatorname{e}}
	\newcommand{\im}{\mathrm{i}}
	\newcommand{\eps}{\varepsilon}
	\newcommand{\grad}{\operatorname{grad}}
	\newcommand{\divg}{\operatorname{div}}
	\newcommand{\Prb}[2]{\mathbb{P}_{#1} \left({#2} \right)}
	\newcommand{\Prbs}[2]{\mathbf{P}_{#1} \left({#2} \right)}
	\newcommand{\Prbn}[1]{\mathbb{P} \left({#1} \right)}
	\newcommand{\ip}[2]{\left\langle #1, #2 \right\rangle}
	\newcommand{\hl}{-\frac{1}{2}\Delta}
	\newcommand{\hh}{\hat{h}}
	\newcommand{\he}{\hat{\eta}}
	\newcommand{\cc}[2]{#1\Scdot #2^{-1}+Q\log|(#2^{-1})'|}
	\newcommand{\re}[1]{#1_{R,\eps}}
	\newcommand{\tl}[1]{\tilde{#1}}
	\newcommand{\qiq}{q\in \mathcal{Q}}
	\definecolor{dgreen}{rgb}{0.2,0.8,0.6}
	\definecolor{dblue}{rgb}{0.3,0.3,1}
	\newcommand{\ellen}[1]{\textcolor{dgreen}{[#1]}}
	\newcommand{\nina}[1]{\textcolor{cyan}{[#1]}}
	\newcommand{\ec}[1]{\textcolor{dblue}{#1}}
	\newcommand{\nc}[1]{\textcolor{magenta}{#1}}
	\newcommand{\en}[1]{#1}
	\newcommand{\Hloc}{H^{-1}_{\op{loc}}}
	\newcommand{\Hinv}{H^{-1}}
	\newcommand{\B}{\mathbb{B}}
	\newcommand{\D}{\mathbb{D}}
	\newcommand{\Nz}{\mathbb{N}_0}
	\newcommand{\Q}{\mathbb{Q}}
	\renewcommand{\P}{\mathbb{P}}
	\newcommand{\bbH}{\mathbb{H}}
	\newcommand{\ga}{\gamma}
	\renewcommand{\GFF}{\op{GFF}}
	\newcommand{\ep}{\varepsilon}
	\newcommand{\1}{\mathbf{1}}
	\renewcommand{\Re}{\mathrm{Re}}
	\renewcommand{\Im}{\mathrm{Im}}
	\newcommand{\scr}{\mathscr}
	\def\cZ{\mathcal{Z}}
	\def\cY{\mathcal{Y}}
	\def\cX{\mathcal{X}}
	\def\cW{\mathcal{W}}
	\def\cV{\mathcal{V}}
	\def\cU{\mathcal{U}}
	\def\cT{\mathcal{T}}
	\def\cS{\mathcal{S}}
	\def\cR{\mathcal{R}}
	\def\cQ{\mathcal{Q}}
	\def\cP{\mathcal{P}}
	\def\cO{\mathcal{O}}
	\def\cN{\mathcal{N}}
	\def\cM{\mathcal{M}}
	\def\cL{\mathcal{L}}
	\def\cK{\mathcal{K}}
	\def\cJ{\mathcal{J}}
	\def\cI{\mathcal{I}}
	\def\cH{\mathcal{H}}
	\def\cG{\mathcal{G}}
	\def\cF{\mathcal{F}}
	\def\cE{\mathcal{E}}
	\def\cD{\mathcal{D}}
	\def\cC{\mathcal{C}}
	\def\cB{\mathcal{B}}
	\def\cA{\mathcal{A}}
	\def\cl{\mathfrak{l}}
	\def\strip{\mathbf{S}}
	\newcommand{\aryb}{\begin{eqnarray*}}
		\newcommand{\arye}{\end{eqnarray*}}
	\def\alb#1\ale{\begin{align*}#1\end{align*}}
	\newcommand{\eqb}{\begin{equation}}
	\newcommand{\eqe}{\end{equation}}
	\newcommand{\eqbn}{\begin{equation*}}
	\newcommand{\eqen}{\end{equation*}}
	\newcommand{\BB}{\mathbb}
	\newcommand{\ol}{\overline}
	\newcommand{\ul}{\underline}
	\newcommand{\op}{\operatorname}
	\newcommand{\la}{\langle}
	\newcommand{\ra}{\rangle}
	\newcommand{\bd}{\mathbf}
	\newcommand{\frk}{\mathfrak}
	\newcommand{\eqD}{\overset{d}{=}}
	\newcommand{\rtaD}{\overset{d}{\rightarrow}}
	\newcommand{\rta}{\rightarrow}
	\newcommand{\xrta}{\xrightarrow}
	\newcommand{\Rta}{\Rightarrow}
	\newcommand{\hookrta}{\hookrightarrow}
	\newcommand{\wt}{\widetilde}
	\newcommand{\wh}{\widehat} 
	\newcommand{\mcl}{\mathcal}
	\newcommand{\pre}{{\operatorname{pre}}}
	\newcommand{\lrta}{\leftrightarrow}
	\newcommand{\bdy}{\partial}
	\newcommand{\tr}{\op{tr}}
	\newcommand{\neu}{\op{Neu}}
	\newcommand{\uo}{\op{u}}
	\renewcommand{\L}{{\op{L}}}
	\newcommand{\Ro}{{\op{R}}}
	\newcommand{\rad}{{\op{rad}}}
	\newcommand{\cir}{{\op{circ}}}
	\newcommand{\w}{{\op{wedge}}}
	\newcommand{\Bessel}{\cB}

	\title{Conformal welding for critical Liouville quantum gravity} 
	\date{}
	\author{
		\begin{tabular}{c} Nina Holden\\ [-5pt] \small ETH Z\"urich \end{tabular}
		\begin{tabular}{c} \\[-5pt]\small  \end{tabular}
		\begin{tabular}{c} \\[-5pt]\small  \end{tabular}
		\begin{tabular}{c} \\[-5pt]\small  \end{tabular}
		\begin{tabular}{c} Ellen Powell\\ [-5pt] \small ETH Z\"urich \end{tabular}
	}
	\maketitle
	
	\begin{abstract}
		Consider two critical Liouville quantum gravity surfaces (i.e., $\gamma$-LQG for $\gamma=2$), each with the topology of $\HH$ and with infinite boundary length. We prove that there a.s.\ exists a conformal welding of the two surfaces, when the boundaries are identified according to quantum boundary length.  
		This results in a critical LQG surface decorated by an independent $\SLE_4$.
		Combined with the proof of uniqueness for such a welding, recently established by McEnteggart, Miller, and Qian (2018), this shows that the welding operation is well-defined. Our result is a
		critical analogue of Sheffield's quantum gravity zipper theorem (2016), which shows that a similar conformal welding for subcritical LQG (i.e., $\gamma$-LQG for $\gamma\in(0,2)$) is well-defined. \medskip
		
		\textbf{Keywords and phrases:} conformal welding, critical Liouville quantum gravity, Schramm-Loewner evolutions, quantum zipper.
	\end{abstract}

	\medskip

	\section{Introduction} 
	Let $\D_1$ and $\D_2$ be two copies of the unit disk $\D$, and suppose that $\phi:\partial \D_1\to\partial \D_2$ is a homeomorphism. Then $\phi$ provides a way to identify the boundaries of $\D_1$ and $\D_2$, and hence produce a topological sphere. The classical \emph{conformal welding} problem is to endow this topological sphere with a natural conformal structure. When the sphere is uniformised (i.e., when it is conformally mapped to $\BB S^2$) we get a simple loop $\eta$ on $\BB S^2$, which is the image of the unit circle. 
	Equivalently, the conformal welding problem consists of finding a triple $\{\eta,\psi_1,\psi_2\}$, where $\eta$ is a simple loop on $\BB S^2$, and $\psi_1$ and $\psi_2$ are conformal transformations taking $\D_1$ and $\D_2$, respectively, to the two components of $\BB S^2\setminus\eta$, such that $\phi= \psi_2^{-1}\circ \psi_1$. If such a triple exists and is uniquely determined by $\phi$ (up to M\"obius transformations of the sphere) then one says that the conformal welding (associated to $\phi$) is well-defined.  
	
	The extension of this problem to the setting of \emph{random} homeomorphisms has received much attention in recent years; in particular, when the random curves and homeomorphisms are related to natural conformally invariant objects such as Schramm--Loewner evolutions 
	(SLE) and Liouville quantum gravity (LQG). 
	This will be the focus of the present paper. In particular, we consider the case of critical ($\gamma=2$) LQG, which is associated with SLE$_4$.
	
	Roughly speaking, LQG is a theory of random fractal surfaces obtained by distorting the Euclidean metric by the exponential of a real parameter $\gamma$ times a Gaussian free field (GFF). Such random surfaces give rise to random conformal welding problems, for instance, when the homeomorphism $\phi$ corresponds to gluing the boundaries of two discs according to their LQG-boundary lengths. Weldings of this type have been studied in several recent works \cite{ajks10,ajks11,Sh16,DMS18+,MMQ18}. In particular, for a class of homeomorphisms defined in terms  subcritical LQG measures ($\gamma$-LQG for $\gamma\in(0,2)$) existence and uniqueness of the conformal welding was established by Sheffield \cite{Sh16}, and the interface $\eta$ was proven to have the law of an SLE$_{\kappa}$ with $\kappa=\gamma^2\in(0,4)$. Uniqueness of a random conformal welding where the interface $\eta$ has the law of an SLE$_4$ was recently established by McEnteggart, Miller, and Qian \cite{MMQ18}. 
	
	Let us now make the set-up more precise. Given a parameter $\gamma\in(0,2]$, a simply connected domain $D\subset\C$, and an instance $h$ of (some variant of) a GFF on $D$, one would heuristically like to define the $\gamma$-LQG ``surface" associated with $(D,h)$ to be the 2d Riemannian manifold with metric tensor $e^{\gamma h}(dx^2+dy^2)$ on $D$. This definition does not make rigorous sense since $h$ is a distribution and not a function, but one can prove by regularising the field (\cite{Kah85,RV10,DS11,Ber17}) that $h$ induces a so-called ``$\gamma$-LQG area measure" $\mu^\gamma_h$ in $D$ (with formal definition 
	$\e^{\gamma h(z)} dx dy$) and a ``$\gamma$-LQG boundary length measure" $\nu^\gamma_h$ along $\partial D$ (with formal definition 
	$\e^{(\gamma/2) h(x)} ds$). The case $\gamma=2$ is known as \emph{critical}, because the regularisation procedure used when $\gamma\in(0,2)$ breaks down at this point, and defining the critical measure requires a different strategy.
	
	Given two pairs $(D_1,h_1)$ and $(D_2,h_2)$, such that $0<\nu^\gamma_{h_1}(\partial D_1)=\nu^\gamma_{h_2}(\partial D_2)<\infty$ one may define the homeomorphism $\phi$ that identifies $\partial D_1$ and $\partial D_2$ according to these boundary lengths. That is, $\phi:\partial D_1\to\partial D_2$ is such that for all $A\subset\partial D_1$, $\nu^\gamma_{h_1}(A)= \nu^\gamma_{h_2}(\phi(A))$.
	One can then ask if the conformal welding associated to $\phi$, as described above, is well-defined. 
	
	In fact, it is more convenient to consider this problem in the setting where $(D_i,h_i)$ for $i=1,2$ have \emph{infinite} boundary length. To explain the interpretation of the conformal welding problem in this framework, and to state our main theorem, we need the following definition. For a simply connected domain $D\subseteq\C$ let $H^{-1}_{\op{loc}}(D)$ denote the space of generalised functions $h$ on $D$ such that for any open set $U$ with $\overline{U}\Subset D$, the distribution $h|_U$ is in the Sobolev space $H^{-1}(U)$. 
	\begin{defn}
		\label{def::lqg}
		Let $\gamma\in(0,2]$. A \emph{$\gamma$-Liouville quantum gravity (LQG) surface} is an equivalence class of pairs $(D,h)$, where $D\subseteq \C$ is a simply connected domain and $h\in H^{-1}_{\op{loc}}(D)$ is a distribution (or generalised function) on $D$. 
		Two pairs $(D_1,h_1)$ and $(D_2,h_2)$ are defined to be equivalent if there is a conformal map $\psi:D_2\to D_1$ such that  \eqb h_2=h_1\circ\psi+Q_\gamma\log|\psi'| \label{eqn:coc}, \eqe where $Q_\gamma=2/\gamma+\gamma/2$.\footnote{Note that this equivalence relation depends on $\gamma$. Also note that $h_1\in H^{-1}_{\op{loc}}(D_1)$ if and only if $h_2\in H^{-1}_{\op{loc}}(D_2)$.}
	\end{defn}
	
	It follows from the regularisation procedure used to define the LQG measures that if $h_1$ and $h_2$ are related as in \eqref{eqn:coc}, then the push-forward of $\mu_{h_2}^\gamma$ (resp., $\nu_{h_2}^\gamma$) by $\psi$ is equal to $\mu_{h_1}^\gamma$ (resp., $\nu_{h_1}^\gamma$).
	
	In this paper the distribution $h$ will always be a Gaussian free field or a related kind of distribution.
	We think of two equivalent pairs $(D_1,h_1)$ and $(D_2,h_2)$ as two different \emph{parametrisations} of the same $\gamma$-LQG surface; indeed, the previous paragraph implies that they describe equivalent LQG measures. We will often abuse notation and refer to $(D,h)$ as a $\gamma$-LQG surface, i.e., we identify $(D,h)$ with its equivalence class. \en{If we introduce a $\gamma$-LQG surface $\cS$ by writing $\cS=(D,h)$ we mean that $\cS$ is a $\gamma$-LQG surface (i.e., an equivalence class) while $(D,h)$ is a particular parametrisation of this surface.} Recall that by the Riemann mapping theorem, a quantum surface comes equipped with a well-defined notion of topology: either that of $\HH$ (equivalently, some other bounded simply connected domain), $\C$, or $\BB S^2$. 
	
	
	We also consider \emph{marked} quantum surfaces; these are tuples $(D,h,z_1,\dots,z_k)$ for $k\in\N$ and $z_1,\dots,z_k\in D\cup \partial D$. In order for two marked quantum surfaces $(D_1,h_1,z_1,\dots,z_k)$ and $(D_2,h_2,w_1,\dots,w_k)$ to be equivalent we require that there exists a $\psi$ as in Definition \ref{def::lqg}, which also satisfies $z_1=\psi( w_1),\dots,z_k=\psi(w_k)$. 
	
	Let us now come back to conformal welding: we will consider the following alternative version of the problem. Suppose that $\HH_1$, $\HH_2$ are two copies of the upper half-plane and $\phi$ is a homeomorphism from $\R_+$ to $\R_-$. The problem is to find a triple $\{\eta, \psi_1,\psi_2\}$, where $\eta$ is a simple curve in $\HH$ from $0$ to $\infty$ and $\psi_1,\psi_2$ are conformal transformations taking $\HH_1$ and $\HH_2$ to the two components of $\HH\setminus \eta$, such that $\phi=\psi_2^{-1}\circ \psi_1$. If such a triple exists and is unique then we say that the conformal welding associated to $\phi$ is well-defined.
	
	Given two doubly-marked $\gamma$-quantum surfaces with the topology of $\HH$, parametrised by $(h_1,\HH,0,\infty)$ and  $(h_2,\HH,0,\infty)$, and such that $\nu^\gamma_{h_1}(\R_+)=\infty$, $\nu^\gamma_{h_2}(\R_-)=\infty$, but $\nu_{h_1}^\gamma,\nu_{h_2}^\gamma$ give finite and positive mass to bounded intervals of positive length, we can define the homeomorphism $\phi$ that identifies $\R_+$ and $\R_-$ according to $\nu_{h_1}^\gamma,\nu_{h_2}^\gamma$ boundary length. That is, $\nu_{h_1}^\gamma([0,a])=\nu_{h_2}^\gamma([\phi(a),0])$ for all $a\in [0,\infty)$. The main result of this paper is that for certain critical $(\gamma=2)$ quantum surfaces known as quantum wedges (see Section \ref{sec::lqgandwedges}), this conformal welding problem has a solution. See Figure \ref{fig:welding} for an illustration.

	\begin{thm}	Let $\cS=(\BB H,h,0,\infty)$ be a $(2,1)$-quantum wedge, and let $\eta$ be an $\SLE_4$ from 0 to $\infty$ which is independent of $h$. Let $D^\L\subset\BB H$ (resp., $D^\Ro\subset\BB H$) be the points of $\BB H$ lying strictly to the left (resp., right) of $\eta$, and define the 2-LQG surfaces $\cS^\L=(D^\L,h|_{D^\L},0,\infty)$ and $\cS^\Ro=(D^\Ro,h|_{D^\Ro},0,\infty)$. 
		
		Then $\cS^\L$ and $\cS^\Ro$ are independent 2-LQG surfaces, and each surface has the law of a $(2,2)$-quantum wedge. Furthermore, the quantum boundary lengths along $\eta$ as defined by $\cS^\L$ and $\cS^\Ro$ agree. \vspace{0.1cm}
		\label{thm1}
	\end{thm}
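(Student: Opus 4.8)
\begin{proof3}

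The plan is to run the critical analogue of Sheffield's quantum-gravity zipper. Parametrise $\eta$ by half-plane capacity, let $(g_t)_{t\ge 0}$ be its forward Loewner maps with driving function $W_t=2B_t$ (so $\kappa=4$), and note that $g_t\colon\HH\setminus\eta([0,t])\to\HH$ welds the two sides of $\eta([0,t])$ onto an interval $[A_t,B_t]\ni W_t$ of $\R$. Transport the wedge field through the map and recentre: set $h_t:=h\circ g_t^{-1}+Q\log|(g_t^{-1})'|$ with $Q=Q_2=2$, and $\widehat h_t:=h_t(\,\cdot\,+W_t)$. The core of the argument is to show that, for each fixed $t$, the decorated surface $(\HH,\widehat h_t,g_t(\eta|_{[t,\infty)})-W_t)$ has the same law as $(\HH,h,\eta)$, and that $h_t$ assigns to the welded interval $[A_t,B_t]$ the same quantum length computed from its left side as the $\nu^2_h$-length of the left side of $\eta([0,t])$, and likewise on the right. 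Granting this, Theorem~\ref{thm1} follows: letting $t\to\infty$ the welded interval exhausts $\eta$, which gives the agreement of the two boundary-length measures along $\eta$; the zipper stationarity translates, after the standard time change, into the scale invariance that characterises quantum wedges (see Section~\ref{sec::lqgandwedges} and \cite{DMS18+}), identifying $\cS^\L$ and $\cS^\Ro$ each as a quantum wedge, whose parameters are then pinned down by the weld-additivity rule together with the left/right symmetry of $\SLE_4$, giving a $(2,2)$-wedge in each case; and independence of $\cS^\L,\cS^\Ro$ comes from the product structure revealed by the zipper, since the left and right quantum lengths of $[A_t,B_t]$ are read off from measurably independent parts of $h$ and of the increments of $B$.

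The heart of the matter is the stationarity statement — the critical zipper itself. In the subcritical regime this is a Girsanov/It\^o-calculus argument: testing $t\mapsto h_t$ against smooth $\rho$, one checks that the Loewner coordinate change together with the $Q\log|(g_t^{-1})'|$ term has a semimartingale decomposition whose drift cancels precisely when $\kappa=\gamma^2$ and $Q=2/\gamma+\gamma/2$, the wedge field being chosen (via its reparametrised Bessel/Brownian embedding) so that the joint law of field and curve is preserved. At $\gamma=2$ the parameter matching $\kappa=\gamma^2=4$, $Q=2$ still holds, and the drift cancellation is formally the same computation; the work is to justify it when $h$ is the \emph{critical} wedge field, which is defined only through a limiting (derivative-martingale / Seneta--Heyde) procedure. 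I would proceed by approximation: realise the critical wedge field as a limit of its $\gamma\uparrow 2$ counterparts in a common coupling, invoke the \emph{sub}critical zipper \cite{Sh16,DMS18+} at each $\gamma<2$, and pass to the limit, controlling (i) convergence of the subcritical wedges and of their LQG boundary measures to the critical ones, (ii) convergence of $\SLE_\kappa$ to $\SLE_4$, and (iii) the delicate point, that the conformal-welding operation commutes with the limit, which requires tightness of the interfaces plus an identification of subsequential limits against the known marginal laws. Alternatively one works directly with the critical strip-average process of the wedge field, which at criticality is a \emph{driftless} semimartingale, and redoes the It\^o computation at the cost of tracking logarithmic corrections.

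Once the stationarity is in hand, the boundary-length matching uses the conformal covariance of $\nu^2$ recorded after \eqref{eqn:coc}: applying it to the Riemann map $\HH\to D^\L$ shows that the quantum length along $\eta([0,t])$ seen from $\cS^\L$ equals $\nu^2_{h_t}([A_t,B_t])$, and the same identity holds from $\cS^\Ro$; here one also uses that the critical boundary measure of an arc depends only on the field near that arc, a locality property inherited from the regularised approximations. Letting $t\to\infty$ completes this step.

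The main obstacle is the critical zipper of the second paragraph. Neither the critical wedge field nor the critical boundary measure has a clean Gaussian description, and in particular the first moment of $\nu^2_h(I)$ is infinite for every interval $I$, so the martingale estimates underlying the It\^o argument, and the control of the $\gamma\uparrow 2$ limit, must be run in $L^p$ for $p<1$ or on truncated measures and then passed to the limit; moreover the requirement that conformal welding commute with the limit $\gamma\uparrow 2$ is itself a genuinely delicate continuity statement about an operation that is not continuous in any obvious topology. Securing these uniform estimates and justifying the interchange of limits is where essentially all the work lies.

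\end{proof3}
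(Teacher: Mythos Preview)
Your construction of the critical zipper via subcritical approximation ($\gamma\uparrow 2$) is exactly the paper's approach (Section~\ref{sec:approx}), and you correctly identify the main technical obstacles there: lack of first moments for the critical measure, and controlling the welding operation under the limit. The boundary-length agreement along $\eta$ also follows from the zipper essentially as you indicate.

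The gap is in how you pass from the zipper to the \emph{identification} of $\cS^\L,\cS^\Ro$ as $(2,2)$-wedges and to their \emph{independence}. Your appeal to ``scale invariance that characterises quantum wedges'' together with ``weld-additivity'' and ``left/right symmetry'' is not a proof: zipper stationarity tells you the joint law of $(h,\eta)$ is preserved under zipping, but it does not directly hand you the marginal law of either side-surface, nor does knowing each is \emph{some} wedge pin the parameter to $(2,2)$ by those considerations alone. Your independence argument is also incorrect as stated: the left and right quantum lengths of the welded interval $[A_t,B_t]$ are \emph{not} read off from ``measurably independent parts of $h$'' --- they come from the same field restricted to the same interval.

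The paper supplies the missing mechanism via a separate and substantial ingredient (Section~\ref{sec:zoom}, Proposition~\ref{prop:zoom}): the critical analogue of Sheffield's rooted-measure description, showing that when one zooms in near a $\nu_h$-typical boundary point, the surface converges to a $(2,2)$-wedge. This requires the derivative-martingale structure of the critical measure: reweighting by the truncated approximation $d^\beta_{h,\ep}$ turns the radial part of the field into (minus) a $3$-dimensional Bessel process (Lemma~\ref{prop:rn}), which is exactly what appears in Definition~\ref{def:critical-wedge}. The paper then combines this with the zipper as follows (Proposition~\ref{prop:zoom-twopoints} and the final proof): pick points $\cX,\cY$ at equal $\nu_h$-distance to the right and left of $0$, zip up until they meet at the origin, add a constant $C$ to the field and let $C\to\infty$. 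Applying the zooming-in proposition conditionally on the field along the imaginary axis (which separates the neighbourhoods of $\cX$ and $\cY$ into conditionally independent pieces) gives convergence to a pair of \emph{independent} $(2,2)$-wedges; invariance of the $(2,1)$-wedge law under adding constants then closes the argument. You are missing this entire ``zoom in at a quantum-typical point'' step, and in the critical case it is genuinely new work, since the simple Girsanov shift that makes the subcritical version immediate is replaced by the Bessel/derivative-martingale description.
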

	
	\begin{figure}[h]
		\centering
		\includegraphics[scale=1]{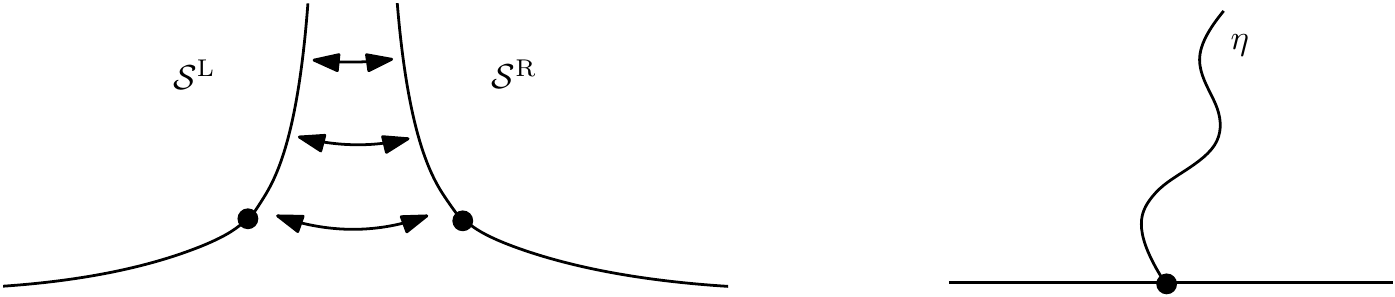}
		\caption{Illustration of the conformal welding problem. We get a topological half-plane by welding together the two surfaces $\cS^{\mathrm{L}}$ and $\cS^{\mathrm{R}}$. By Corollary \ref{cor:wd}, if $\cS^{\mathrm{L}}$ and $\cS^{\mathrm{R}}$ are independent $(2,2)$-quantum wedges and the welding is defined in terms of $2$-LQG boundary length, then the resulting surface (a $(2,1)$-quantum wedge) has an a.s.\ uniquely defined conformal structure, and the interface $\eta$ has the law of an SLE$_4$.} 
		\label{fig:welding}
	\end{figure}
	
	\begin{figure}
		\centering
		\includegraphics[scale=1]{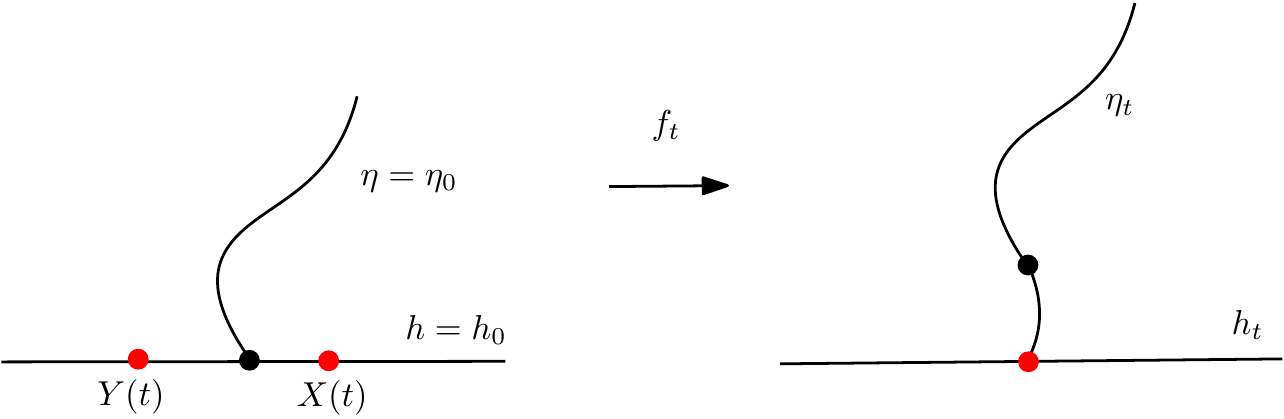}
		\caption{Consider a $(2,1)$-quantum wedge $(\HH,h,0,\infty)$ decorated by an independent SLE$_4$ $\eta$. The quantum zipper identifies segments $[0,X(t)]$ and $[Y(t),0]$, each of quantum length $t>0$. This gives a new surface/curve pair $(h_t,\eta_t)$ with the same law as before. By Theorem \ref{thm:criticalzipper}, the processes of zipping up and zipping down are measurable with respect to $(h,\eta)$.
		}
		\label{fig:zipper}
	\end{figure}
	
	\en{We remark that independence of the $2$-LQG surfaces $\cS^\L$ and $\cS^\Ro$ in Theorem \ref{thm1} does \emph{not} mean that the fields $h|_{D^\L}$ and $h|_{D^\Ro}$ are independent;} \en{these two fields are dependent e.g.\ since they induce the same quantum length measure along $\eta$.} 
	\en{Instead, we have independence of the two surfaces viewed as equivalence classes. This means that if we embed the two surfaces in some standard form then the fields in this embedding are independent. Explicitly, if $(\BB H, \wt h^\L,0,\infty)$ is an embedding of $\cS^\L$ such that (say) the unit half-circle has unit mass, and $(\BB H, \wt h^\Ro,0,\infty)$ is defined similarly for $\cS^\Ro$, then the fields $\wt h^\L$ and $\wt h^\Ro$ are independent.}
	
	\en{By Theorem \ref{thm1} we have a quantum  length measure along $\eta$ which is defined by considering the LQG boundary measure of the surfaces $\cS^\L$ and $\cS^\Ro$. We remark that this length measure along $\eta$ can be defined equivalently in a more intrinsic way by considering $e^{h}d\frk m$, where $\frk m$ is the measure supported on $\eta$ given by its $3/2$-dimensional Minkowski content. This equivalence was proved for the subcritical zipper in \cite{benoist} and the critical case follows by the same argument. 
	}
	
	The following uniqueness result concerning the conformal welding problem of Theorem \ref{thm1} was recently established in \cite[Theorem 2]{MMQ18}.
	\begin{thm}[McEnteggart-Miller-Qian '18]
		Let $\eta$ be an SLE$_4$ in $\HH$ from 0 to $\infty$. Suppose that $\varphi:\HH\to\HH$ is a homeomorphism which is conformal in $\HH\setminus\eta$ and such that $\varphi(\eta)$ has the same law as $\eta$. Then $\varphi$ is a.s.\ a conformal automorphism of $\HH$.
		\label{thm2}
	\end{thm}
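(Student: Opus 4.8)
We describe the approach we would take; it recasts the statement in terms of the Gaussian free field and its level lines. First, after composing $\varphi$ with a suitable M\"obius transformation of $\HH$ we may assume that $\varphi$ fixes $0$ and $\infty$ and maps each component of $\R\setminus\{0\}$ to itself. Since $\SLE_4$ is a.s.\ a simple curve avoiding $\R\setminus\{0\}$, the set $\HH\setminus\eta$ has exactly two connected components $D^{\L}$ and $D^{\Ro}$, on each of which $\varphi$ is conformal. (If conformal removability of the $\SLE_4$ curve were available this would finish the proof immediately, since a homeomorphism of $\HH$ that is conformal off a removable curve is conformal on all of $\HH$, hence M\"obius, and the hypothesis $\varphi(\eta)\eqD\eta$ would not even be needed. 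As removability of $\SLE_4$ is not available, we instead proceed as follows.)

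The key idea is to couple $\eta$ with a GFF for which it is a level line. By the Schramm--Sheffield coupling there is a GFF $\Gamma$ on $\HH$ with boundary data $-\lambda$ on $\R_-$ and $+\lambda$ on $\R_+$ (for a suitable constant $\lambda$) whose level line from $0$ to $\infty$ is $\eta$; moreover $\eta$ is a.s.\ a measurable function of $\Gamma$, and conditionally on $\eta$ the restrictions $\Gamma^{\L}:=\Gamma|_{D^{\L}}$ and $\Gamma^{\Ro}:=\Gamma|_{D^{\Ro}}$ are independent GFFs carrying the canonical boundary data determined by $\eta$. We enlarge the probability space so that it also carries such a $\Gamma$, taken conditionally independent of $\varphi$ given $\eta$; this is legitimate because the hypotheses of the theorem only constrain the pair $(\eta,\varphi)$.

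Next we transport this decomposition through $\varphi$. Since $\varphi$ restricts to a conformal isomorphism of $D^{\L}$ (resp.\ $D^{\Ro}$) onto the corresponding component of $\HH\setminus\varphi(\eta)$, and the GFF is conformally invariant, conditionally on $(\eta,\varphi)$ the fields $\Gamma^{\L}\circ\varphi^{-1}$ and $\Gamma^{\Ro}\circ\varphi^{-1}$ are independent GFFs on the two sides of $\varphi(\eta)$ carrying precisely the canonical boundary data determined by $\varphi(\eta)$ (here one uses that $\varphi$ fixes $0,\infty$ and preserves the two boundary arcs). Since this conditional law depends only on $\varphi(\eta)$, and $\varphi(\eta)\eqD\eta$ by hypothesis, it follows that $\big(\varphi(\eta),\,\Gamma^{\L}\circ\varphi^{-1},\,\Gamma^{\Ro}\circ\varphi^{-1}\big)$ has the same law as $\big(\eta,\Gamma^{\L},\Gamma^{\Ro}\big)$. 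The next ingredient is a \emph{reconstruction} statement for the GFF across its level line: a GFF on $\HH$ is a.s.\ determined by its level line together with its restriction to the complement of the level line, and conversely, an $\SLE_4$ curve from $0$ to $\infty$ together with two conditionally independent GFFs carrying the canonical boundary data on its complementary components welds to a genuine GFF on $\HH$ having that curve as its level line. The ``welding'' direction is where care is needed, but it should follow from the facts that $\eta$ has zero Lebesgue measure --- so that a GFF-type distribution on $\HH\setminus\eta$ has a canonical extension to $\HH$ --- and that the conditional mean of $\Gamma$ given $\eta$ is a bounded harmonic function off $\eta$. Applying this reconstruction to $\big(\varphi(\eta),\Gamma^{\L}\circ\varphi^{-1},\Gamma^{\Ro}\circ\varphi^{-1}\big)$ produces a GFF $\tilde\Gamma$ on $\HH$ with $\tilde\Gamma\eqD\Gamma$, whose level line is $\varphi(\eta)$ and whose restrictions to the two sides of $\varphi(\eta)$ are $\Gamma^{\L}\circ\varphi^{-1}$ and $\Gamma^{\Ro}\circ\varphi^{-1}$. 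Thus $\varphi$, viewed as a conformal isomorphism of $\HH\setminus\eta$ onto $\HH\setminus\varphi(\eta)$, intertwines the two GFFs $\Gamma$ and $\tilde\Gamma$ on $\HH$ and carries the level line of $\Gamma$ to that of $\tilde\Gamma$.

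The final and hardest step is to upgrade this to: $\varphi$ extends to a conformal automorphism of $\HH$. The difficulty is exactly the absence of removability --- one must rule out that $\varphi$, although conformal off the Lebesgue-null curve $\eta$ and transporting one GFF to another with the same law, nevertheless distorts conformal moduli across $\eta$ (equivalently, that $\varphi$ is a homeomorphism with vanishing Beltrami coefficient off $\eta$ that fails to be quasiconformal). I would attack this by exploiting the distributional identity $\tilde\Gamma\eqD\Gamma$ together with the measurability of the level line with respect to the field: for instance, since $\varphi(\eta)\eqD\eta$, the construction can be iterated, building further maps with the same properties on the successively welded configurations, and non-conformality of $\varphi$ ought to force a monotone drift of the conformal radius of a fixed reference ball relative to the curve, contradicting the stationarity encoded in $\varphi(\eta)\eqD\eta$; alternatively one can attempt a purely local analysis near a Lebesgue-typical point of $\eta$, using the domain Markov property of the GFF together with the scaling and reversibility of $\SLE_4$ to obtain enough regularity of $\varphi$ there. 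Either way, gaining control of $\varphi$ across $\eta$ without removability in hand is the crux of the argument; once $\varphi$ is shown to be conformal on $\HH$ it is a M\"obius transformation, completing the proof.
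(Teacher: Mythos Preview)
The paper does not actually prove this theorem: it is quoted as \cite[Theorem~2]{MMQ18} and used as a black box. So there is no ``paper's own proof'' to compare against here. That said, your proposal deserves a direct assessment.

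Your write-up is honest about where the difficulty lies, and that honesty exposes the gap. Everything up to the construction of $\tilde\Gamma$ is fine and essentially tautological: pushing forward the two conditional GFFs by the conformal maps $\varphi|_{D^{\L}}$, $\varphi|_{D^{\Ro}}$ and using $\varphi(\eta)\eqD\eta$ reproduces the joint law $(\eta,\Gamma^{\L},\Gamma^{\Ro})$. But this step uses nothing about $\varphi$ beyond the hypotheses, so it cannot by itself constrain $\varphi$ further. The entire content of the theorem is in your ``final and hardest step'', and there you do not give an argument --- you list two strategies (an iteration/drift argument, and a local regularity analysis near typical points of $\eta$) without carrying either out. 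The iteration idea in particular is circular as stated: the stationarity you would invoke is exactly the hypothesis $\varphi(\eta)\eqD\eta$, and it is not clear what quantity would drift monotonically under a non-conformal $\varphi$ while being controlled in law by that hypothesis. The GFF coupling, as you use it, only sees $\varphi$ through the curve $\varphi(\eta)$ and the two conformal restrictions, which is not enough to detect failure of conformality across $\eta$.

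For context, the actual argument in \cite{MMQ18} does not go through the GFF at all. It is analytic/geometric: they identify deterministic regularity properties of a simple curve (roughly, bounds on how often the curve can make near-self-intersections at small scales) which, when satisfied by both $\eta$ and $\varphi(\eta)$, force any homeomorphism conformal off $\eta$ to be conformal everywhere. They then verify that $\SLE_\kappa$ for $\kappa\in(0,8)$ a.s.\ satisfies these properties. In other words, they prove a conditional removability statement tailored to $\SLE$-type curves, which is precisely the missing ingredient your sketch flags but does not supply.
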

	Hence, if $\{\eta, \psi_1,\psi_2\}$ and $\{\eta', {\psi}'_1,{\psi}'_2\}$ are two solutions to the conformal welding problem associated with the same homeomorphism $\phi$, and $\eta$, $\eta'$ both have the law of $\SLE_4$, then applying the above theorem to the map $\varphi$ which is set equal to $ \psi'_2 \circ \psi_2^{-1}$ on the left of $\eta$ and $\psi'_1 \circ \psi_1^{-1}$ on the right, it follows that $\varphi$ must be a conformal automorphism of $\HH$. 
	Theorems \ref{thm1} and \ref{thm2} together therefore imply that the conformal welding operation for critical LQG is well-defined. 
	\begin{cor}\label{cor:wd}
		Consider two $(2,2)$-quantum wedges 
		$\cS^{\op{L}}=(\HH,h^{\op{L}},0,\infty)$ and 
		$\cS^{\op{R}}=(\HH,h^{\op{R}},0,\infty)$, and identify the boundary arc 
		$[0,\infty)$ of $\cS^{\op{L}}$ and the boundary arc $(-\infty,0]$ of $\cS^{\op{R}}$ 
		according to $2$-LQG boundary length. This a.s.\ gives a uniquely defined conformal welding of the two 2-LQG surfaces such that the interface $\eta$ between the surfaces has the law of a chordal SLE$_4$.
	\end{cor}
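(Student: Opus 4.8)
The plan is to deduce the corollary formally from Theorem~\ref{thm1} (existence, together with the identification of the interface law) and Theorem~\ref{thm2} (uniqueness), essentially as sketched in the paragraph preceding the statement; the only genuine work is to arrange matters so that the hypotheses of Theorem~\ref{thm2} hold verbatim.

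\textbf{Existence.} Since the joint law of a pair of independent $(2,2)$-quantum wedges is a fixed probability law, it suffices to produce a single coupling of $(\cS^\L,\cS^\Ro)$ together with a solution of the welding problem. For this I would take $\cS=(\HH,h,0,\infty)$ a $(2,1)$-quantum wedge and $\eta$ an independent $\SLE_4$, and let $D^\L,D^\Ro,\cS^\L,\cS^\Ro$ be as in Theorem~\ref{thm1}. Choose conformal maps $\psi_1\colon\HH\to D^\L$ and $\psi_2\colon\HH\to D^\Ro$ fixing $0$ and $\infty$, and set $h^\L=h|_{D^\L}\circ\psi_1+Q_2\log|\psi_1'|$ and $h^\Ro=h|_{D^\Ro}\circ\psi_2+Q_2\log|\psi_2'|$, so that $(\HH,h^\L,0,\infty)$ and $(\HH,h^\Ro,0,\infty)$ are embeddings of $\cS^\L$ and $\cS^\Ro$; by Theorem~\ref{thm1} these are independent $(2,2)$-quantum wedges. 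Because each $\psi_i$ carries quantum boundary length on $\HH$ to quantum boundary length on $D^\L$, resp.\ $D^\Ro$, the assertion of Theorem~\ref{thm1} that $\cS^\L$ and $\cS^\Ro$ induce the same quantum length measure along $\eta$ translates, after pulling back by $\psi_1$ and $\psi_2$, into $\psi_2^{-1}\circ\psi_1=\phi$ on $\R_+$, where $\phi$ identifies $\R_+$ and $\R_-$ according to the $2$-LQG boundary lengths of the two wedges. Thus $\{\eta,\psi_1,\psi_2\}$ solves the welding problem associated with $\phi$, and its interface $\eta$ is an $\SLE_4$.

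\textbf{Uniqueness.} Next, suppose that on some probability space $\{\eta,\psi_1,\psi_2\}$ and $\{\eta',\psi_1',\psi_2'\}$ both solve the welding problem for the same $\phi$, with $\eta$ and $\eta'$ each distributed as $\SLE_4$. Define $\varphi\colon\HH\to\HH$ to equal $\psi_1'\circ\psi_1^{-1}$ on the closed left component of $\HH\setminus\eta$ and $\psi_2'\circ\psi_2^{-1}$ on the closed right component. The crucial check is that these agree on $\eta$: if $x\in\eta$ and $a:=\psi_1^{-1}(x)\in\R_+$, then the first welding relation gives $\psi_2^{-1}(x)=\phi(a)$ and the second gives $\psi_2'(\phi(a))=\psi_1'(a)$, so $\psi_1'(\psi_1^{-1}(x))=\psi_2'(\psi_2^{-1}(x))$. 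Hence $\varphi$ is a homeomorphism of $\HH$, conformal on $\HH\setminus\eta$, fixing $0$ and $\infty$, with $\varphi(\eta)=\eta'$ which has the law of $\SLE_4$. Theorem~\ref{thm2} then forces $\varphi$ to be a.s.\ a conformal automorphism of $\HH$, and since it fixes $0$ and $\infty$ this gives $\varphi(z)=\lambda z$ for a random $\lambda>0$, so $(\eta',\psi_1',\psi_2')=(\lambda\eta,\lambda\psi_1,\lambda\psi_2)$. Therefore the solution with $\SLE_4$ interface is unique up to the scaling $z\mapsto\lambda z$, and in particular the conformal structure of the welded surface is a.s.\ well-defined; combined with the existence part, this is the corollary.

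\textbf{Main obstacle.} Since Theorems~\ref{thm1} and~\ref{thm2} are available, the corollary is essentially a formal consequence, and the only delicate point is the bookkeeping in the uniqueness step: verifying that $\varphi$, glued from two boundary-extending conformal maps along the $\SLE_4$ interface, is genuinely a homeomorphism of $\HH$ that is conformal off $\eta$ — so that Theorem~\ref{thm2} applies literally (this uses that $\eta$ is a Jordan arc and standard boundary regularity of conformal maps on the two components) — and keeping careful track of the marked points $0$ and $\infty$ in order to pin down the residual scaling ambiguity.
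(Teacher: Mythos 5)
Your proposal is correct and follows essentially the same route as the paper: existence is obtained by quoting Theorem~\ref{thm1}, and uniqueness by gluing the two conformal maps across the $\SLE_4$ interface and invoking Theorem~\ref{thm2}, exactly as in the paragraph preceding the corollary in the paper. The only additions are the explicit verification that the glued map $\varphi$ is well-defined on $\eta$ and the pinning of the residual scaling ambiguity via the marked points $0,\infty$ — details the paper leaves implicit — so your write-up is a slightly more careful version of the same argument.
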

	Observe that the conformal welding in this corollary is not proven to be the unique conformal welding among \emph{all} possible conformal weldings; since it is assumed in Theorem \ref{thm2} that the curves $\varphi(\eta)$ and $\eta$ both have the law of SLE$_4$ curves, we only obtain uniqueness among the weldings for which the interface has this law. The uniqueness result can be strengthened to curves a.s.\ satisfying certain deterministic geometric properties by using the stronger variant of Theorem \ref{thm2} found in \cite[Theorem 2]{MMQ18}.
	
	\vspace{0.1cm}
	We also obtain a dynamic version of the critical conformal welding, analogous to Sheffield's quantum gravity zipper \cite[Theorem 1.8]{Sh16} in the case $\gamma\in(0,2)$. See Figure \ref{fig:zipper} for an illustration. 
	\begin{thm}
		\label{thm:criticalzipper}
		Let $(\HH,h_0,0,\infty)$ be the equivalence class representative of a $(2,1)$-quantum wedge with the last exit parametrization (see Definition \ref{def:lastexit}).\footnote{The theorem is still true if we let $(\HH,h_0,0,\infty)$ be some other equivalence class representative of a $(2,1)$-quantum wedge, provided the field $h_0$ is measurable with respect to the LQG surface, i.e., the equivalence class representative is chosen in a measurable way relative to the surface.} Let $\eta_0$ be an SLE$_4$ from $0$ to $\infty$ in $\HH$ which is independent of $h_0$.
		Then for every $t>0$ there exists a conformal map $f_t$ defined on $\HH$, 
		which is measurable with respect to $h_0$, such that: 
		\begin{compactitem}
			\item 
			\en{$(h_t,\eta_t)$ has the same law as $(h_0,\eta_0)$, where\footnote{It can be shown that $h_t\in \Hinv(\HH)$ is well-defined independently of its definition on $\eta_t\setminus f_t(\eta_0)$.} $h_t=h_0\circ f_t^{-1}+2\log|(f_t^{-1})'|$ and $\eta_t$ is the union of $f_t(\eta_0))$ and 
				$\HH\setminus f_t(\HH)$};
			\item if $(X(s))_{0\leq s \le t}$ and $(Y(s))_{0\leq s \le t}$ are such that $\nu_{h_0}([0,X(s)])=\nu_{h_0}([Y(s),0])=s$ for every $s\in [0,t]$, then $f_t$ maps $[0,X(t)]$ and $[Y(t),0]$ to the right- and left-hand sides of $\eta_t\setminus f_t(\eta_0)$, respectively, and for every $s\le t$, $X(s)$ and $Y(s)$ are mapped to the same point on $\eta_t\setminus f_t(\eta_0)$.
		\end{compactitem} 
		This gives rise to a bi-infinite process $(h_t,\eta_t)_{t\in \R}$, such that:
		\begin{compactitem}
			\item $(h_t,\eta_t)_{t\in \R}$ is measurable with respect to $(h_{t_0},\eta_{t_0})$ for any $t_0\in \R$; and
			\item $(h_t,\eta_t)_{t\in \R}$ is stationary, i.e., for any $t_0\in\R$ the two processes $(h_{t_0},\eta_{t_0})_{t\in \R}$ and $(h_{t_0+t},\eta_{t_0+t})_{t\in \R}$ are equal in law.
		\end{compactitem}
	\end{thm}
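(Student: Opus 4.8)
The whole theorem reduces to a single-time statement: for each fixed $t>0$ there is a conformal map $f_t\colon\HH\to\HH$, measurable with respect to $h_0$, which glues $[0,X(t)]$ to $[Y(t),0]$ along $\nu_{h_0}$-length (with $X(s)$ and $Y(s)$ sent to a common point for all $s\le t$) and is such that $(h_t,\eta_t)\eqD(h_0,\eta_0)$, where $h_t$ and $\eta_t$ are as in the statement. Granting this, the passage to a bi-infinite stationary process is soft. So the plan is to extract the single-time statement from Theorem~\ref{thm1} and Corollary~\ref{cor:wd} by a re-welding argument, and then to check the cocycle property, the bi-infinite extension, and stationarity.

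\textbf{The zip-up map and the single-time identity.} Decorate $\cS=(\HH,h_0,0,\infty)$ with an independent $\SLE_4$ $\eta_0$ and cut along it; by Theorem~\ref{thm1} we obtain independent $(2,2)$-quantum wedges $\cS^{\L}=(D^{\L},h_0|_{D^{\L}},0,\infty)$ and $\cS^{\Ro}$ whose quantum lengths along $\eta_0$ agree. I would then move the marked point of $\cS^{\L}$ from $0$ to $Y(t)$ (a quantum distance $t$ along $\partial D^{\L}$) and that of $\cS^{\Ro}$ from $0$ to $X(t)$. By the critical analogue of the invariance of $(2,2)$-quantum wedges under moving the marked point by a fixed quantum boundary length, the resulting marked surfaces $\widetilde{\cS}^{\L}$ and $\widetilde{\cS}^{\Ro}$ are again $(2,2)$-quantum wedges, and they are independent since each is a measurable function of $\cS^{\L}$, respectively $\cS^{\Ro}$. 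Welding $\widetilde{\cS}^{\L}$ and $\widetilde{\cS}^{\Ro}$ according to quantum length as in Corollary~\ref{cor:wd} produces a $(2,1)$-quantum wedge decorated by an independent $\SLE_4$ (this is Theorem~\ref{thm1} read in the welding direction, with Corollary~\ref{cor:wd} guaranteeing that the welding is well defined and the interface has the law of $\SLE_4$). On the other hand, cutting along $\eta_0$ and then re-gluing along $\eta_0$ is the identity, so the welded surface is precisely $(\HH,h_0)$ with $[0,X(t)]$ glued to $[Y(t),0]$ by $\nu_{h_0}$-length; I define $f_t$ to be the associated uniformizing map to $\HH$, normalized so that its image field $h_t$ carries the last-exit parametrization. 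Existence of $f_t$ is provided by this construction; uniqueness up to the remaining scaling is automatic from the Riemann mapping theorem, since the gluing takes place on the boundary and $f_t$ is therefore conformal on all of $\HH$; and the normalization removes the scaling ambiguity. Since the glued surface depends only on $h_0$, so does $f_t$. The displayed properties — $(h_t,\eta_t)\eqD(h_0,\eta_0)$, the identification of $h_t$ and $\eta_t$ as in the statement, and the action of $f_t$ on $[0,X(t)]$ and $[Y(t),0]$ with $X(s)$, $Y(s)$ identified for $s\le t$ — then all follow from the construction and the length-matching of the welding.

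\textbf{Cocycle, bi-infinite extension, stationarity.} Zipping up by $t$ and then by $s$ glues $[0,X(t+s)]$ to $[Y(t+s),0]$: since the welding maps preserve quantum length, the first $s$ units of boundary of $h_t$ past $0$ are the $f_t$-image of $[X(t),X(t+s)]$, and similarly on the other side. Hence $f_{t+s}=\widehat{f}_s\circ f_t$, where $\widehat{f}_s$ is the zip-up-by-$s$ map of the surface $h_t$, and is measurable with respect to $h_t$, hence with respect to $h_0$. This gives a measurable flow for $t\ge 0$. For $t<0$ I would use the inverse ``unfolding'' operation: zip-down-by-$|t|$ cuts $(\HH,h_0)$ along the initial arc of $\eta_0$ of quantum length $|t|$ (length measured via the two wedges bordering $\eta_0$) and opens it into the boundary, which is exactly the inverse of the gluing above. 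Iterating produces a bi-infinite family $(h_t,\eta_t)_{t\in\R}$ satisfying the cocycle property; it is measurable with respect to $(h_{t_0},\eta_{t_0})$ for every $t_0$ because the flow maps are invertible and measurable in both directions, and it is stationary because each individual zip preserves the joint law of $(h,\eta)$ and the process is generated by the flow.

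\textbf{Main obstacle.} The key input — and the place I expect the real work to lie — is the invariance used in the second paragraph: that a $(2,2)$-quantum wedge remains a $(2,2)$-quantum wedge when its marked point is moved by a fixed quantum boundary length, together with the bookkeeping needed to see that the re-welded field, once put into the last-exit parametrization, has \emph{exactly} the law of $h_0$ rather than merely approximately. In the subcritical regime this is part of the Duplantier--Miller--Sheffield wedge calculus; at $\gamma=2$ it must be established separately, either by a limiting argument as $\gamma\uparrow 2$ or by a direct analysis of the critical (dimension-zero, reflected) Bessel-type process governing the radial part of the field near the marked point — this is precisely where the choice of the last-exit parametrization in Definition~\ref{def:lastexit} earns its keep. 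A secondary, more technical point is that Corollary~\ref{cor:wd}, and hence the uniqueness statement Theorem~\ref{thm2}, is needed both to propagate independence of the two surfaces through the welding and to guarantee that the uniformizing map — and therefore $f_t$ — is canonically, hence measurably, determined by $h_0$ alone.
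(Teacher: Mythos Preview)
Your proposal is circular within this paper's logical architecture. You invoke Theorem~\ref{thm1} (cutting a $(2,1)$-wedge by $\SLE_4$ yields two independent $(2,2)$-wedges) and Corollary~\ref{cor:wd} as inputs, but look at the proof of Theorem~\ref{thm1} in Section~\ref{sec:mainproofs}: its first line is ``Consider the `critical zipper' $(h_t,\eta_t)_{t\in\R}$ of Theorem~\ref{thm:criticalzipper}''. The paper's dependency order is Proposition~\ref{prop:joint-conv} $\Rightarrow$ Theorem~\ref{thm:criticalzipper} $\Rightarrow$ Theorem~\ref{thm1} $\Rightarrow$ Corollary~\ref{cor:wd}, so you are assuming what you set out to prove. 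The secondary obstacle you flag --- invariance of the $(2,2)$-wedge under shifting the marked boundary point by a fixed quantum length --- is likewise unavailable here: it is not proved anywhere in the paper, and even in the subcritical case that invariance is typically obtained \emph{from} the zipper rather than as an input to it.

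The paper's actual argument is entirely different and never touches Theorem~\ref{thm1} or the $(2,2)$-wedge. It constructs the critical zipper as the $\gamma\uparrow 2$ limit of Sheffield's subcritical capacity zippers (Definition~\ref{def:cqz}): Section~\ref{sec:criticalzipperapprox} couples $(\gamma_n,\gamma_n-2/\gamma_n)$-wedges with their subcritical zipper flows and proves joint convergence of the fields, curves, boundary-length points $(X_n(q),Y_n(q))$, and reverse Loewner maps in the Carath\'eodory+ topology (Lemmas~\ref{lem::wedge_meas_convergence}--\ref{lem::wedge_boundarypoints_conv}, Lemma~\ref{claim::tightness}, Proposition~\ref{prop:joint-conv}). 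The limiting map $f$ inherits properties (a)--(e) from the subcritical zippers, and measurability of $f$ with respect to $(h_0,\eta_0)$ is then obtained by applying the MMQ uniqueness Theorem~\ref{thm2} directly to two conditionally independent copies of the limit --- not via Corollary~\ref{cor:wd}. Remark~\ref{rmk:stationary_coupling} packages this as the $t=1$ case of the theorem, and the proof in Section~\ref{sec:mainproofs} simply notes that replacing $1$ by any $t>0$ changes nothing.
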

	
	As described in \cite{Sh16} we can think of the operation $(h_0,\eta_0)\mapsto (h_t,\eta_t)$ for $t>0$ as \emph{zipping up} the surfaces $h_0|_{D^\L},h_0|_{D^\Ro}$ to the left and right of $\eta_0$ by $t$ units of quantum boundary length. Similarly, we think of the operation $(h_0,\eta_0)\mapsto (h_t,\eta_t)$ for $t<0$ as \emph{zipping down}.

	\subsection{Related works}
	Conformal weldings related to LQG were first studied in \cite{ajks10,ajks11}, where it was proven that the conformal welding of a subcritical LQG surface to a Euclidean disk according to boundary length is a.s.\ well-defined (see \cite{tecu} for the case of critical LQG). In Sheffield's breakthrough work \cite{Sh16} it is shown that the conformal welding of \emph{two} subcritical LQG surfaces is a.s.\ well-defined, and that the interface is given by an SLE$_\kappa$ curve. More precisely, the following is proved.
	\begin{thm}[Sheffield '16]
		Consider two $(\gamma,\gamma)$-quantum wedges 
		$\cS^{\op{L}}=(\HH,h^{\op{L}},0,\infty)$ and 
		$\cS^{\op{R}}=(\HH,h^{\op{R}},0,\infty)$, with $\gamma\in (0,2)$, and identify the boundary arc 
		$[0,\infty)$ of $\cS^{\op{L}}$ to the boundary arc $(-\infty,0]$ of $\cS^{\op{R}}$ 
		according to $\gamma$-LQG boundary length. This a.s.\ gives a uniquely defined conformal welding of the two $\gamma$-LQG surfaces. In this conformal welding, the interface $\eta$ between the surfaces has the law of a chordal SLE$_{\gamma^2}$, and the combined surface\footnote{\label{footnote:offcurve}That is, the surface parametrised by $(\HH,h,0,\infty)$ where $h$ is set equal to the image (after welding) of $h^{\op{L}}$ on the left of $\eta$ and of $h^{\op{R}}$ on the right of $\eta$. The field $h$ is a well-defined element of $\Hinv(\HH)$ regardless of how $h$ is defined on $\eta$ itself.} has the law of a $(\gamma,\gamma-2/\gamma)$-quantum wedge \en{that is independent of $\eta$.}
		\label{thm3}
	\end{thm}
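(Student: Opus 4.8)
The plan is to prove the theorem ``in reverse''. Welding two wedges head-on is awkward, since one does not know a priori that a welding exists; instead one starts from a $(\gamma,\gamma-2/\gamma)$-quantum wedge $\cS=(\HH,h,0,\infty)$ decorated by an independent $\SLE_{\gamma^2}$ curve $\eta$, shows that \emph{unzipping} along $\eta$ opens $\cS$ up into two independent $(\gamma,\gamma)$-quantum wedges whose quantum boundary lengths along $\eta$ agree, and then reads off the welding statement by running the unzipping backwards. Uniqueness of the welding is a logically separate point, obtained from conformal removability of the interface.

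First I would set up the \emph{capacity zipper}. Let $(g_t)$ be the chordal Loewner maps of $\eta$, with driving function $W_t=\sqrt{\kappa}\,B_t$ and $\kappa=\gamma^2$, and put $\hat g_t=g_t-W_t$, so $\hat g_t:\HH\setminus\eta([0,t])\to\HH$ sends the tip $\eta(t)$ to $0$. Define on $\HH$ the field $h_t=h\circ\hat g_t^{-1}+Q\log|(\hat g_t^{-1})'|$ with $Q=2/\gamma+\gamma/2$; by the coordinate-change rule of Definition~\ref{def::lqg}, $(\HH,h_t,0,\infty)$ is a parametrization of the quantum surface obtained from $\cS$ by removing the slit $\eta([0,t])$. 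Under $\hat g_t^{-1}$ the two sides of the slit correspond to two boundary intervals of $\HH$ emanating from $0$, one in $(-\infty,0)$ and one in $(0,\infty)$, and the quantum length measure is transported along. Two things must then be established. \textbf{(Quantum length conservation.)} These two intervals receive equal $\nu_{h_t}$-mass, which is a continuous strictly increasing function of $t$; hence the capacity parameter $t$ may be reparametrized by quantum length. \textbf{(Stationarity of the zipper.)} Viewed as a decorated quantum surface, the unzipped configuration consisting of $(\HH,h_t,0,\infty)$ together with the curve $\hat g_t(\eta|_{[t,\infty)})$ has, for every $t$, the same law as the starting pair $(\cS,\eta)$ --- in particular an independent wedge--SLE pair. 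Granting these, one unzips completely: in the limit the restrictions of $h$ to the two sides of $\eta$, each conformally mapped onto $\HH$, are exhibited as quantum wedges, and carrying the coordinate-change formula to the limit identifies them as $(\gamma,\gamma)$-wedges, while the stationarity statement gives that they are mutually independent and were independent of $\eta$. Reversing the construction --- ``zipping up'' --- then welds two independent $(\gamma,\gamma)$-wedges, along matching quantum length, into a $(\gamma,\gamma-2/\gamma)$-wedge carrying an independent $\SLE_{\gamma^2}$, which is the assertion. The value $\kappa=\gamma^2$ is forced at the stationarity step.

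The heart of the argument, and the step I expect to be the main obstacle, is the stationarity of the zipper; its proof is a stochastic-calculus computation. Tested against a fixed smooth function, $t\mapsto h_t$ is a semimartingale, because $h$ has a Gaussian/Markovian structure and $W$ is a Brownian motion; one computes its drift, arising from the deterministic Loewner flow acting through the term $Q\log|(\hat g_t^{-1})'|$ together with the It\^o correction from the $dW_t$ noise, and checks that the net effect is a process whose law (as a decorated surface) is invariant under the flow. The cancellation that makes this work happens precisely when $\kappa=\gamma^2$ and $Q=2/\gamma+\gamma/2$; it is the quantum-gravity analogue of the classical martingale-observable computation that singles out $\SLE_\kappa$. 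The difficulty is that everything takes place on a surface of infinite quantum boundary length whose field carries a logarithmic singularity, and whose zipper develops a moving logarithmic-type singularity at the tip $\eta(t)$: the semimartingale computation, the control of $\nu_{h_t}$ near the tip that underlies length conservation and the reparametrization of time, and the passage to the fully-unzipped limit all call for careful truncation and limiting arguments, and the scheme succeeds only if these are pushed through.

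Finally, uniqueness. Once it is known that one welding of the two wedges has interface distributed as $\SLE_\kappa$ with $\kappa=\gamma^2<4$, one uses that such a curve is a.s.\ conformally removable: by Rohde--Schramm each complementary component of $\eta$ is a H\"older domain, and by the Jones--Smirnov criterion boundaries of H\"older domains are conformally removable. Hence any homeomorphism of $\HH$ that is conformal off $\eta$ is conformal on all of $\HH$, and so a M\"obius transformation; applying this to the homeomorphism that compares any two solutions of the welding problem shows that they agree up to a M\"obius transformation. It is exactly this removability input that is unavailable by soft arguments when $\gamma=2$, which is why the separate Theorem~\ref{thm2} is needed in the critical case treated here.
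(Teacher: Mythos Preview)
Your overall architecture matches what the paper reports: this theorem is quoted from Sheffield and not reproved here, and the paper's two-sentence summary (a GFF/reverse-$\SLE_\kappa$ coupling invariant under zipping, plus Jones--Smirnov and Rohde--Schramm for uniqueness) is exactly your zipper-plus-removability outline.

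There is, however, one genuine gap in your sketch. You write that after fully unzipping, ``carrying the coordinate-change formula to the limit identifies [the two halves] as $(\gamma,\gamma)$-wedges.'' The coordinate-change rule alone does not do this: it only tells you that the two halves are well-defined quantum surfaces, not what their law is. In Sheffield's argument the identification comes from a separate \emph{zoom-in} input (his Proposition~1.6, whose critical analogue is Proposition~\ref{prop:zoom} of the present paper): near a point sampled from the $\gamma$-LQG boundary measure, the surface converges to a $(\gamma,\gamma)$-wedge. One then combines this with the zipper stationarity \emph{at quantum-length times} (not at all capacity times --- your stationarity claim ``for every $t$'' is only correct once $t$ is quantum length) to see that the two half-surfaces, re-centred at the zipped-up boundary point, have the $(\gamma,\gamma)$-wedge law and are independent. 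This is precisely the mechanism used in Section~\ref{sec:mainproofs} of the present paper in the critical case (Proposition~\ref{prop:zoom-twopoints} plus the stationary zipper), and it is the piece your outline is missing. Your uniqueness paragraph via H\"older domains and conformal removability is correct and matches the paper exactly.
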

	The existence part of Theorem \ref{thm3} is established by studying a certain coupling between a GFF and a reverse SLE$_\kappa$, where the law of the GFF is invariant under zipping up and down the SLE$_\kappa$. 
	The uniqueness part follows from \cite{JS00}, where Jones and Smirnov proved that the boundaries of H\"older domains are conformally removable, and \cite{RS05}, where Rohde and Schramm proved that the complement of an SLE$_\kappa$ for $\kappa\in(0,4)$ is a.s.\ a H\"older domain. For an overview of the proof, we recommend the notes \cite{Ber16}. 
	
	\begin{remark0}
		\label{rmk:sczipper}
		The analogue of Theorem \ref{thm:criticalzipper} is also proved in \cite[Theorem 1.8]{Sh16} in the case $\gamma\in(0,2)$. \en{That is, starting with the curve and combined surface described at the end of Theorem \ref{thm3} (let us call them $(h_0,\eta_0)$) we get a bi-infinite stationary process $(h_t,\eta_t)_{t\in \R}$ that is measurable with respect to $(h_0,\eta_0)$.} 
	\end{remark0}
	
	Duplantier, Miller, and Sheffield \cite{DMS18+} have also studied problems closely related to conformal welding. In particular, they proved that if one considers an SLE$_{\kappa}$ $\eta$ on an independent $\gamma$-LQG surface $\cS$, where $\kappa\gamma^2=16$, then $\eta$ is measurable with respect to a pair of so-called \emph{forested wedges}. These wedges are the restrictions of $\cS$ to the components of the complement of $\eta$ -- one consisting of components traced anti-clockwise by $\eta$, and the other consisting of components traced clockwise 
	-- along with topological information (encoded by a pair of L\'evy processes) about how these components are glued together.
	A number of other measurability results concerning welding of general LQG surfaces are established in the same paper. We note however that these measurability results are of a weaker kind than, for example, the result in \cite{Sh16}. For instance, uniqueness of the ``gluing'' of forested wedges described above is only proved under the assumption that the resulting field $h$ and curve $\eta$ have a particular joint law.
	
	As already mentioned, McEnteggart, Miller, and Qian in the recent paper \cite{MMQ18}, have also proved uniqueness of conformal weldings in certain settings. More precisely, they prove that if $\eta$ is a curve in $\HH$ and $\psi:\HH\to\HH$ is a homeomorphism which is conformal on $\HH\setminus\eta$, then $\psi$ is in fact conformal as soon as $\eta$ and $\psi(\eta)$ satisfy certain geometric regularity conditions. These conditions are in particular satisfied a.s.\ if $\eta$ and $\psi(\eta)$ both have the law of an SLE$_\kappa$ for $\kappa\in(0,8)$. Their result is new for $\kappa\in[4,8)$, while it follows from conformal removability for $\kappa\in(0,4)$.

	\subsection{Outline}
	The rest of the article is structured as follows.
	We begin in Section \ref{sec:prelims} by collecting relevant definitions: of the Gaussian free field and its variants; LQG surfaces and their parametrisations; and the specific quantum surfaces known as quantum wedges that will be particularly important in this paper. Here we also describe the construction of \emph{boundary} LQG measures, and discuss some properties of these measures that are needed in what follows. In particular we will make use of a connection between subcritical and critical measures, that is a consequence of \cite{APS18two}. 
	We conclude the preliminaries by briefly introducing Schramm--Loewner evolutions, and proving some basic convergence results that will be useful later on.
	
	Sections \ref{sec:zoom} and \ref{sec:criticalzipperapprox} provide the key ingredients (Propositions \ref{prop:zoom} and \ref{prop:joint-conv}, respectively) for the proofs of Theorems \ref{thm1} and \ref{thm:criticalzipper}. In Section \ref{sec:zoom} it is shown that if one observes a $2$-LQG surface in a small neighbourhood of a critical LQG-measure typical boundary point, then it closely resembles a $(2,2)$-quantum wedge. This gives the critical LQG analogue of \cite[Proposition 1.6]{Sh16}, justifies why the $(2,2)$-quantum wedge is a natural quantum surface (to our knowledge this is the first time that this surface is defined in the literature), and is important to identify the laws and establish independence of the quantum surfaces $\cS^{\L}$ and $\cS^{\Ro}$ in the proof of Theorem \ref{thm1}.

	In Section \ref{sec:criticalzipperapprox} we prove that Sheffield's subcritical quantum gravity zipper (defined for $\gamma\in(0,2)$), has a limit in a strong sense as $\gamma\uparrow 2$. This is shown by proving and combining various convergence results concerning reverse $\SLE_{\kappa=\gamma^2}$ and $\gamma$-LQG measures as $\gamma\uparrow 2$. The proof requires a careful study of quantum wedges and their associated measures in a neighbourhood of the origin, and analysis of the Loewner equation for points on the real line. 
	As a consequence of this section, we obtain Theorem \ref{thm:criticalzipper}.
	Finally, in Section \ref{sec:mainproofs} we show how the main results of the previous sections allow us to deduce Theorem \ref{thm1}. 
	
	It is also worth taking a moment now to discuss why the proof in \cite{Sh16} does not generalise straightforwardly to the critical case. At a very high level, the key difficulties are: (a) 
	\en{lack of first moments for critical LQG-measures}; 
	and (b) non-Gaussian conditioning for the law of the field around ``quantum typical points''. To explain this in more detail, 
	we first need to describe the general strategy of \cite{Sh16} (for a more complete overview, the reader should consult \cite{Sh16} or \cite{Ber16}). As in the present paper, the fundamental object to construct is the \emph{quantum gravity zipper}: a dynamic coupling between a $(\gamma,\gamma-2/\gamma)$-quantum wedge and an $\SLE_{\kappa=\gamma^2}$ analogous to the coupling described in Theorem \ref{thm:criticalzipper}. From this, the analogue of Theorem \ref{thm1} follows fairly easily.
	
	In order to construct the subcritical quantum gravity zipper, Sheffield first describes a different dynamic coupling, this time between an $\SLE_{\kappa}$ and a Neumann GFF plus a log singularity, that he calls the ``capacity zipper''. The existence  of this coupling is straightforward to prove using a martingale argument. From here, roughly speaking, the ``quantum zipper'' can be obtained by ``zooming in'' at the origin of the capacity zipper. One key tool that is made use of (see, for example, \cite[Proposition 1.6]{Sh16}) is a nice description of the field plus a $\gamma$-quantum typical point, \emph{when the field is weighted by $\gamma$-LQG boundary length}. The difficulty with this in the critical case is that, in contrast to the subcritical setting, critical LQG measures assign mass with infinite expectation to finite intervals. Although this issue is actually possible to circumvent for many purposes -- we will do exactly this using a truncation argument in Section \ref{sec:zoom} -- it causes significant problems if we want to say anything precise about the joint law of the curve and the surface in the critical analogue of the capacity zipper, at a time when a critical quantum typical point is ``zipped up'' to the origin. An additional technical difficulty is created by the fact that critical measures need to be defined using a different approximation procedure to subcritical measures (see Section \ref{sec:gmc}). This means that the law of the field around a quantum-typical point is no longer described in terms of its original law via a simple Girsanov shift, and makes it  difficult to describe how the law of the curve changes in the context mentioned above. For example, it is unclear if it will simply add a drift to the reverse SLE driving function, as is the case when $\gamma\in(0,2)$.
	
	Although it may be possible to obtain the results of this paper by adapting the method of \cite{Sh16} in some way, for the sake of avoiding significant additional technicalities we have chosen the approximation approach.
	
	\vspace{0.1cm}
	{\bf Acknowledgements} N.H.\ acknowledges support from Dr.\ Max R\"ossler, the Walter Haefner Foundation, and the ETH Z\"urich Foundation. E.P.\ is supported by the SNF grant $\#$175505. Both authors would like to express their thanks to Juhan Aru, for his valuable input towards the initiation and strategy of this project, and for numerous helpful discussions. \en{They also thank an anonymous referee for his or her careful reading of the paper and for helpful comments.}
	
	\section{Preliminaries}\label{sec:prelims}
	
	\subsection{Gaussian free field}
	Let $D \subset \BB C$ be a domain with harmonically non-trivial boundary, i.e., such that a Brownian motion started at some point in $D$ hits $\partial D$ a.s. \en{Let $C_0^\infty(D)$ denote the space of infinitely differentiable functions on $D$ with compact support.} For $f,g\in C_0^\infty(D)$ 
	define the Dirichlet inner product of $f$ and $g$ by
	\eqbn
	\langle f,g\rangle_\nabla=\int_D \nabla f\cdot \nabla g\,dxdy.
	\eqen
	Let $H_0(D)$ denote the Hilbert space closure (with respect to this inner product) of the subspace of functions $f \in C^\infty_0(D)$ with
	$\|f\|_\nabla:=\langle f,f\rangle_\nabla<\infty$.\footnote{Note that $H_0(D)$ is the Sobolev space which is often denoted by $H^1_0(D)$ or $W^{1,2}_0(D)$ in the literature. Similarly, the space $H(D)$ defined below is the Sobolev space $H^1(D)=W^{1,2}(D)$.} Let $f_1,f_2,\dots$ be a $\langle\cdot,\cdot\rangle_\nabla$-orthonormal basis for $H_0(D)$. The \emph{zero boundary} Gaussian free field (GFF) $h$ is then defined by setting
	\eqb
	h = \sum_{j=1}^{\infty}\alpha_j f_j,\qquad \text{ where } 
	\alpha_1,\alpha_2,\dots \sim \cN(0,1)\text{ are independent}.
	\label{eq:gff}
	\eqe
	The convergence of \eqref{eq:gff} does not hold in $H_0(D)$ itself, \en{but rather in a space of generalised functions. More precisely,  let $H^{-1}(D)$ be the dual space of $H_0(D)$, equipped with the norm} 
	\en{	\eqb\|k\|_{H^{-1}(D)}=\sup_{g\in H_0(D): \|g\|_{\nabla}=1} ( k, g ) \eqe  where 
		we use the notation $(k, \cdot )$ for the action of $k\in H^{-1}(D)$ on $H^1(D)$}. Note that an element of $H^{-1}(D)$ defines a distribution on $D$ with action $f\mapsto (k,f)$, since $C_0^\infty(D)\subset H^1(D)$. 
	We further define the space $\Hloc(D)$ to be the subspace of generalised functions $h$ on $D$ such that for any open set $U$ with $\bar{U}\Subset D$, the restriction of $h$ to $U$ is in the space $H^{-1}(U)$. We say that $h_n\to h$ in $\Hloc(D)$ if and only if $h_n|_U\to h|_U$ in $H^{-1}(D)$ for any such $U$.
	
	\en{Then the series \eqref{eq:gff} converges a.s.\ in $\Hloc(D)$ and the Gaussian free field $h$ is defined as an element of this space a.s. In particular, $h$ is a.s.\ a random distribution; as above, we write $(h,f)$ for the action of $h$ on $f\in C_0^\infty(D)$. We note that when $D$ is bounded, the series actually converges a.s.\ in $H^{-1}(D)$  and so $h$ is a.s.\ an element of this space. }
	
	\en{	Finally, we mention that for $f\in C_0^\infty(D)$, \eqb\mathrm{var}((h,f))=\langle\Delta^{-1}f,\Delta^{-1} f\rangle_\nabla=(f,\Delta^{-1}f)\eqe
		and so $(h,f)$ actually makes sense (as an a.s.\ limit) for any $f$  such that $(f-f_n,\Delta^{-1}(f-f_n))\to 0$ for some sequence $f_n\in C_0^\infty(D)$. When is $D$ is bounded, for instance, this is exactly the set of functions $f$ in $H^{-1}(D)$.}
	
	For any given bounded and measurable $\rho:\partial D\to\RR$ the GFF with \emph{Dirichlet boundary condition $\rho$} is defined to be a random distribution with the law of $h+\ol\rho$, where $\ol\rho$ is the harmonic extension of $\rho$ to the interior of $D$. 
	
	To define a \emph{mixed boundary condition} GFF, assume that $\partial D$ is divided into two boundary arcs $\partial_{\op{D}}$ and $\partial_{\op{F}}$, and that a function $\rho:\partial D\to\R$  satisfying $\rho|_{\partial_{\op{F}}}=0$ is given. Write $\ol\rho$ for the harmonic extension of $\rho$ to $D$ and let $H_{\partial_{\op{D}},\partial_{\op{F}}}(D)$ be the Hilbert space closure of the subspace of functions $f \in C^\infty(D)$ with
	$\|f\|_\nabla<\infty$ and $f|_{\partial_{\op{D}}}=0$. 
	The \emph{mixed boundary GFF with Dirichlet boundary data $\rho$ on $\partial_{\op{D}}$}, is then defined to be a random distribution with the law of $h+\ol\rho$, where $h$ is now defined by \eqref{eq:gff} with $f_1,f_2,\dots$ an orthonormal basis for $H_{\partial_{\op{D}},\partial_{\op{F}}}(D)$.
	
	To define the \emph{free boundary} GFF (equivalently, the \emph{Neumann} GFF), consider the subspace of functions $f \in C^\infty(D)$ with $\|f\|_\nabla<\infty$. Notice that $\langle\cdot, \cdot\rangle_\nabla$ is degenerate on this subspace of functions, in the sense that $\langle f_C,g\rangle_\nabla=0$ for any $g$ if $f_C\equiv C\in \R$. However, $\langle\cdot, \cdot\rangle_\nabla$ defines a positive definite inner product as soon as we quotient the space by identifying functions that differ by an additive constant. Write $H(D)$ for the Hilbert space closure of this quotient space with respect to the inner product $\langle \cdot, \cdot \rangle_\nabla$. The free boundary GFF $h$ is then defined by \eqref{eq:gff}, where $f_1,f_2,\dots$ is now an orthonormal basis for $H(D)$. Again the convergence of the defining sum does not take place in $H(D)$ itself, \en{but in the quotient space of $\Hloc(D)$ under the equivalence relation that identifies elements differing by an additive constant.} 
	We therefore define the free boundary GFF as an element of \en{$\Hloc(D)$, \emph{modulo an additive constant}}, i.e., $h$ and $h+C$ are identified for any $C\in\R$. One may fix the additive constant in various ways, for example by requiring that the average of $h$ over some fixed set is 0.
	
	When $D=\HH$, by \cite[Lemma 4.2]{DMS18+}, $H(\HH)=H_1(\HH) \oplus H_2(\HH)$ is an orthogonal 
	decomposition of $H(\HH)$, where $H_1(\HH)$ is the subspace of functions $f\in H(\HH)$ that are radially symmetric about the origin (considered modulo an additive constant), and $H_2(\HH)$ is the subspace of functions $f\in H(\HH)$ that have average zero on all semi-circles centred at the origin.
	This induces a decomposition of $\Hloc(\HH)$: any $h\in \Hloc(\HH)$ can be uniquely written as $h=h_\rad+h_\cir$, where $\langle h_\rad,f\rangle_\nabla=0$ for any $f\in H_2(\HH)$ and $\langle h_\cir,f\rangle_\nabla=0$ for any $f\in H_1(\HH)$. In the following we will often make a slight abuse of notation and talk about the ``projection" of an element of $\Hloc(\HH)$ onto $H_1(\HH)$ or $H_2(\HH)$: by this we mean the corresponding projection in $\Hloc(\HH)$.
	
	\begin{remark0}
		To reiterate; in the spaces $\{H^{-1}(D), \Hloc(D), H_{\partial_{\op{D}},\partial_{\op{F}}}(D), H_0(D), H_2(\HH)\}$ functions that differ by an additive constant are \emph{not} identified, while in $\{H(D), H_1(\HH)\}$ they \emph{are} identified.
	\end{remark0}
	
	Finally, we mention that if $f\in H(D)$ and $h$ is a Neumann GFF in $D$, then the law of $h+f$ is absolutely continuous with respect to the law of $h$. Indeed by standard theory of Gaussian processes, the Radon--Nikodym derivative of the former with respect to the latter is proportional to $\e^{\langle h, f \rangle_\nabla}$, where $\langle h, f \rangle_\nabla:=\lim_{n\rta\infty}\langle \sum_{j=1}^n \alpha_j f_j, f \rangle_\nabla$. 
	
	\subsection{Quantum wedges} \label{sec::lqgandwedges}
	Recall the definition of a $\gamma$-LQG surface from the introduction (Definition \ref{def::lqg}).
	
	\emph{Quantum wedges} are a particular family of doubly-marked LQG surfaces which were originally introduced in \cite{Sh16} (see also \cite{DMS18+}). We will parametrise these surfaces by $(\HH,h,0,\infty)$ throughout most of the paper, but also sometimes by the strip $\strip=\RR\times[0,\pi]$ with marked points at $\pm\infty$. \en{These will be related by the} \en{conformal map $\phi:\strip\to\HH$ defined by 
		\eqb
		\phi(z)=\exp(-z),
		\label{eqn:striptoH}
		\eqe
		which sends $\infty$ (resp., $-\infty$) to 0 (resp.,  $\infty$).} When we discuss quantum wedges, there will be two parameters of interest. The first parameter $\gamma$ specifies how we are defining equivalence classes of quantum surfaces (i.e., it plays the role of the parameter $\gamma$ in Definition \ref{def::lqg}) 
	and the second parameter $\alpha$ specifies the weight of a logarithmic singularity that we are placing at the origin. We refer to the surface as a $(\gamma,\alpha)$-quantum wedge. In this paper  we will actually only consider $(\gamma,\gamma-2/\gamma)$-quantum wedges and $(\gamma,\gamma)$-quantum wedges for $\gamma\in (0,2]$. The case $\gamma=2$ has not been considered in earlier papers, but the definition from \cite{Sh16,DMS18+} extends in a natural way to this case. Before we state the formal definition of the $(\gamma,\alpha)$-quantum wedge we need to introduce some notation.

	Since a doubly-marked quantum surface actually refers to an \emph{equivalence class}, and since for any $a>0$ the map $z\mapsto az$ defines a conformal map from $(\HH,0,\infty)$ to $(\HH,0,\infty)$, there are several different fields $h$ that describe the same quantum surface $(\HH,h,0,\infty)$. It is therefore convenient to decide on a canonical way to choose $h$ from the set of possible fields, or a ``canonical parametrisation".  We will consider the \emph{last exit parametrisation} in most of this paper, since this parametrisation leads to the cleanest definition of
	$(2,2)$-quantum wedges. 
	Note that this is different from the \emph{unit circle parametrisation} considered in \cite{DMS18+}.
	\begin{defn}
		The last exit (resp., unit circle) parametrisation of a doubly-marked $\gamma$-quantum surface $\cS$ with the topology of $\HH$, is defined to be the representative $(\HH, h, 0,\infty)$ of $\cS$ such that if $h_{\rad}(r)$ is the average of $h$ on the semi-circle of radius $r$ around $0$ (i.e., $h_{\rad}$ is the projection of $h$ onto $H_1(\HH)$), then $s\mapsto h_{\rad}(e^{-s})-Q_\gamma s$ hits $0$ for the \emph{last} (resp., \emph{first}) time at $s=0$.
		\label{def:lastexit}
	\end{defn}
	If the last exit parametrisation of a surface exists 
	(i.e., if $h_\rad(r)+Q_\gamma \log r\neq 0$ for all $r>0$ small enough) it can easily be seen to be unique, by mapping the surface to the strip $\strip$ \en{with the map $\phi$ from \eqref{eqn:striptoH}}. Let $h_{\cir}=h-h_{\rad}$ be the projection of $h$ onto $H_2(\HH)$, and write $h_{\cir}^{\GFF}$ for the law of this field when $h$ is a Neumann GFF on $\HH$. Observe that this describes the law of a well-defined element of $\Hloc(\HH)$ (i.e., not only an element up to an additive constant).
	\begin{defn}
		Let $\gamma\in (0,2]$ and $\alpha<Q_\gamma$.  Then 
		the $(\gamma,\alpha)$-quantum wedge is the doubly-marked $\gamma$-quantum surface whose last exit parametrisation $(\HH,h,0,\infty)$ can be described as follows: 
		\begin{compactitem}
			\item $(h_{\rad}(e^{-s})))_{s\ge 0}$ has the law of $(B_{2s}+\alpha s)_{s\ge 0}$ conditioned to stay below $(Q_\gamma s)_{s\ge 0}$ for all time, where $B$ is a standard Brownian motion with $B_0=0$.
			\item $(h_{\rad}(e^{-s}))_{s\le 0}$ has the law of $(\wh{B}_{-2s}+\alpha s)_{s\le 0}$, where $\wh{B}$ is a standard Brownian motion with $\wh{B}_0=0$.
			\item $h_{\cir}$ is equal in law to $h^{\GFF}_{\cir}$.
			\item $h_{\cir}$, $(h_{\rad}(e^{-s}))_{s\ge 0}$, and $(h_{\rad}(e^{-s}))_{s<0}$ are independent. 
		\end{compactitem}
		\label{def:wedge}
	\end{defn}
	
	\begin{remark0}
		\label{rmk:dmswedge}
		In \cite{Sh16,DMS18+} the $(\gamma,\alpha)$-quantum wedge is defined to be the $\gamma$-quantum surface whose unit circle parametrisation is given by $(\HH,h,0,\infty)$, where:  $h=h_\cir+h_\rad$; $h_\cir$ is as in Definition \ref{def:wedge}; $h_\cir$ and $h_\rad$ are independent; and  $h_\rad(\e^{-s})$ is equal to $B_{2s}+\alpha s$ for $s\ge 0$, and to $\wh{B}_{-2s}+\alpha s$ conditioned to stay above $s\mapsto Q_\gamma s$ for $s<0$. 
		
		We show in Lemma \ref{prop:strip} below that this definition is equivalent to Definition \ref{def:wedge}
	\end{remark0} 
	
	In Definition \ref{def:wedge} we require $\alpha$ to be strictly smaller than $Q_\gamma$, and one can check that this is satisfied for $\alpha=\gamma$ when $\gamma\in(0,2)$. However, we are also interested in the case $\gamma=2$, where we have $Q_\gamma=2=\gamma$. Thus, we need to give a definition of the following surface, which arises as a limit of a $(\gamma,\gamma)$-quantum wedge when $\gamma\uparrow 2$.\footnote{More precisely, if $h$ is the field of a $(2,2)$-quantum wedge in the last exit parametrisation, and for $\gamma\in(0,2)$, $h_\gamma$ is the field of a $(\gamma,\gamma)$-quantum wedge in the last exit parametrisation, then $h_\gamma \to h$ in law  as $\gamma\uparrow 2$. To see this, it is easiest to map the surfaces to the strip $\strip$ with marked points at $\pm\infty$.} 
	\begin{defn}
		We define the $(2,2)$-quantum wedge to be the doubly-marked $2$-quantum surface whose last exit parametrisation $(\HH,h,0,\infty)$ can be described as follows: 
		\begin{compactitem}
			\item $(h_{\rad}(e^{-s}))_{s\ge 0}$ has the law of $(-\Bessel_{2s}+2s)_{s\ge 0}$, where $\Bessel$ is a 3-dimensional Bessel process started from $0$.
			\item $(h_{\rad}(e^{-s}))_{s\le 0}$ has the law of $(\wh{B}_{-2s}+2s)_{s\ge 0}$, where $\wh{B}$ is a Brownian motion started from $0$.
			\item $h_{\cir}$ is equal in law to $h^{\GFF}_{\cir}$.
			\item $h_{\cir}$, $(h_{\rad}(e^{-s}))_{s\ge 0}$, and $(h_{\rad}(e^{-s}))_{s<0})$ are independent. 
		\end{compactitem}
		\label{def:critical-wedge}
	\end{defn}
	
	The $(\gamma,\gamma)$-quantum wedges are of particular interest since they may be obtained by sampling a point from the boundary $\gamma$-LQG measure and then ``zooming in'' near this point. This was established in \cite{Sh16} for $\gamma\in(0,2)$, and Proposition \ref{prop:zoom} below is a variant of this result for $\gamma=2$.
	
	\begin{remark0}
		The last exit parametrization is more convenient than the unit circle parametrization for the $(2,2)$-quantum wedge since with the unit circle parametrization any neighborhood of zero has infinite mass a.s. This can be seen by using that with the unit circle parametrization, the field $(h_{\rad}(e^{-s}))_{s\ge 0}$ has the law of $(B_{2s}+2s)_{s\geq 0}$ for $B$ a standard Brownian motion started from 0.
	\end{remark0}
	
	In some of our proofs it will be convenient to parametrise the quantum wedges by the strip $\strip$ instead of the upper half-plane $\HH$. Recall that $H(\strip)$ denotes the Hilbert space closure of the subspace of functions $f \in C^\infty(\strip)$ with
	$\|f\|_\nabla:=(f,f)_\nabla<\infty$, 
	defined modulo additive constant. By
	\cite[Lemma 4.2]{DMS18+},
	$H(\strip)=H_1(\strip) \oplus H_2(\strip)$
	is an orthogonal 
	decomposition of $H(\strip)$, where $H_1(\strip)$ is the subspace of functions $f\in H(\strip)$ 
	that are constant on all line segments $\{x\}\times [0,\pi]$ for $x\in\RR$ (considered modulo an additive constant), and 
	$H_2(\strip)$ is the subspace of functions $f\in H(\strip)$ that have mean zero on all such line segments. Let $h^{\GFF,\strip}_{\cir}$ denote a field with the law of a Neumann GFF on $\strip$ projected onto $H_2(\strip)$ (as in the case of $\HH$, this is a well-defined element of $\Hloc(\strip)$). The strip is convenient to work with since the term $Q_\gamma \log|\phi'|$ in the coordinate change formula \eqref{eqn:coc} is equal to zero for conformal transformations of the kind $z\mapsto z+a$ for $a\in\RR$ (these are precisely the conformal maps from $\strip$ to itself that map $+\infty$ to $+\infty$ and $-\infty$ to $-\infty$, and correspond after conformal mapping to dilations of $\HH$). Furthermore, as the following remark illustrates for the case of the $(2,2)$-quantum wedge, the quantum wedges defined above have a somewhat nicer description when parametrised by the strip.
	\begin{remark0}\label{rmk::stripwedge}
		The surface $(\strip,h,\infty,-\infty)$ with $h\eqD h^{\GFF,\strip}_{\cir}+h_{\rad}$ has the law of a $(2,2)$-quantum wedge, if $h_{\rad}\in H_1(\strip)$ (viewed as a distribution modulo an additive constant) and the following hold:
		\begin{compactitem}
			\item $(h_{\rad}(s))_{s\ge 0}$ has the law of $-\Bessel_{2s}$ where $\Bessel$ is a 3-dimensional Bessel process starting from $0$. 
			\item $(h_{\rad}(s))_{s\le 0}$ has the law of ${B}_{-2s}$, where ${B}$ is a standard Brownian motion starting from $0$.
			\item $h^{\GFF,\strip}_{\cir}$, $(h_{\rad})_{s\ge 0}$, and $(h_{\rad}(s))_{s\le 0})$ are independent. 
		\end{compactitem}
		\label{rmk:wedge-strip}
	\end{remark0}
	
	\begin{lemma}
		For $\gamma\in(0,2)$ the definition of a $(\gamma,\alpha)$-quantum wedge in Definition \ref{def:wedge} is equivalent to the definition given in \cite[Definition 4.5]{DMS18+} (see Remark \ref{rmk:dmswedge}).
		\label{prop:strip}
	\end{lemma}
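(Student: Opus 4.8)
The plan is to show that the two definitions of a $(\gamma,\alpha)$-quantum wedge agree as \emph{equivalence classes} of quantum surfaces, i.e., that the field in the last exit parametrisation (Definition \ref{def:wedge}) and the field in the unit circle parametrisation (Remark \ref{rmk:dmswedge}, following \cite[Definition 4.5]{DMS18+}) describe the same random surface. Since each parametrisation is a canonical representative of an equivalence class, it suffices to exhibit a random conformal automorphism of $(\HH,0,\infty)$ — necessarily of the form $z\mapsto az$ for some random $a>0$ measurable with respect to the surface — carrying one field to the other via the change of coordinates formula \eqref{eqn:coc}. On the strip $\strip$, under the map $\phi$ of \eqref{eqn:striptoH}, such a dilation becomes a horizontal translation $z\mapsto z+\log a$, and crucially the $Q_\gamma\log|\phi'|$ term in \eqref{eqn:coc} vanishes for translations of the strip. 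So I would carry out the entire argument on the strip.

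First I would recall that the projection decomposition $h=h_\rad+h_\cir$ is preserved by translations of the strip: if $h$ is translated by $t\in\R$, then $h_\cir$ is translated by $t$ (and its law is translation-invariant, being that of a Neumann GFF projected onto $H_2(\strip)$), while $h_\rad(s)$ becomes $h_\rad(s+t)$. Both definitions specify $h_\cir$ to have the law of $h^{\GFF,\strip}_\cir$ and to be independent of $h_\rad$, so the content is entirely about matching the laws of the \emph{radial parts} up to a random translation. In the unit circle parametrisation the process $s\mapsto h_\rad(s) + (\alpha - Q_\gamma)s$ — written in the strip coordinate, where the drift of $h_\rad(e^{-s})-Q_\gamma s$ translates appropriately — is a two-sided Brownian-type process with drift that hits its running infimum relative to the line of slope $(Q_\gamma-\alpha)$ for the \emph{first} time at $0$; in the last exit parametrisation it hits it for the \emph{last} time at $0$. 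The key computation is then: take the unit-circle radial process, let $\tau$ be the \emph{last} time (which is a.s.\ finite since $\alpha<Q_\gamma$ forces the relevant drift to be positive, so the process drifts away from the line) that $s\mapsto B_{2s}+\alpha s - Q_\gamma s$ — appropriately two-sided — equals $0$, and translate by $\tau$. I would check that after this translation the forward part $(h_\rad(e^{-s}))_{s\ge 0}$ becomes a Brownian motion with drift $\alpha$ \emph{conditioned to stay below} $Q_\gamma s$, and the backward part becomes an unconditioned Brownian motion with drift $\alpha$ — exactly Definition \ref{def:wedge}. This is a standard last-exit decomposition for Brownian motion with drift (a path decomposition at the last zero of a transient diffusion, in the spirit of Williams' decomposition), and the conditioning "to stay below $Q_\gamma s$ for all time" is the well-defined $h$-transform arising because a Brownian motion with drift $\alpha<Q_\gamma$ has positive probability of never crossing the line $Q_\gamma s$ upward from a point strictly below it.

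The main obstacle I anticipate is making the last-exit path decomposition fully rigorous and correctly identifying the conditioned process as an honest (non-degenerate) law: one must verify that the "last exit" time is a.s.\ finite and positive for the unit-circle process, that the post-$\tau$ and pre-$\tau$ pieces are conditionally independent given their common value at $\tau$ (which is $Q_\gamma\tau$), and that the resulting forward law is precisely the Doob $h$-transform describing Brownian motion with drift conditioned never to hit the moving barrier. A subtlety is that the "conditioning" event has positive probability only after the drift is accounted for, so one should phrase everything in terms of the process $s\mapsto (\text{radial part}) - \alpha s$ versus the line $(Q_\gamma-\alpha)s$, reducing to driftless Brownian motion below a line of positive slope; then finiteness of the last exit time and the product structure follow from classical fluctuation theory (e.g., the reflection principle and the strong Markov property), and the independence of $h_\cir$ is carried along for free since translations act on $h_\cir$ and $h_\rad$ separately. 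For $\gamma\in(0,2)$ one always has $\alpha=\gamma<Q_\gamma$, so all of this applies; I would note that the same computation, with the Bessel process replacing the conditioned Brownian motion, is what underlies Definition \ref{def:critical-wedge} in the boundary case $\gamma=2$, but that case is excluded from the present lemma.
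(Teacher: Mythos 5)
Your proposal is correct and takes essentially the same route as the paper: pass to the strip so that dilations of $\HH$ become translations (annihilating the $Q_\gamma\log|\phi'|$ term in \eqref{eqn:coc}), observe that the circular part $h_\cir$ is translation-stationary and independent of the radial part, and relate the two radial-part laws by a Williams-type path decomposition at the a.s.\ finite last/first zero of the drifted Brownian motion $s\mapsto h_\rad(e^{-s})-Q_\gamma s$. The paper runs the translation in the opposite direction (shifting the last-exit representative by $a=\inf\{t\le 0:\wh h_{\ell,\rad}(t)<0\}$) and invokes \cite[Lemma 3.4, Remark 3.5]{RV-tail-GMC} together with \cite{Wil74} for the decomposition you sketch, but this is only a cosmetic difference.
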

	\begin{proof}
		Let $(\HH,h_u,-\infty,\infty)$ be the \cite{DMS18+} definition of a quantum wedge, as in Remark \ref{rmk:dmswedge}. 
		Let $(\HH,h_\ell,-\infty,\infty)$ be a $(\gamma,\alpha)$-quantum wedge with the last exit parametrization as in Definition \ref{def:wedge}. Also let $\phi:\strip\to\HH$ be defined by \eqref{eqn:striptoH}, and observe that $\{0 \}\times[0,\pi]$ is mapped to the unit semi-circle under this map.
		
		Define 
		$\wh h_{\uo}=h_{\uo}\circ\phi+Q_\gamma\log|\phi'|$ and 
		$\wh h_{\ell}=h_\ell\circ\phi+Q_\gamma \log|\phi'|$, and let 
		$\wh h_{\uo}=\wh h_{\uo,\rad}+\wh h_{\uo,\cir}$ and $\wh h_{\ell}=\wh h_{\ell,\rad}+\wh h_{\ell,\cir}$ be the orthogonal decompositions of these fields. Then $\wh h_{\uo,\cir}$ and
		$h_{\ell,\cir}$ are both equal in distribution to $h_\cir^{\GFF,\strip}$. For $B$ a standard Brownian motion, $(\wh h_{\ell,\rad}(s))_{s\ge 0}$ has the law of $(B_{2s}+(\alpha-Q) s)_{s\ge 0}$ conditioned to be negative for $s>0$, and $(\wh h_{\ell,\rad}(s))_{s\le 0}$ has the law of $(B_{-2s}+(\alpha-Q) s)_{s\le 0}$. Furthermore, $(\wh h_{\uo,\rad}(s))_{s\ge 0}$ has the law of $(B_{2s}+(\alpha-Q) s)_{s\ge 0}$, and $(\wh h_{\uo,\rad}(s))_{s\le 0}$ has the law of $(B_{-2s}+(\alpha-Q) s)_{s\le 0}$ conditioned to be positive for $s<0$. Let $a=\inf\{t\leq 0\,:\, \wh h_{\ell,\rad}(t) <0 \}$. We conclude by observing that if we apply the change of coordinates $z\mapsto z-a$ to the field $\wh h_{\ell}$, 
		we get a field with the law of $\wh h_{\uo}$\en{; this can e.g.\ be deduced from the last assertion of \cite[Lemma 3.4]{RV-tail-GMC} and \cite[Remark 3.5]{RV-tail-GMC}, which refers to \cite{Wil74}}. 
	\end{proof}

	\subsection{Gaussian multiplicative chaos and the Liouville measures}\label{sec:gmc}

	In this section, we give a proper definition of the \emph{boundary} Liouville quantum gravity measures described in the introduction. For a much more complete survey, including the case of bulk LQG measures, we refer the reader to \cite{RV10,DS11,Ber17} for the subcritical case and to \cite{RV14,DRSV14one,DRSV14two,Pow18chaos} for the critical case. 
	
	In the following, when we refer to the topology of \emph{local weak convergence} for measures on $\RR$, we mean the topology such that $\mu_n\to \mu$ iff $\mu_n|_{[-R,R]}\to \mu|_{[-R,R]}$ weakly as measures on $[-R,R]$ for every $R>0$.
	
	The following statement comes from \cite{DS11} when $\gamma\in (0,2)$, and from 
	\cite{Pow18chaos} when $\gamma=2$ (with a trivial adaptation of the argument from the bulk to the boundary measure). 
	\begin{lemma}\label{lem::defnlm}
		Suppose that $\gamma\in (0,2]$ and let $h$ be a Neumann GFF in $\HH$ with some fixed choice of additive constant, or a GFF with mixed boundary conditions on $\D^+=\D\cap \HH$ (free on $\partial \D^+\cap \R$, and Dirichlet with some $\rho$ on  $\partial \D^+ \setminus \R$). Let $h_\eps$ denote the $\eps$ semi-circle average field of $h$ on $\RR$,\footnote{That is, $h_\eps(z)=(h,\rho_\eps^z)$ where $\rho_\eps^z$ is uniform measure on the semi-circle (contained in $\HH$) of radius $\eps$ around $z$.} let $dz$ denote Lebesgue measure on $\RR$, and set
		\begin{align}
		\nu_{h,\eps}^{\gamma}(dz) & =\exp\Big(\frac{\gamma}{2}h_\eps(z)\Big)\eps^{\gamma^2/4} \, dz & \gamma\in (0,2),\\
		\nu_{h,\eps}(dz) & = \Big(-\frac{h_\eps}{2} +\log(1/\eps)\Big)\exp(h_\eps(z))\eps \, dz & \gamma=2.
		\end{align}
		Then $\nu_{h,\eps}^\gamma$ converges in probability to a limiting measure $\nu_h^\gamma$ (resp., $\nu_{h,\eps}$ converges in probability to a limiting measure $\nu_h$ when $\gamma=2$) as $\eps\to 0$. These convergences are with respect to the topology of local weak convergence of measures on $\RR$.
	\end{lemma}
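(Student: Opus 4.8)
The plan is to derive the lemma from the existing theory of Gaussian multiplicative chaos (GMC): the case $\gamma\in(0,2)$ is \cite{DS11} and the case $\gamma=2$ is \cite{Pow18chaos}, so the real work is to arrange things so that these results apply directly. First I would reduce to the restriction of the measures to a fixed compact interval $[-R,R]$, since local weak convergence means precisely convergence of these restrictions for every $R>0$. By the Markov property of the Neumann GFF, its restriction to a half-disc of radius slightly larger than $R$ centred at $0$ differs from a mixed boundary GFF on that half-disc (free on the real arc, zero Dirichlet on the interior arc) by an independent function that is a.s.\ continuous on $[-R,R]$; a scaling, and the prescription of the boundary data $\rho$, add only further continuous harmonic corrections. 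So it suffices to treat the mixed boundary GFF on $\D^+$, together with the elementary fact that if $f_\eps\to f$ locally uniformly with $f_\eps,f$ continuous, then replacing $h_\eps$ by $h_\eps+f_\eps$ multiplies the limiting measure by the continuous density $\e^{(\gamma/2)f}$ (resp.\ $\e^{f}$ when $\gamma=2$); this absorbs the corrections above and also shows the limit is insensitive to the precise mollifier, so that one may work with the semi-circle average.

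The analytic input is that the boundary trace of the GFF is log-correlated: writing out the Neumann Green's function on $\HH$ (or computing on $\D^+$) one finds $\mathbb{E}[h_\eps(x)\,h_{\eps'}(y)] = -c\log(|x-y|\vee\eps\vee\eps') + \mathfrak{g}(x,y) + o(1)$ with $\mathfrak{g}$ bounded and continuous on compacts, the constant $c$ being twice its bulk value because of the reflection across $\R$ --- which is exactly why the powers of $\eps$ in the normalisations take the form they do. With this in hand, for $\gamma\in(0,2)$ the standard GMC construction applies: along dyadic scales $\eps=2^{-n}$ the measures $\nu^\gamma_{h,2^{-n}}$ on $[-R,R]$ are, up to the bounded correction coming from $\mathfrak{g}$, a nonnegative martingale in $n$, and they are uniformly integrable on $[-R,R]$ because $\gamma<2$ gives a uniform $(1+\delta)$-moment bound; hence they converge in $L^1$, and a routine comparison argument upgrades this to convergence in probability along $\eps\to 0$ continuously and identifies the limit. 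This is precisely the content of \cite{DS11}, the only change being boundary rather than bulk constants.

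The critical case $\gamma=2$ is where the substantive difficulty lies, and I would invoke \cite{Pow18chaos} for it directly, the passage from the bulk critical measure of \cite{DRSV14two} to the boundary measure being purely notational. The point is that at criticality the naive object $\e^{h_\eps}\eps$ converges to the zero measure, so one must instead use the Seneta--Heyde normalisation $(-h_\eps/2+\log(1/\eps))\,\e^{h_\eps}\,\eps$ appearing in the statement and prove that it converges in probability to a nontrivial limit, which then agrees up to a deterministic multiplicative constant with the limit of the derivative martingale. The main obstacle is that the relevant martingales are no longer uniformly integrable, so $L^1$ or $L^2$ estimates are unavailable: one instead truncates on the event that $h_\eps$ stays below a slowly growing barrier, controls the truncated second moment, and shows the discarded part is asymptotically negligible. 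Handling this near-critical contribution of atypically large values of the field is the crux of the matter.
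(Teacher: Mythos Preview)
Your proposal is correct and takes the same approach as the paper: the paper's ``proof'' consists entirely of citing \cite{DS11} for $\gamma\in(0,2)$ and \cite{Pow18chaos} for $\gamma=2$, with the remark that the adaptation from bulk to boundary is trivial. You have supplied considerably more expository detail than the paper does --- the Markov-property reduction to the mixed boundary case, the log-correlated structure of the boundary field, and the outline of the Seneta--Heyde argument --- but this is all sound and simply fleshes out what the paper leaves implicit.
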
 
	
	\begin{lemma}\label{lem::defnlmqw}
		The result of Lemma \ref{lem::defnlm} also holds when $h$ is the field of a $(\gamma',\alpha)$-quantum wedge in the last exit parametrisation, with $2\ge \gamma'\ge \gamma$ and $\alpha<Q_{\gamma'}\le Q_\gamma$.
	\end{lemma}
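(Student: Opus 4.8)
The plan is to reduce the statement about a quantum wedge to the statement about a Neumann GFF (Lemma \ref{lem::defnlm}) via a local absolute continuity argument, handling the logarithmic singularity at the origin separately. First I would recall that the field $h$ of a $(\gamma',\alpha)$-quantum wedge, in the last exit parametrisation, decomposes as $h = h_\rad + h_\cir$, with $h_\cir \eqD h_\cir^{\GFF}$ and $h_\rad$ a (conditioned) Brownian-type process in the log-radial coordinate. Away from the origin and from $\infty$ — say on a fixed annular region $A_{r,R} = \{z \in \HH : r < |z| < R\}$ — the radial part $h_\rad$ is a.s.\ a continuous function, hence bounded, and the restriction of $h$ to $A_{r,R}$ is mutually absolutely continuous with respect to the restriction of a Neumann GFF to $A_{r,R}$ (the Radon--Nikodym derivative being a function of $h_\rad$ and the projection onto the harmonic functions near the boundary of $A_{r,R}$; this is a standard fact, cf.\ the absolute continuity statement at the end of Section 2.1). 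Since the convergence in Lemma \ref{lem::defnlm} is in probability and with respect to local weak convergence on $\RR$, and since convergence in probability is preserved under absolutely continuous changes of measure, the measures $\nu_{h,\eps}$ (resp.\ $\nu_{h,\eps}^\gamma$) converge in probability on every compact subset of $\RR \setminus \{0\}$.

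Next I would deal with a neighbourhood of the origin. Fix a small $\delta>0$; on the interval $[-\delta,\delta]$ the field has an $\alpha\log$-type singularity at $0$, so the strategy is to write $h = \alpha \log(1/|\cdot|) \star (\text{bounded correction}) + \tilde h$, where $\tilde h$ has the law of (a field locally absolutely continuous with respect to) a Neumann GFF. For a GFF with an added deterministic $\alpha\log|z|^{-1}$ singularity, the boundary GMC measure near the singularity is still well-defined and finite exactly when $\alpha < Q_{\gamma}$ in the subcritical case, and when $\alpha < Q = 2$ in the critical case — this is the classical Seiberg bound for the boundary measure, and it is exactly the range assumed in the statement ($\alpha < Q_{\gamma'} \le Q_\gamma$). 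Concretely, one checks that $\int_{[-\delta,\delta]} |z|^{-\gamma\alpha/2} \cdot (\text{GMC near }0)\, dz < \infty$ a.s.\ under this condition, using the known moment/tail bounds for boundary GMC (subcritical: \cite{DS11}; critical: \cite{Pow18chaos}). The semicircle-average regularisations $\nu_{h,\eps}$ near $0$ then converge by combining the convergence for the pure GFF part with the deterministic weight $|z|^{-\gamma\alpha/2}$ (up to the bounded correction), again using that convergence in probability survives the absolutely continuous change of measure. A mild technical point is that the critical normalisation $(-h_\eps/2 + \log(1/\eps))$ interacts with the log singularity; one should verify that the derivative-martingale normalisation still produces the correct limit, which follows because near $0$ the singular part only shifts $h_\eps(z)$ by a deterministic $O(\log(1/|z|))$ and does not affect the $\eps \to 0$ asymptotics of the normalising factor at a fixed $z \ne 0$, while the behaviour as $z \to 0$ is governed by the Seiberg integrability just discussed.

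Finally I would patch the two regions together: given $R>0$, split $[-R,R] = [-R,-\delta]\cup[-\delta,\delta]\cup[\delta,R]$, apply the GFF argument on the two outer pieces and the singularity argument on the middle piece, and let $\delta \to 0$ to control the contribution of $[-\delta,\delta]$ uniformly (here the a.s.\ finiteness of the limiting measure on $[-\delta,\delta]$, which tends to $0$ as $\delta \to 0$, gives the needed tightness to upgrade piecewise convergence to convergence on $[-R,R]$). The main obstacle I anticipate is the critical case near the origin: one must show that the derivative-martingale construction of $\nu_h$ is robust to adding the $\alpha\log$ singularity with $\alpha$ all the way up to (but not including) $Q=2$, and that this holds simultaneously with the $3$-dimensional Bessel-type conditioning built into the $(2,2)$-wedge. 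I expect this to be handled by the same truncation/Girsanov-type comparison used in \cite{Pow18chaos} together with the explicit description of $h_\rad$ near $0$ from Definition \ref{def:critical-wedge} (or Remark \ref{rmk::stripwedge}), since on any region bounded away from $0$ the Bessel process is a time-changed, conditioned Brownian motion and hence locally absolutely continuous with respect to the unconditioned process, reducing everything to the GFF case already treated.
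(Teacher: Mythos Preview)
Your approach is genuinely different from the paper's, and it contains a gap that is not merely cosmetic.

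The claim that the wedge field restricted to an annulus $A_{r,R}$ bounded away from $0$ and $\infty$ is (mutually) absolutely continuous with respect to a Neumann GFF is false whenever the annulus contains the unit semicircle. In the last exit parametrisation, $(h_\rad(e^{-s}))_{s\ge 0}$ has the law of $B_{2s}+\alpha s$ \emph{conditioned} to stay strictly below $Q_{\gamma'}s$ for all $s>0$; this is a null event for the unconditioned process, because $B_{2s}/s$ oscillates between $\pm\infty$ as $s\downarrow 0$ and hence $B_{2s}$ a.s.\ exceeds any line $cs$ infinitely often near $s=0$. Consequently, on any interval $[0,\epsilon]$ the conditioned radial process is supported on paths that stay below the line, while the unconditioned one a.s.\ crosses it, so the two laws are mutually singular. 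Since $h_\rad$ is a measurable functional of $h$ (projection onto $H_1(\HH)$), this singularity propagates to the full field: the wedge restricted to any annulus containing $\{|z|=1\}$ is singular with respect to the Neumann GFF. The reference you give (Cameron--Martin for deterministic $H(D)$ shifts) does not help here, because the difference of radial parts is neither deterministic nor in the Cameron--Martin space in the relevant sense.

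Your argument could be rescued by additionally excising a neighbourhood of the unit semicircle and controlling the mass of $[\pm 1-\delta,\pm 1+\delta]$ uniformly in $\eps$, or by passing to the unit circle parametrisation (where the singular conditioning sits at $\infty$ and so does not obstruct compact intervals). But this is an extra step you have not supplied, and it is precisely the kind of uniform estimate that needs care.

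The paper bypasses all of this with a single global manoeuvre: it realises the wedge field on $\D^+$ as a random transformation of a \emph{weighted} Neumann GFF. One samples $z$ from the normalised $\nu^{\alpha}$-boundary measure of a Neumann GFF $h'$ on $[-1,1]$, re-centres to $\wt h=h'(\cdot+z)$, and then rescales and subtracts a constant to land in the last exit parametrisation. The rooted-measure / Girsanov computation gives the radial part of $\wt h$ the \emph{unconditioned} drifted Brownian law, and the subsequent rescaling by the argmax time produces the conditioned law automatically. Since the starting law of $h'$ is absolutely continuous with respect to the Neumann GFF and the operations (translate by a random point, dilate by a random factor, add a random constant) all preserve convergence in probability of the approximating measures, Lemma~\ref{lem::defnlm} transfers directly. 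This avoids having to split the real line into regions or to confront the singular conditioning at $|z|=1$ at all.
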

	
	Note that we do not require $\gamma'=\gamma$ here. We need to work in this set-up in, for example, Lemma \ref{lem::cd_critwedge}. 
	
	\vspace{0.1cm}
	\begin{proof} For notational simplicity we work in the case $\gamma\in(0,2)$, but the argument when $\gamma=2$ is the same. Without loss of generality, it suffices to show that $\nu_{h,\eps}^\gamma$ converges in probability, as a measure on $[-1,1]$, as $\eps \to 0$. To show this, we will explain how to obtain $h|_{\D}$ from a field $h'$ that is absolutely continuous with respect to a Neumann GFF (by re-centring around a $\nu^{\en{\alpha}}$-typical point). The result then follows from Lemma \ref{lem::defnlm}.
		
		More precisely, we consider the following construction. Let $\mathbb{P}$ denote the law of a Neumann GFF on $\HH$, with additive constant fixed so that its average on the unit semi-circle is equal to $0$, and write $(h',z)$ for a pair with joint law 
		\[ \frac{ \I_{\{z\in [-1,1]\}}\nu^{\alpha}_{h'}(dz)}{\mathbb{E}_\P[\nu^{\alpha}_{h'}([-1,1])]} \, \P(dh'). \]
		Let $\wt h$ be the field $h'$ after re-centring around the point $z$, i.e., $\wt h=h'(\cdot + z)$.
		
		Then it follows from \cite[\S 6.3]{DS11} that if $\wt h_\rad$ is the projection of $\wt h$ onto $H_1(\HH)$ (and $\wt h_\rad(s)$ denotes its common value on the semi-circle of radius $\e^{-s}$ around $0$) then $(\wt h_\rad(s)-\wt h_\rad(0))_{s\ge 0}$ has the law of $(B_{2s}+\alpha s)_{s\ge 0}$, where $B$ is a standard Brownian motion. Moreover, by \en{scale} invariance of $h_\cir^{\GFF}$, the projection $\wt h_{\cir}$ of $\wt h$ onto $H_2(\HH)$ is equal in law to $h_\cir^{\GFF}$. 
		
		Now, let $M=\sup_{s\ge 0} \wt h_\rad(s)-Q_\gamma s $, and let $T$ be the time at which this maximum is achieved (these are both finite a.s.\ since $\alpha<Q_\gamma$ by assumption). Then by scale invariance of $h_{\cir}^{\GFF}$, if $\psi_T:\HH\to \HH$ is the map $z\mapsto \e^{-T}z$, the field $\wh h := \wt h \circ \psi_T - M$ restricted to $\D^+$ has the same law as $h$ restricted to $\D^+$. 
		
		From here we can conclude, by observing that the law of $h'$ is absolutely continuous with respect to that of a Neumann GFF in $\HH$. Therefore, since all that is done to get from $h'$ to $\wh h$ is to re-centre around a random point, rescale by a random amount, and subtract a random constant, Lemma \ref{lem::defnlm} implies that $\nu^\gamma_{\wh h,\eps}$ converges in probability as a measure on $[-1,1]$. By the previous paragraph, the same thing then holds for $h$.
	\end{proof}
	
	\begin{remark0}
		\label{rmk:propslm}
		The measures $\nu_h^\gamma$, $\nu_h$ defined in Lemmas \ref{lem::defnlm} and \ref{lem::defnlmqw} are a.s.\ atomless and give strictly positive mass to every interval of strictly positive length a.s.\ (see, for example, \cite{DS11,DRSV14one}).
	\end{remark0}
	
	\begin{remark0}
		The measures $\nu_{h'}^\gamma$ (resp., $\nu_{h'}$) can be defined using the same regularisation procedure whenever $h'=h\circ \phi + Q_\gamma \log |\phi'|$ (resp., $h'=h\circ\phi + 2 \log |\phi'|$ ) for $h$ as in Lemma \ref{lem::defnlm} or \ref{lem::defnlmqw} and some conformal map $\phi$. Equivalently, $\nu_{h'}^\gamma$ (resp., $\nu_{h'}$) can be defined as the push-forward of $\nu_h^\gamma$ (resp., $\nu_h$) by $\phi^{-1}$.
	\end{remark0}
	
	The following lemma will be important when we construct the critical quantum zipper by taking a limit of subcritical quantum zippers.
	
	\begin{lemma}\label{lem::cd_critwedge}
		Let $\gamma_n\uparrow 2$ as $n \to \infty$, and $h$ be a $(2,1)$-quantum wedge in the last exit parametrisation. Then 
		\[ \frac{\nu_h^{\gamma_n}}{2-\gamma_n}\to 2 \nu_h\]
		in probability as $n\to \infty$, with respect to the topology of local weak convergence of measures on $\RR$. 
	\end{lemma}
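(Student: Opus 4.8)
The plan is to reduce the statement to the corresponding one for the Gaussian free field, which is a consequence of \cite{APS18two}, and then to transfer it to the $(2,1)$-quantum wedge by exactly the absolute-continuity argument already used in the proof of Lemma~\ref{lem::defnlmqw}.

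First I would record the GFF input. If $g$ is a Neumann GFF on $\HH$ with a fixed additive constant, or a mixed-boundary GFF on $\D^+$ as in Lemma~\ref{lem::defnlm}, then the appropriate boundary version of the main result of \cite{APS18two} (obtained by the same trivial adaptation of the bulk argument to the boundary that is used for Lemma~\ref{lem::defnlm}) gives that $\nu_g^{\gamma_n}/(2-\gamma_n)\to 2\nu_g$ in probability as $n\to\infty$, with respect to local weak convergence of measures on $\RR$, the constant $2$ being dictated by the normalisation of $\nu_g$ in Lemma~\ref{lem::defnlm}. I would then note two stability features of this convergence that I will use: it is preserved under replacing $g$ by any field whose law is mutually absolutely continuous with respect to that of $g$ (since convergence in probability is preserved under an absolutely continuous change of measure), and, because it is stated for \emph{local} weak convergence, it is preserved under precomposing the field with a translation $z\mapsto z+a$, $a\in\RR$, which merely translates all the measures involved.

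Next I would transfer this to the wedge. By a covering argument as in the proof of Lemma~\ref{lem::defnlmqw} --- using that away from $0$ the field of the $(2,1)$-quantum wedge is locally absolutely continuous with respect to a GFF (analogously to the subcritical case in \cite{Sh16,DMS18+}), so that the previous paragraph applies directly there --- it suffices to prove the convergence as measures on $[-1,1]$. Recall from the proof of Lemma~\ref{lem::defnlmqw} that there is a field $\wh h$ with $\wh h|_{\D^+}\eqD h|_{\D^+}$, obtained from a field $h'$ whose law is absolutely continuous with respect to a Neumann GFF by: re-centring $h'$ around a random point $z\in[-1,1]$ to obtain $\wt h$; pre-composing with the dilation $\psi_T\colon z\mapsto e^{-T}z$ for a random, a.s.\ finite $T\ge 0$; and subtracting a random constant $M$. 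Writing $\wh h=\bigl(\wt h\circ\psi_T+Q_{\gamma_n}\log|\psi_T'|\bigr)+\bigl(Q_{\gamma_n}T-M\bigr)$ and combining the coordinate-change rule for $\nu^{\gamma_n}$ with the fact that adding a constant $c$ multiplies $\nu^{\gamma_n}$ by $e^{\gamma_n c/2}$, I get
\[
\frac{\nu^{\gamma_n}_{\wh h}}{2-\gamma_n}=e^{\frac{\gamma_n}{2}(Q_{\gamma_n}T-M)}\,(\psi_T^{-1})_*\!\left[\frac{\nu^{\gamma_n}_{\wt h}}{2-\gamma_n}\right],
\]
while the analogous computation with $Q_{\gamma_n}$ replaced by $Q_2=2$ and $\nu^{\gamma_n}$ replaced by $\nu$ gives $\nu_{\wh h}=e^{2T-M}(\psi_T^{-1})_*\nu_{\wt h}$. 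Since $\tfrac{\gamma_n}{2}Q_{\gamma_n}=1+\tfrac{\gamma_n^2}{4}\to 2$ and $\tfrac{\gamma_n}{2}M\to M$ as $\gamma_n\uparrow 2$, the prefactor converges a.s.\ to $e^{2T-M}$; applying the GFF input to $\wt h$ (legitimate since, conditionally on $z$, $\wt h$ is a translate by $z$ of a field absolutely continuous with respect to a Neumann GFF) to get $\nu^{\gamma_n}_{\wt h}/(2-\gamma_n)\to 2\nu_{\wt h}$, and using continuity of the push-forward by $\psi_T^{-1}$, I obtain $\nu^{\gamma_n}_{\wh h}/(2-\gamma_n)\to 2\nu_{\wh h}$ in probability, locally weakly. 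Restricting to $\D^+\cap\RR$ and using $\wh h|_{\D^+}\eqD h|_{\D^+}$ together with the locality of the measures in the field then completes the argument.

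The step I expect to require the most care is the GFF input itself in the boundary setting: one must make sure that the version of \cite{APS18two} being invoked is the derivative-martingale normalisation matching the definition of $\nu_h$ in Lemma~\ref{lem::defnlm} --- so that the constant is exactly $2$ and the normalisation is by $2-\gamma_n$ rather than, say, $\sqrt{2-\gamma_n}$ --- and that it is a joint-in-probability statement (for a sequence $\gamma_n\uparrow 2$, all measures built from the same field), not merely convergence in law. The remaining ingredients --- stability of convergence in probability under absolute continuity, translations and push-forwards by conformal maps, the bookkeeping of the $\gamma_n$-dependent constants, and the covering reduction to $[-1,1]$ --- are routine, the latter two following the template of Lemma~\ref{lem::defnlmqw}.
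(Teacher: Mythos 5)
Your proposal is correct and follows essentially the same route as the paper: the paper's proof consists of a citation to \cite[\S 4.1.1--2]{APS18two} for the GFF case, followed by the one-line remark that the extension to the $(2,1)$-quantum wedge goes ``by the same proof as for Lemma~\ref{lem::defnlmqw}'' (re-centring around a $\nu^1_h$-typical point), which is precisely the re-centring/dilating/constant-subtraction calculation you have spelled out, including the bookkeeping $\tfrac{\gamma_n}{2}Q_{\gamma_n}\to 2$ and $\tfrac{\gamma_n}{2}M\to M$. One small imprecision worth noting: your parenthetical ``legitimate since, conditionally on $z$, $\wt h$ is a translate by $z$ of a field absolutely continuous with respect to a Neumann GFF'' is not literally correct --- conditionally on $z$, the law of $h'$ is a rooted measure (a Girsanov shift by $\alpha G(z,\cdot)$) that is not absolutely continuous with respect to the GFF. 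What one actually uses is that the \emph{unconditional} law of $h'$ (the GFF reweighted by $\nu^\alpha_{h'}([-1,1])$) is absolutely continuous with respect to a Neumann GFF, combined with the stability of local-weak convergence in probability under a bounded random translation (since $z\in[-1,1]$); this is exactly what you invoked earlier as your two ``stability features,'' so the argument as a whole is sound, and the paper's proof of Lemma~\ref{lem::defnlmqw} glosses over the same point in the same way.
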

	
	\begin{proof}
		This was shown in \cite[\S 4.1.1-2]{APS18two} when $h$ is either one of the fields in the statement of Lemma \ref{lem::defnlm}. It extends to the case when $h$ is a $(2,1)$-quantum wedge by the same proof as for Lemma \ref{lem::defnlm} (using that it holds for the Neumann boundary condition GFF and then re-centring the field around a $\nu_h^{1}$-typical point).
	\end{proof}
	
	\subsection{Schramm--Loewner evolutions}\label{sec:sle}
	We assume the reader is familiar with the basic theory of Schramm--Loewner evolutions (SLE): for an introduction, see e.g.\ \cite{Law05,Kem17}. In this section we simply fix some notation and discuss a few points that will be relevant later on. 
	
	In this article, we will consider \emph{chordal} $\SLE_\kappa$ with $\kappa\in (0,4]$. $\SLE_\kappa$ in $\HH$ from $0$ to $\infty$ is defined to be the Loewner evolution in $\HH$ with random driving function $(W_t)_{t\ge 0}=(\sqrt{\kappa}B_t)_{t\ge 0}$, where $B$ is a standard Brownian motion. When $\kappa\in (0,4]$ an SLE$_\kappa$ is a.s.\ a simple curve that does not touch the real line. We usually parametrise an $\SLE_\kappa$ curve $\eta$ by \emph{half-plane capacity}; that is, we choose the parametrisation of $\eta$ such that for every $t>0$, the unique conformal map $\wt g_t:\HH\setminus \eta([0,t])\to \HH$ with $\wt g_t(z)=z+a_t/z + O(|z|^{-2})$ as $|z|\to \infty$ for some $a_t>0$, satisfies $a_t=2t$. We use the notation $g_t$ for the \emph{centred Loewner map} $g_t=\wt g_t-W_t$, that sends $\eta(t)$ to $0$.
	
	A curve $\eta$ between boundary points $a$ and $b$ in a domain $D$ is said to be an $\SLE_\kappa$ from $a$ to $b$ if it is the image of an $\SLE_\kappa$ in $\HH$ from $0$ to $\infty$, under a conformal map from $ \HH$ to $ D$ mapping $0$ to $a$ and $\infty$ to $b$. 
	
	\begin{defn}[Reverse $\SLE_{\kappa}$] A \emph{reverse Loewner evolution} with continuous driving function $W_t: [0,\infty)\to \R$ is a solution $\wt{f}(t,z)=\wt{f}_t(z)$ to the following differential equation for every $z\in \HH$: 
		\eqbn
		\partial_t \wt f_t(z)=\frac{-2}{\wt f_t(z)-W_t}; \;\;\;\; \wt f_0(z)=z.
		\eqen
		In fact for every $z\in \HH$ (see e.g.\ \cite[Lemma 4.9]{Kem17}), a solution exists for all $t\ge 0$, so that each $\wt f_t$ defines a map $\HH\mapsto \wt f_t(\HH)$. 
		
		A reverse $\SLE_{\kappa}$ flow is the reverse Loewner evolution $(\wt f_t)_{t\ge 0}$ driven by $W_t=\sqrt{\kappa} dB_t$, where $B$ is a standard Brownian motion. One can also consider the \emph{centred} reverse $\SLE_{\kappa}$ flow, defined by $f_t(z)=\wt f_t(z+W_t)$ for all $z,t$. Then $(f_t)_{t\ge 0}$ satisfies the following SDE for all $z\in \HH$:
		\eqb
		df_t(z)=\frac{-2}{f_t(z)}dt-dW_t \;\;\;\; f_0(z)=z.
		\label{eqn:rsle}
		\eqe
		Moreover, there a.s.\ exists a continuous curve $\eta$ such that for each $t$ we have $\HH\setminus f_t(\HH)=\eta([0,t])$.
		\label{def:rsle}
	\end{defn}
	Due to the time-reversal property of Brownian motion, if $(f_t)_{t\ge 0}$ is a centred reverse $\SLE_{\kappa}$ and $(g_t)_{t\ge 0}$ is a centred forward $\SLE_{\kappa}$, both parametrised by half-plane capacity, then for any \emph{fixed} $t\geq 0$, $f_t^{-1}$ is equal in law to $g_t$. In other words, if $t>0$ is fixed, $\eta([0,t])$ is a forward $\SLE_{\kappa}$ run until it has half-plane capacity $t$ and $\eta'([0,t])$ is a reverse $\SLE_{\kappa}$ run until it has half-plane capacity $t$, then $\eta([0,t])$ is equal in law to  $\eta'([0,t])$.
	\vspace{0.1cm}
	
	Let us now provide a notion of convergence for Loewner evolutions; this will be particularly important in our construction of the critical conformal welding. Note that when considering sequences $(f_n)_{n\in \N}$ or $(g_n)_{n\in \N}$ of Loewner evolutions, we move the time parameter $t$ into a superscript.
	
	\begin{defn}\label{def::cart_conv}
		Suppose that $(f^t_n)_{t\ge 0}$ for $n\in \N$ and $(f^t)_{t\ge 0}$  are centred, reverse Loewner evolutions in $\HH$ from $0$ to $\infty$, parametrised by half-plane capacity. Let $\sigma_n: \RR\to [0,\infty)$ be defined by setting 
		$ \sigma_n(x)=\inf \{t\ge 0: f^t_n(x)=0\}$ for each $x\in \RR, n\in \N $,
		and define $\sigma$ in the corresponding way for $f$. Then we say that $f_n\to f$ \emph{in the Carath\'{e}odory+ topology} if 
		\begin{compactitem}
			\item for every $T<\infty$ and $\eps>0$, $f_n$ converges to $f$ uniformly on $[0,T]\times \{\HH+\im \eps\}$; and 
			\item $\sigma_n\to \sigma$ uniformly on compacts of $\RR$. 
		\end{compactitem} 
	\end{defn}
	
	\begin{remark0}
		Note that this is stronger than the usual notion of Carath\'{e}odory convergence for Loewner evolutions. For \emph{forward} Loewner evolutions, Carath\'{e}odory convergence is characterised by the requirement that, if $g_n,g$ are the flows in question, we have $g_n^{-1}\to g^{-1}$ uniformly on $[0,T]\times \{\HH+\im \eps\}$ for every $T,\eps>0$ (see \cite[\S 4.7]{Law05}). The motivation for working with this stronger topology should be clear from the nature of the conformal welding problem that we are considering.
	\end{remark0}
	
	In the sequel we make the following slight abuse of notation. Suppose we have $(\eta_n)_{n\in \N}$ and $\eta$, a collection of simple, continuous, transient curves starting from $0$ in $\HH$. Then we will say that $\eta_n\to \eta$ in the \emph{Carath\'{e}odory} topology, if the corresponding forward (half-plane capacity parametrised) Loewner evolutions converge in the Carath\'{e}odory sense.
	
	The convergence results that will be important in this article are the following.
	
	\begin{lemma}
		\label{lem:eta_conv}
		Suppose that $\kappa_n\uparrow 4$ as $n\to \infty$, that $\eta_n$ has the law of an $\SLE_{\kappa_n}$ curve in $\HH$ from $0$ to $\infty$ for each $n\in \N$, and that $\eta$ has the law of an $\SLE_4$ in $\HH$ from $0$ to $\infty$. Then $\eta_n\to \eta$ in distribution as $n\to \infty$, with respect to the Carath\'{e}odory topology.
	\end{lemma}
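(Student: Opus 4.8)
\textbf{Proof proposal for Lemma \ref{lem:eta_conv}.}

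The plan is to prove convergence in distribution in the Carath\'eodory topology by establishing convergence of the driving functions together with tightness, and then upgrading to the stronger ``$+$'' statement about the hitting times $\sigma_n$. First I would recall that the $\SLE_{\kappa_n}$ curve $\eta_n$ has driving function $W^n_t = \sqrt{\kappa_n}\,B_t$ for a standard Brownian motion $B$ (which we may take to be the \emph{same} $B$ for all $n$, since this does not affect the marginal laws), and similarly $\eta$ has driving function $W_t = 2 B_t$. Since $\sqrt{\kappa_n} \to 2$, we have $W^n \to W$ uniformly on compact time intervals, almost surely. By continuity of the solution of the (forward) Loewner ODE with respect to its driving function --- more precisely, by standard estimates on the Loewner flow showing that $g^{-1}_{n,t}$ depends continuously on $(W^n, t)$ uniformly on $[0,T]\times\{\HH + \im\eps\}$ (see e.g.\ \cite[\S 4.7]{Law05}) --- it follows that the inverse flows converge, i.e.\ $\eta_n \to \eta$ in the usual Carath\'eodory topology, almost surely along this coupling, hence in distribution. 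This handles the first bullet of Definition \ref{def::cart_conv} applied to the forward evolutions, which is what the abuse of notation preceding the lemma refers to; note the lemma as stated concerns only the Carath\'eodory topology (not Carath\'eodory$+$), so strictly speaking the hitting-time condition is not part of this particular statement.

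The subtle point I would want to be careful about is \emph{why continuity of the Loewner flow in the driving function is enough} given that we also want the curves, not just the hulls, to converge: for $\kappa \le 4$ all the $\SLE_{\kappa_n}$ and $\SLE_4$ are generated by simple curves, and the curve is recovered from the flow by $\eta(t) = \lim_{y\downarrow 0} g^{-1}_t(W_t + \im y)$; one should check this limit is sufficiently uniform that Carath\'eodory convergence of the hulls plus uniform convergence of the driving functions yields convergence of the curves in the appropriate sense. In fact, for the Carath\'eodory topology as defined here (convergence of $g^{-1}_n$ uniformly on $[0,T]\times\{\HH+\im\eps\}$) this is immediate and no curve-level statement beyond the flow convergence is needed. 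So the core of the argument is really just: (i) couple all driving functions to one Brownian motion; (ii) $\sqrt{\kappa_n}\to 2$ gives uniform-on-compacts convergence of driving functions; (iii) invoke the deterministic continuity of the backward Loewner flow in its driving function. Since almost sure convergence implies convergence in distribution, we are done.

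The main obstacle, such as it is, is locating or stating cleanly the deterministic continuity estimate for the Loewner flow: one needs that if $W^n \to W$ uniformly on $[0,T]$ then $g^{-1}_{n,t}(z) \to g^{-1}_t(z)$ uniformly over $t\in[0,T]$ and $z \in \HH + \im\eps$, which follows from Gr\"onwall-type bounds on $\partial_t g^{-1}_t(z) = -2/(g^{-1}_t(z) - W_t)$ once one knows $\Im g^{-1}_t(z) \ge \eps$ is preserved (indeed $\Im$ is non-decreasing under the backward flow). This is entirely standard and I would simply cite \cite[\S 4.7]{Law05} or \cite{Kem17}. If instead one wanted the Carath\'eodory$+$ statement (needed elsewhere in the paper but not asserted in this lemma), the extra work would be controlling $\sigma_n \to \sigma$ uniformly on compacts of $\RR$, which requires a lower bound on the ``speed'' $|\partial_t f^t_n(x)| = 2/|f^t_n(x)|$ at which boundary points are swallowed, uniform in $n$; that is where a genuine (though not difficult) argument using the reverse flow and the fact that boundary points of $\HH$ are absorbed in finite time would be required. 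For the present lemma, however, the three-step argument above suffices.
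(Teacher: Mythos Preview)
Your proposal is correct and takes essentially the same approach as the paper, which simply cites \cite[Lemma 6.2]{Kem17}: couple the driving functions via a single Brownian motion, use $\sqrt{\kappa_n}\to 2$ to get uniform-on-compacts convergence of the driving functions, and invoke deterministic continuity of the (inverse) Loewner flow in its driving function. Your discussion correctly identifies that only the Carath\'eodory topology (not Carath\'eodory$+$) is needed here, and your sketch of the Gr\"onwall-type continuity estimate is exactly the content of the cited result.
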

	
	\begin{proof} See \cite[Lemma 6.2]{Kem17} \end{proof}

	\begin{lemma}\label{lem::df_conv}
		Suppose that $\kappa_n\uparrow 4$ as $n\to \infty$, and that $f_n$ is a centred, reverse $\SLE_{\kappa_n}$ in $\HH$ from $0$ to $\infty$ for each $n$. Let $f$ be a centred, reverse $\SLE_4$ in $\HH$ from $0$ to $\infty$. Then $f_n$ converges to $f$ in distribution, with respect to the Carath\'{e}odory+ topology. 
	\end{lemma}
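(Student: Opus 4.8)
The plan is to couple all the flows to a single Brownian motion and prove almost sure convergence in the Carath\'eodory+ topology, which in particular gives convergence in distribution. Let $B$ be a standard Brownian motion with $B_0=0$, and let $f_n$ (resp.\ $f$) be the centred reverse Loewner flow solving \eqref{eqn:rsle} with driving function $W^n_t=\sqrt{\kappa_n}B_t$ (resp.\ $W_t=2B_t$). Since $\kappa_n\to4$, almost surely $W^n\to W$ uniformly on $[0,T]$ for every $T<\infty$. By Definition~\ref{def::cart_conv} it then remains to verify: (i) that $f_n\to f$ uniformly on $[0,T]\times\{\HH+\im\eps\}$ for every $T,\eps>0$; and (ii) that $\sigma_n\to\sigma$ uniformly on compact subsets of $\R$.

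Part (i) should be routine. Since the imaginary part of $f^t_n(z)$ is non-decreasing in $t$ (immediate from \eqref{eqn:rsle}), the flows stay in $\{\Im w\ge\eps\}$ whenever $\Im z\ge\eps$, and there $w\mapsto-2/w$ is bounded and Lipschitz with constants depending only on $\eps$; moreover $|f^t_n(z)-z|$ and $|f^t(z)-z|$ are bounded uniformly for $t\le T$. A Gr\"onwall estimate for $t\mapsto\sup\{|f^s_n(z)-f^s(z)|:s\le t,\ \Im z\ge\eps\}$ then bounds this quantity by $C(T,\eps)\sup_{t\le T}|W^n_t-W_t|\to0$; this is the same kind of stability estimate that underlies Lemma~\ref{lem:eta_conv} (see \cite{Kem17}).

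Part (ii) is the heart of the matter. Fix $x>0$ (the case $x<0$ is symmetric, and $\sigma_n(0)=0$ for all $n$, with $\sigma(0)=0$), and set $Z^n_t=f^t_n(x)$; by \eqref{eqn:rsle} this solves $dZ^n_t=-2(Z^n_t)^{-1}\,dt-\sqrt{\kappa_n}\,dB_t$ and stays real and strictly positive on $[0,\sigma_n(x))$. It\^o's formula shows that, after a deterministic linear time-change, $(Z^n_t)^2$ is a squared Bessel process of dimension $\delta_n:=1-4/\kappa_n<0$ started from $x^2$ and $\sigma_n(x)$ is its first hitting time of $0$; likewise $(Z_t)^2$ is a squared Bessel process of dimension $0$ with $\sigma(x)$ its hitting time of $0$. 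Since $\delta_n\uparrow0$, I would first establish the pointwise statement $\sigma_n(x)\to\sigma(x)$ almost surely under the coupling. One inequality is soft: for $\delta\in(0,x)$ put $\tau_\delta=\inf\{t:Z_t=\delta\}$, so $\tau_\delta\uparrow\sigma(x)$ as $\delta\downarrow0$; running the comparison of $Z^n$ and $Z$ until the gap $|Z^n-Z|$ first reaches $\delta/2$ (on which both processes remain $\ge\delta/2$, where $-2/w$ is Lipschitz), a localised Gr\"onwall estimate gives $\sup_{t\le\tau_\delta}|Z^n_t-Z_t|\to0$, hence $Z^n_{\tau_\delta}\to\delta>0$ and $\sigma_n(x)\ge\tau_\delta$ for large $n$; letting $\delta\downarrow0$ yields $\liminf_n\sigma_n(x)\ge\sigma(x)$. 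The matching bound $\limsup_n\sigma_n(x)\le\sigma(x)$ is the delicate point, and I expect it to be the main obstacle: near the hitting time the driving ODE is singular and the diffusion coefficient of $(Z^n_t)^2$ is only H\"older-$\tfrac12$ in the space variable, so the plain Gr\"onwall reasoning above breaks down and one genuinely needs the Bessel structure. The natural route is a Yamada--Watanabe-type pathwise stability estimate for the squared Bessel SDE, combined with the fact that for dimension $\le0$ the process is absorbed at $0$ within a short time once it becomes small, which can be quantified through the explicit law of the hitting time (namely $\tfrac{x^2}{2\kappa_n\Gamma}$ with $\Gamma$ a Gamma$(1-\delta_n/2)$ variable, converging in law to $\tfrac{x^2}{8\Gamma_0}$ with $\Gamma_0$ exponential).

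Finally, to pass from pointwise to locally uniform convergence of $\sigma_n$ I would use monotonicity together with continuity of the limit. For $0<x<y$ the reverse Loewner ODE preserves order, so $f^t_n(x)<f^t_n(y)$ up to the absorption of $x$, whence $\sigma_n(x)<\sigma_n(y)$; thus each $\sigma_n$ is increasing on $[0,\infty)$ and decreasing on $(-\infty,0]$, and the same holds for $\sigma$, which is moreover a.s.\ continuous (for $\kappa=4$ the reverse $\SLE_4$ curve is a.s.\ a continuous simple curve, and its footprint on $\R$ grows continuously and strictly). Pointwise convergence of a sequence of monotone functions to a continuous limit is automatically uniform on compacts (a Dini/P\'olya-type argument), and no mass escapes to infinity since $\sup_{|x|\le R}\sigma_n(x)=\sigma_n(R)\vee\sigma_n(-R)$ converges. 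This gives (ii) and completes the argument.
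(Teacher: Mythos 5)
Your reduction to proving $\sigma_n(x)\to\sigma(x)$ pointwise, and then upgrading to locally uniform convergence via monotonicity in $x$ plus continuity of $\sigma$, is sound, and your part (i) coincides with the paper's (which simply cites \cite[Proposition 6.1]{Kem17}). The problem is in part (ii). You correctly isolate the crux --- showing $\limsup_n\sigma_n(x)\le\sigma(x)$ --- but you explicitly leave it unresolved, describing it as ``the delicate point'' and ``the main obstacle,'' and only sketch a route through a Yamada--Watanabe-type stability estimate for squared Bessel SDEs plus the explicit law of the hitting time. That is a genuine gap: without completing this estimate the pointwise convergence is not established, and the Dini/P\'olya step that follows has nothing to act on.

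The paper closes that gap with a different and much cleaner device, which you miss because you work with $Z^n_t=f^t_n(x)$ directly. Instead, rescale the spatial variable by $\sqrt{\kappa_n}/2$: set $h^t_n(x)=(\sqrt{\kappa_n}/2)^{-1}f^t_n(\tfrac{\sqrt{\kappa_n}}{2}x)$. In the shared coupling $W^n=\sqrt{\kappa_n}B$, $W=2B$, this transforms the SDE so that all the $h^t_n(x)$ and $f^t(x)$ are driven by the \emph{same} martingale term $-2\,dB_t$, and differ only through the deterministic drift coefficient $-2(4/\kappa_n)/h$, which is monotone in $n$ since $4/\kappa_n\downarrow1$. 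By a standard comparison, $h^t_m(x)\le h^t_n(x)\le f^t(x)$ for $m<n$ while they stay positive, so the rescaled hitting times $\sigma_n^*(x):=\sigma_n(\tfrac{\sqrt{\kappa_n}}{2}x)$ are a.s.\ increasing in $n$ and bounded above by $\sigma(x)$; the hard $\limsup$ inequality is therefore automatic. The remaining direction $\lim_n\sigma_n^*(x)\ge\sigma(x)$ is then exactly the soft localized Gr\"onwall contradiction argument you gave (and the paper uses Dini in $n$ rather than P\'olya in $x$, though both work once pointwise convergence holds, and both must be followed by a small argument converting uniform convergence of $\sigma_n^*$ back into uniform convergence of $\sigma_n$). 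I would recommend either adopting this rescaling, or else actually carrying out the Bessel hitting-time estimate you sketched; as written, the proof is incomplete.
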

	
	\begin{proof} 
		For the proof we couple together $((f_n)_{n\in \N},f)$, by setting their driving functions equal to $((\sqrt{\kappa_n}B)_{n\in \N},2B)$, where $B$ is a single standard Brownian motion. Then by \cite[Proposition 6.1]{Kem17} we have that $f_n\to f$ uniformly a.s.\ on $[0,T]\times \{\HH+\im\eps\}$, for any $T,\eps>0$. 
		
		To show the convergence of $\sigma_n$ (as in Definition \ref{def::cart_conv}), we define $(h^t_n(x))_{t\ge 0}:=((\frac{\sqrt{\kappa_n}}{2})^{-1}f_n^t(\frac{\sqrt{\kappa_n}}{2}x))_{t\ge 0}$ for each $n$ and $x\in \RR$ so that
		\begin{align}\label{eqn::monotone_bessel}
		dh^t_n(x) & = \frac{-2(\frac{4}{\kappa_n})}{h^t_n(x)}\, dt - 2\,dW_t \;\text{ for all } t\le \sigma_n^*(x); & h^0_n(x)=x,\nonumber \\ 
		df_t(x) & =  \frac{-2}{f_t(x)}\, dt - 2\,dW_t  \;\;\;\;\;\text{ for all } t\le \sigma(x); & f_0(x)=x,
		\end{align}
		where $\sigma^*_n(x):=\sigma_n(\frac{\sqrt{\kappa_n}}{2}x)$. 
		
		We will first show that $\sigma_n^*\to \sigma$ uniformly a.s.\ on compacts of time. Observe that the coupled equations \eqref{eqn::monotone_bessel} imply that for any fixed $x\in \RR$, $\sigma_n^*(x)$ is a.s.\ increasing in $n$ and bounded above by $\sigma(x)$, so has some a.s.\ limit $\sigma^*(x)\le \sigma(x)$. In fact, it holds that $\sigma^*(x)=\sigma(x)$ a.s. To see this, without loss of generality assume that $x\ge 0$ and suppose for contradiction that $\sigma(x)>\sigma^*(x)$. This means that for some $\eps>0$ we have $f^t(x)\ge \eps$ for all $t\le \sigma^*(x)$. Define $\sigma^\eps_n(x)$ to be the first time that $h_n^t(x)\le \eps/2$ for each $n$, so that: \begin{compactitem}
			\item $\sigma^\eps_n(x)\le \sigma_n^*(x)\le \sigma^*(x)$ for all $n$; and 
			\item $h^t_n(x),f^t(x)\ge \eps/2$ for all $t\le \sigma_n^\eps(x)$ and all $n$.
		\end{compactitem} Then \eqref{eqn::monotone_bessel}, plus Gr\"onwall's inequality applied to the function $h_n^t-f^t$, implies that 
		$|h_n^{\sigma_n^\eps(x)}(x)-f^{\sigma_n^\eps(x)}(x)|\to 0$ 
		as $n\to \infty$. This is a contradiction, since the first term in the difference is equal to $\eps/2$ by definition, and the second should always be greater than $\eps$.
		
		For any $K>0$, this argument then gives the existence of a probability one event $\Omega_0$, on which we have $\sigma_n^*(q)\to \sigma(q)$ for all $q\in \Q\,\cap\,[0,K+1]$. Since $\sigma$ and $(\sigma_n)_{n\in \N}$ are defined from reverse $\SLE_\kappa$ curves, we may also assume that $\sigma$ and $(\sigma_n)_{n\in \N}$ are continuous on $\Omega_0$. 
		So now, suppose we are working on $\Omega_0$, and take any $x\in [0,K]$. Let $q_k^-\uparrow x$ and $q_k^+\downarrow x$ with $q_k^{\pm} \in \Q\cap[0,K+1]$ for every $k$, so that $\sigma_n^*(q_k^-)\le \sigma_n^*(x)\le \sigma_n^*(q_k^+)$ for every $n$, and $\sigma^*_n(q_k^-)\uparrow \sigma(q_k^-)$, $\sigma^*_n(q_k^+)\uparrow \sigma(q_k^+)$ as $n\to \infty$ for every $k$. This means that $\sigma^*_n(x)$ is a bounded sequence, and any converging subsequence has limit lying between $\sigma(q_k^-)$ and $\sigma(q_k^+)$ for every $k$. Since $\sigma$ is continuous, this implies that any such subsequential limit must be equal to $\sigma(x)$, and so in fact, it must be that $\sigma_n^*(x)\to \sigma(x)$. To summarise, on this event $\Omega_0$ of probability one, we have that: $\sigma^*_n\to \sigma$ pointwise on $[0,K]$; $\sigma^*_n(x)$ is increasing in $n$ for every $x\in [0,K]$; and the functions $\sigma$ and $(\sigma^*_n)_{n\in \N}$ are continuous. These are exactly the conditions of Dini's theorem, and so we may deduce that $\sigma_n^*\to \sigma$ uniformly on $[0,K]$ a.s.\
		
		To finish the proof, it is enough to show that for $K'$ arbitrary, the quantity $\sup_{x\in [0,K']}|\sigma_n(x)-\sigma(x)|$ converges to $0$ a.s.\ as $n\to \infty$. Suppose without loss of generality that $\kappa_n\ge 2$ for all $n$. Then setting $K=2K'$ in the previous paragraph, one deduces the existence of a probability one event $\Omega_1$, on which $\sup_{y\in [0,2K']}|\sigma_n^*(y)-\sigma(y)|\to 0$ as $n\to \infty$ and $\sigma$ is continuous. Then we have
		\eqbn\begin{split}
			\sup_{x\in [0,K']}|\sigma_n(x)-\sigma(x)|  
			&= \sup_{x\in[0,K']} |\sigma_n^*((2/\sqrt{\kappa_n})\, x)-\sigma(x)|  \\
			&\leq \sup_{x\in[0,K']} |\sigma_n^*((2/\sqrt{\kappa_n})\, x)-\sigma((2/\sqrt{\kappa_n})\, x)| 
			+ |\sigma((2/\sqrt{\kappa_n})\, x)-\sigma(x)|\\
			&\leq \sup_{y\in[0,2K']} |\sigma_n^*(y)-\sigma(y)| 
			+ \sup_{x\in[0,K']}|\sigma((2/\sqrt{\kappa_n})\, x)-\sigma(x)|,
		\end{split}
		\eqen	
		and on $\Omega_1$, the final expression goes to $0$. This completes the proof.
	\end{proof}
	
	\section{The $(2,2)$-wedge via ``zooming in'' at quantum-typical point}
	\label{sec:zoom}
	
	The main goal of this section is to prove Proposition \ref{prop:zoom} below. This proposition illustrates why the $(2,2)$-quantum wedge is a particularly natural quantum surface,  and will also be important in our proof of Theorem \ref{thm1}. Before we state this proposition, we briefly define the relevant notion of convergence for $\gamma$-LQG surfaces. Let $\cS_n$ for $n\in\N$ and $\cS$ be doubly-marked $\gamma$-quantum surfaces with the topology of $\HH$. We say that $\cS_n$ converges to $\cS$ in the sense of doubly-marked $\gamma$-quantum surfaces if we can find parametrisations $(D,h_n,a,b)$ and $(D,h,a,b)$ of $\cS_n$ and $\cS$, respectively, with $D\subsetneq \C$ and $a,b\in\ol D$, such that for any open and bounded $U\subset D$, $h_n|_U$ converges to $h|_U$ in  $H^{-1}(U)$.\footnote{We remark that convergence of quantum surfaces is defined somewhat differently in \cite{Sh16} and \cite{DMS18+} than in the current paper. In \cite{Sh16} one embeds the surfaces such that the field $h_n$ gives unit mass to the unit half-disk for all $n$, and the surfaces are said to converge if, restricted to any bounded subset of $\HH$, the area measures $\mu^\gamma_{h_n}$ associated with the $h_n$ converge weakly to the area measure $\mu^\gamma_h$ associated with $h$. 
		In \cite{DMS18+} one embeds the surfaces with the unit circle embedding and requires that the fields $h_n$ converge as distributions to $h$. 
		However, the exact notion of convergence considered does not play an important role in this paper, and the convergence results we prove also hold for the alternative notions of convergence considered in \cite{Sh16} and \cite{DMS18+}.}
	\begin{propn} 
		Let $D\subset\HH$ be a simply connected domain such that $\partial D\cap \RR$ contains an interval of positive length. Furthermore, assume there exists a conformal map $\phi:D\to\HH$ such that the derivative $\phi'$ extends continuously to $\partial D\cap\RR$ and is non-zero on $D\cup
		\{\partial D\cap\RR\}$. Let $h$ be an instance of the GFF with continuous Dirichlet boundary conditions on $\partial D\setminus\RR$ and free boundary conditions on $\partial D\cap \RR$. Let $I=(a,b)\subset\partial D\cap\RR$ be a bounded interval, and let $z_0$ be an arbitrary fixed point of $\partial D\setminus I$. Finally, sample $z$ uniformly from $\nu_h$ restricted to $I$ (renormalised to be a probability measure).
		
		Then as $C\rta\infty$, conditioned on the location of $z$ and on $\nu_h([a,z]),\nu_h([z,b])$, the random quantum surface $(D,h+C,z,z_0)$ converges  in law \en{with probability one with respect to $z,\nu_h([a,z]),\nu_h([z,b])$?} (in the sense of doubly-marked $2$-quantum surfaces) to a $(2,2)$-quantum wedge. 
		\label{prop:zoom}
	\end{propn}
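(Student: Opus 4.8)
The plan is to reduce the claim to a statement about the Neumann GFF in the upper half-plane, and then reuse the structure already exploited in the proof of Lemma \ref{lem::defnlmqw}. First I would use the conformal map $\phi:D\to\HH$ to transport everything to $\HH$: writing $\wt h=h\circ\phi^{-1}+2\log|(\phi^{-1})'|$, the surface $(D,h+C,z,z_0)$ equals (as an equivalence class) $(\HH,\wt h+C,\phi(z),\phi(z_0))$, and since $\phi'$ extends continuously and non-vanishingly to $\partial D\cap\RR$, the measure $\nu_h$ on $I$ pushes forward to $\nu_{\wt h}$ on $\phi(I)$ with a density bounded above and below. Hence sampling $z\sim\nu_h|_I$ corresponds to sampling a point from $\nu_{\wt h}$ restricted to a bounded real interval, and it suffices to prove the proposition when $D=\HH$ and $h$ is (a field absolutely continuous with respect to) a Neumann GFF; the Dirichlet part of the boundary data is harmonic near the sampled point and contributes only a continuous shift which is absorbed into the additive constant as $C\to\infty$.

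Next, I would invoke the rooted-measure description. Weighting the law of the Neumann GFF $h'$ by $\nu_{h'}([-1,1])$ and sampling the root $z$ from $\nu_{h'}$ on $[-1,1]$, then re-centring $\wt h=h'(\cdot+z)$, one obtains — exactly as recalled in the proof of Lemma \ref{lem::defnlmqw}, via \cite[\S 6.3]{DS11} in the subcritical case and its critical analogue from \cite{Pow18chaos,APS18two} — that the radial part $(\wt h_\rad(s))_{s\ge 0}$ evolves as a Brownian motion \emph{with the $\gamma=2$ drift}, i.e.\ after the appropriate limiting/Girsanov-type computation one gets $(\wt h_\rad(e^{-s})-\wt h_\rad(1))_{s\ge0}\eqD (B_{2s}+2s)_{s\ge0}$, while the lateral part $\wt h_\cir$ has the law of $h_\cir^{\GFF}$ and is independent. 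The key extra input for $\gamma=2$ is that this drift is $2s=Q_2 s$, which is precisely the borderline drift; crucially, conditioning a Brownian motion with drift $2s$ to stay below the line $Q_2 s=2s$ "forever" is a degenerate conditioning, and the standard way to make sense of it (Doob $h$-transform / Williams' path decomposition, as cited via \cite{RV-tail-GMC,Wil74} in Lemma \ref{prop:strip}) turns the conditioned process into $-\Bessel_{2s}+2s$ for a $3$-dimensional Bessel process $\Bessel$ — which is exactly the radial description of the $(2,2)$-quantum wedge in Definition \ref{def:critical-wedge}. So the content is: running the radial process from its re-centred starting value, taking the last time $T$ it exits below $s\mapsto 2s$, applying the dilation $z\mapsto e^{-T}z$ and subtracting the corresponding maximum $M=\sup_{s\ge0}(\wt h_\rad(s)-2s)$ produces a field whose radial part is the conditioned (Bessel) process and whose lateral part is $h_\cir^{\GFF,\strip}$; sending $C\to\infty$ exactly implements "forget where you started and look near the root", which is what makes $T$ finite and the last-exit decomposition valid.

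More carefully, I would argue as follows. Conditioning on $z$ and on $\nu_h([a,z]),\nu_h([z,b])$ is (after the conformal transport and re-centring) a conditioning on the root location and on the two quantum half-lengths; I claim that adding the constant $C$ and letting $C\to\infty$ washes out the influence of this conditioning on the surface viewed as an equivalence class, because the surface only remembers the field modulo the coordinate change \eqref{eqn:coc}, and shifting $h$ by $C$ together with the dilation $z\mapsto e^{-C/Q_2}z$ is an equivalence. Concretely, on the strip $\strip$ the added constant simply translates the parametrisation, and in the last-exit parametrisation the surface $(\strip,\wt h+C,\infty,-\infty)$ has last-exit point tending to $-\infty$ as $C\to\infty$, so in the limit the negative-time part of the radial process becomes an unconditioned Brownian motion $\wh B_{-2s}+2s$ on $s\le0$ while the positive-time part is the conditioned (Bessel) process — matching Definition \ref{def:critical-wedge} term by term, with the lateral part unchanged and independent. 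The convergence is in the sense of doubly-marked $2$-quantum surfaces, which after transporting to a fixed domain amounts to $H^{-1}_{\op{loc}}$-convergence of the fields; this follows from the a.s.\ convergence of the radial and lateral components established above together with the fact that the maps $z\mapsto e^{-T}z$ are fixed conformal maps of $\HH$. The "probability one with respect to $z,\nu_h([a,z]),\nu_h([z,b])$" clause should be read as: for a.e.\ realisation of these conditioning data, the conditional law of $(D,h+C,z,z_0)$ converges to the (deterministic) law of the $(2,2)$-quantum wedge; this I would extract by noting that the limiting law does not depend on the conditioning data at all, so it is enough to check convergence of the \emph{un}conditioned weighted law and then apply a standard disintegration argument.

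\textbf{Main obstacle.} The delicate point is the $\gamma=2$ radial description and the passage from "Brownian motion with drift $2s$ conditioned to stay below $2s$" to "$-\Bessel_{2s}+2s$". In the subcritical case $\alpha=\gamma<Q_\gamma$ this conditioning is a genuine (positive-probability in the limit) event handled by an elementary Girsanov computation; at $\gamma=2$ the drift equals the critical slope, the conditioning event becomes degenerate, and one must instead identify the correct $h$-transform. I expect the cleanest route is to work on the strip and quote Williams' path decomposition for Brownian motion (as the paper already does in Lemma \ref{prop:strip}, citing \cite{RV-tail-GMC,Wil74}), together with the critical-measure rooted-field computation from \cite{Pow18chaos} and the subcritical-to-critical comparison of \cite{APS18two} (our Lemma \ref{lem::cd_critwedge}); the latter is the rigorous substitute for the "Girsanov shift" that no longer applies directly, as flagged in the outline. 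The secondary technical nuisance — verifying that the truncation/conditioning on $\nu_h([a,z]),\nu_h([z,b])$ genuinely disappears in the limit and that everything is measurable enough for the disintegration — is routine given Remark \ref{rmk:propslm} (atomlessness and full support of $\nu_h$), but must be stated carefully because of the infinite-expectation issue highlighted in the introduction.
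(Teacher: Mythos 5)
Your high-level picture is right in places (transport via $\phi$, reduction to a rooted-field description, Williams' path decomposition, last-exit parametrisation being the thing that makes $C\to\infty$ work), but the core of the argument as you've written it would not go through, for three concrete reasons.

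\textbf{First, you cannot weight by $\nu_h$.} Your plan is to weight the law of $h'$ by $\nu_{h'}([-1,1])$ and re-centre; but for $\gamma=2$ the critical boundary measure has \emph{infinite} expectation on every interval, so there is no way to normalise the weighted measure, and the argument from \S 6.3 of \cite{DS11} simply does not apply. The paper's proof is built around exactly this obstruction: it replaces $\nu_h$ by the truncated derivative measures $d^\beta_{h,\eps}$ of \eqref{eq:d-beta-eps}, proves their uniform integrability (Lemma \ref{lem::d-ui}), carries out the rooted-field analysis for the $\beta$-truncated measure (Lemmas \ref{lem:rooted1}--\ref{lem:zoom0more}), and only at the very end sends $\beta\to\infty$ using Lemma \ref{lem::d-nu} and Remark \ref{rmk::dbetanu}. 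Your proposal has no substitute for this truncation step.

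\textbf{Second, the rooted-field radial description you write down is wrong.} You claim that after re-centring, the radial part satisfies $(\wt h_\rad(e^{-s})-\wt h_\rad(1))_{s\ge0}\eqD(B_{2s}+2s)_{s\ge0}$ and that a subsequent degenerate conditioning (to stay below the line of slope $Q_2=2$) must then be interpreted as an $h$-transform giving a Bessel process. That is not what happens. The weighting is by the \emph{derivative-type} measure $(-h_\eps/2+\log(1/\eps)+\beta)\e^{h_\eps}\eps\,dz$, whose multiplicative prefactor is itself the $h$-transform density: Lemma \ref{prop:rn} shows that weighting by the martingale $M_t=(-B_t+\gamma\alpha t+\beta)\1_{\{\cdots\}}e^{\gamma B_t-\frac{\gamma^2}{2}\alpha t}$ turns the radial part \emph{directly} into $-\Bessel_{2s}+2s+\beta$ for a Bessel(3) process $\Bessel$. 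There is no intermediate ``$B_{2s}+2s$ then condition'' step; that intermediate object is not even well-defined, because the conditioning event has probability zero and the $h$-transform you need is precisely the one supplied by the prefactor of the critical measure. Williams' decomposition (\cite{Wil74}) then enters exactly as in the paper's Lemma \ref{prop:zoom0}: not to turn a conditioned Brownian motion into a Bessel, but to decompose the Bessel process around its post-$C$ minimum and identify the limit, as $C\to\infty$, with the two-sided radial process of Definition \ref{def:critical-wedge}.

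\textbf{Third, the disintegration step is not ``standard''.} You assert that because the limiting law does not depend on the conditioning data, it suffices to verify convergence of the unconditioned (weighted) law. In general, convergence in distribution of a pair $(X_n,Y_n)\Rightarrow(X,Y)$ does \emph{not} imply convergence of the conditional laws $X_n\,|\,Y_n=a\Rightarrow X\,|\,Y=a$, even for a.e.\ $a$. The paper needs the conditional statement (on $z$, $\nu_h([a,z])$, $\nu_h([z,b])$) because in the proof of Theorem \ref{thm1} the two sampled points $\cX,\cY$ are coupled through the boundary measure, and that coupling must survive the limit. The paper therefore proves a bespoke total-variation lemma (Lemma \ref{prop:cond}), obtains the result first with conditioning on the truncated lengths $d_h^\beta([a,z]),d_h^\beta([z,b])$ (Lemma \ref{lem:zoom0more}, itself proved by adapting \cite[Proposition 5.5]{Sh16}), and then passes to $\nu_h$ via the $\cC_\beta$ events. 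Nothing in your sketch supplies the total-variation control needed to make the disintegration work.

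In short: the conformal transport, the appeal to Williams' decomposition, and the role of the last-exit parametrisation as $C\to\infty$ are all recognisably the right ideas, and you correctly flag the $h$-transform as the crux. But the actual mechanism — truncation to restore integrability, the derivative-martingale prefactor as the source of the $h$-transform rather than a degenerate conditioning, and the total-variation argument for the conditional limit — is missing, and these are precisely the parts of the paper's proof that go beyond the subcritical template.
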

	
	\begin{remark0}
		\en{Note that the above is a statement about the law of a quantum surface conditionally on several quantities. The same statement holds unconditionally, but we need the stronger statement for the proof of Theorem \ref{thm1}. Let us now briefly explain why.} 
		
		\en{Theorem \ref{thm1} says that when we cut a $(2,1)$-quantum wedge with an independent SLE$_4$, the surfaces on either side are independent $(2,2)$-quantum wedges. For the proof the idea is to make use of the stationary critical zipper, Theorem \ref{thm:criticalzipper}. This can be used (by ``zipping down" the SLE$_4$ some amount of quantum length) to say that the law of two surfaces in question are the same as the law as the surfaces $\lim_{C\to \infty}(\HH,h+C,\cX,\infty)$ and $\lim_{C\to \infty}(\HH,h+C,\cY,\infty)$ where $\cX,\cY$ are two quantum-typical points at equal quantum distance to the right and left of 0. See Proposition \ref{prop:zoom-twopoints} and the proof of Theorem \ref{thm1} below. In particular $\cX$ and $\cY$ depend on one another via the quantum boundary length measure. It is therefore important to know that the local convergence to a quantum wedge described in Proposition \ref{prop:zoom} holds even given this information.}
	\end{remark0}
	
	We first prove a lemma that says, roughly speaking, that convergence of the type considered in Proposition \ref{prop:zoom} only depends on the local behaviour of the field $h$ around the point $z$. This will be useful several places in what follows.
	\begin{lemma}
		Consider the setting of Proposition \ref{prop:zoom}, but now with arbitrary $h\in \Hloc(\wh D)$, where
		$\wh D\subset\HH$ is a simply connected domain containing $D$. Assume further that the boundary measure $\nu_h$ is well-defined on $I$ (as in Lemma \ref{lem::defnlm}), and a.s.\ assigns positive and finite mass to every subinterval of $I$ with strictly positive length. Finally, let $\wh z_0$ be an arbitrary fixed point on $\partial\wh D\setminus I$. Then the following statements are equivalent: 
		\vspace{0.1cm}
		\begin{compactitem}
			\item[(i)] conditioned on the location of $z$, and on $\nu_h([a,z]),\nu_h([z,b])$, the random quantum surface \\ $(D,h|_D+C,z,z_0)$ converges in law to a $(2,2)$-quantum wedge as $C\rta\infty$; 
			\item[(ii)]  conditioned on the location of $z$, and on $\nu_h([a,z]),\nu_h([z,b])$, the random quantum surface \\ $(\wh D,h+C,z,\wh{z}_0)$ converges in law to a $(2,2)$-quantum wedge as $C\rta\infty$.
		\end{compactitem}
		\label{prop:wedge-transform}
	\end{lemma}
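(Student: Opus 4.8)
The plan is to transport both surfaces to the infinite strip $\strip$ with marked points at $\pm\infty$, via conformal maps sending $z$ to $+\infty$ and the far marked point to $-\infty$, and then to observe that adding $C$ and re-centring to the last-exit parametrisation yields a random field in $\Hloc(\strip)$ whose law, as $C\to\infty$, depends only on the germ of $h$ at $z$. Throughout I would work with last-exit parametrisations; by the remarks following Proposition~\ref{prop:zoom} the precise notion of convergence is immaterial, so it is enough to prove (i)$\Leftrightarrow$(ii) with ``converges in law to a $(2,2)$-wedge'' read as ``the last-exit parametrisation, as a random element of $\Hloc$, converges in law to that of a $(2,2)$-wedge''. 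As a first step: the regularity of $\phi$ forces $\partial D$ to coincide with $\RR$ in a neighbourhood of $z$, so there is $r_0>0$ with $B_{r_0}(z)\cap\HH\subset D$; consequently $z\in\partial\wh D$ with $\partial\wh D$ flat near $z$, and by standard boundary regularity of conformal maps there are conformal maps $\psi_D\colon D\to\HH$ and $\psi_{\wh D}\colon\wh D\to\HH$, each extending conformally with non-vanishing derivative past a neighbourhood of $z$, with $\psi_D(z)=\psi_{\wh D}(z)=0$ and $\psi_D(z_0)=\psi_{\wh D}(\wh z_0)=\infty$. Pushing forward, (i) becomes ``$(\HH,\tilde h+C,0,\infty)\to(2,2)$-wedge'' for $\tilde h:=h\circ\psi_D^{-1}+2\log|(\psi_D^{-1})'|\in\Hloc(\HH)$, and (ii) becomes the same with $\check h:=h\circ\psi_{\wh D}^{-1}+2\log|(\psi_{\wh D}^{-1})'|$; a direct computation (using $h|_D=h$ and $\psi_D^{-1}(\HH)=D\subset\wh D$) gives $\tilde h=\check h\circ\beta+2\log|\beta'|$ on $\HH$, where $\beta:=\psi_{\wh D}\circ\psi_D^{-1}$ is conformal with $\beta(0)=0$, boundary-regular near $0$, and $\beta'(0)>0$; moreover $\nu_{\tilde h}$ and $\nu_{\check h}$ are non-degenerate near $0$.

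Next I would pass to the strip via the map \eqref{eqn:striptoH}, which sends $+\infty\mapsto 0$. Writing $\mathsf h_1,\mathsf h_2$ for the strip representatives of $\tilde h,\check h$, the last-exit parametrisation (Remark~\ref{rmk::stripwedge}) turns ``$(\HH,\tilde h+C,0,\infty)\to(2,2)$-wedge'' into ``$\mathsf h_1(\cdot+T^{(1)}_C)+C\to\mathsf h^{W}$ in law in $\Hloc(\strip)$'', where $\mathsf h^W$ is the strip $(2,2)$-wedge field and $T^{(i)}_C:=\sup\{s\colon(\mathsf h_i)_{\rad}(s)=-C\}$ is the shift restoring the condition that the radial part last hits $0$ at $0$; one checks that whenever this convergence can hold, $T^{(i)}_C\to+\infty$. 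The decisive point is that in strip coordinates $\beta$ becomes a map $\mathsf b$ with $\mathsf b(z')=z'-\ell+O(e^{-z'})$ and $\mathsf b'(z')=1+O(e^{-z'})$ as $\Re z'\to+\infty$, where $\ell:=\log\beta'(0)\in\RR$; hence $\mathsf h_1(z')=\mathsf h_2\big(z'-\ell+O(e^{-z'})\big)+O(e^{-z'})$ for $\Re z'$ large. Averaging over the segments $\{s\}\times[0,\pi]$, the radial parts satisfy $(\mathsf h_1)_{\rad}(s)=(\mathsf h_2)_{\rad}(s-\ell)+R(s)$ with $R(s)=O(e^{-s})$ (with moments, uniformly for $s$ large), which gives $T^{(1)}_C=T^{(2)}_C+\ell+o(1)$ as $C\to\infty$ --- here I would use that the radial part a.s.\ makes its last crossing of the level $-C$ transversally, so that an $O(e^{-s})$ perturbation shifts this crossing by $o(1)$.

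Combining these, for $z'$ in any fixed bounded subset of $\strip$ and $C$ large,
\[ \mathsf h_1(z'+T^{(1)}_C)+C=\mathsf h_2\big((z'+\delta_C)+T^{(2)}_C\big)+C+\varepsilon_C, \]
with $\delta_C=T^{(1)}_C-\ell-T^{(2)}_C+O(e^{-(\Re z'+T^{(1)}_C)})\to0$ and $\varepsilon_C=O(e^{-(\Re z'+T^{(1)}_C)})\to0$, both in probability and uniformly for $z'$ bounded (using $T^{(1)}_C\to+\infty$). Since translation by a vanishing amount is continuous on $\Hloc(\strip)$ and the additive error $\varepsilon_C$ tends to $0$, a Slutsky-type argument shows that the two families $\mathsf h_1(\cdot+T^{(1)}_C)+C$ and $\mathsf h_2(\cdot+T^{(2)}_C)+C$ have the same subsequential limits in law; in particular one converges to $\mathsf h^W$ iff the other does. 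Transporting this back through \eqref{eqn:striptoH} and the maps $\psi_D^{-1},\psi_{\wh D}^{-1}$ yields (i)$\Leftrightarrow$(ii) with equal limits. The conditioning on the location of $z$ and on $\nu_h([a,z]),\nu_h([z,b])$ does not interfere: these are measurable with respect to $h$ near $\overline I$ --- which lies in $D\subset\wh D$ --- so they agree whether computed from $(D,h|_D)$ or $(\wh D,h)$, and every conformal map above depends only on $z$; hence the same argument applies to the conditional laws.

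I expect the main obstacle to be the washout of the non-linear part of the gluing map $\beta$ under the zoom-in, i.e.\ the last two steps above. The clean way around it is the strip-coordinate identity $\mathsf b(z')=z'-\ell+O(e^{-z'})$: a conformal map fixing the marked boundary point is a translation up to an error that decays exponentially in the distance to that point, and this error is killed by letting $C\to\infty$ (equivalently $T^{(i)}_C\to+\infty$). The remaining work --- making the $O(e^{-s})$ bounds uniform with the needed moments, controlling the $o(1)$-displacement of the last-exit time $T^{(i)}_C$ (which requires the transversality of the last crossing of the level $-C$), and the Slutsky step in $\Hloc(\strip)$ --- is routine but needs some care.
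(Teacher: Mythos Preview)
Your approach and the paper's coincide at the key point: the conformal change of coordinates between the two parametrisations (your $\beta=\psi_{\wh D}\circ\psi_D^{-1}$, the paper's $\phi:\HH\to D$) fixes the marked boundary point with non-vanishing derivative, so under the zoom-in it becomes the identity. In your strip coordinates this reads $\mathsf b(z')=z'-\ell+O(e^{-z'})$; the paper stays in $\HH$ and writes the same fact as $\wt\phi_C:=\psi_C^{-1}\circ\phi\circ\psi_C\to\operatorname{id}$ uniformly on compacts, where $\psi_C(w)=r_Cw$ with $r_C\to 0$. Both proofs then push the (near-)limiting field through this near-identity map via an $H^{-1}$/Slutsky estimate.

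Where you diverge is in insisting on the last-exit parametrisation for \emph{both} fields, which forces the claim $T^{(1)}_C=T^{(2)}_C+\ell+o(1)$. You call the needed ``transversality of the last crossing'' routine, but it is the one genuinely delicate step in your outline: convergence in law of the recentred radial part to the negative of a Bessel$(3)$ does not by itself yield pathwise stability of the last-exit time under an $O(e^{-s})$ perturbation of the level --- one would need a Skorokhod coupling plus an argument ruling out near-touches of $0$ at both small and large times. The paper sidesteps this entirely by reusing the \emph{same} scaling $\psi_C$ for both fields: assuming (ii), it takes the $\psi_C$ that makes $\wt h\circ\psi_C+2\log|\psi_C'|+C$ converge, and shows that with that very $\psi_C$ the field $\wt h\circ\phi\circ\psi_C+2\log|(\phi\circ\psi_C)'|+C$ also converges to the wedge field. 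In your coordinates this amounts to using the shift $T^{(2)}_C+\ell$ for $\mathsf h_1$ rather than its own $T^{(1)}_C$. Since convergence of doubly-marked quantum surfaces only asks that \emph{some} parametrisation converge (not the last-exit one specifically), this suffices and eliminates the transversality step altogether. With that one simplification, the two arguments are the same.
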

	\begin{proof}
		We may assume without loss of generality that $h\in H^{-1}(\wh D)$ (rather than $h\in\Hloc(\wh D)$) since the field of a $(2,2)$-quantum wedge restricted to any bounded set is in $H^{-1}(\wh D)$, so the considered fields must be in $H^{-1}(U)$ for some neighbourhood $U$ around $z$ in order for the assumed convergence to hold. We may also assume without loss of generality that $\wh D=\HH$ and $\wh{z}_0=\infty$. Consider a conformal map $\phi:\HH\to D$ sending $0 \mapsto z$ and $\infty \mapsto z_0$. Without loss of generality, upon replacing $\phi$ by $\phi(c\, \cdot)$ for an appropriate $c>0$, we may assume that $\phi'(0)=1$. We only prove that (ii) implies (i), since the other direction can be verified by a similar argument. 
		
		Suppose that (ii) holds, and write $\wt h$ for a random element of $H^{-1}(\HH)$, with the law of $h(\cdot + z)$ conditionally on $(z,\nu_h([a,z]),\nu_h([z,b]))$. Then for every $C>1$ there exists a random conformal map $\psi_C:\HH\to \HH$ of the form $w\mapsto r_C\,w$ for $r_C>0$, such that $\wt h\circ\psi_C+2\log|\psi'_C|+C$ converges in law in $H^{-1}(\HH)$ as $C\to \infty$, to the field described in Definition \ref{def:critical-wedge}. Note that $r_C\to 0$ as $C\to \infty$ since when $C\rta\infty$ the measure assigned to any fixed boundary segment by $\wt h\circ\psi_C+2\log|\psi'_C|+C$ goes to infinity, while the measure assigned to (say) $[-1,1]$ by the field in Definition \ref{def:critical-wedge} is of order 1. 
		
		By the definition of convergence for doubly-marked 2-LQG surfaces, in order to prove (i) it is sufficient to show convergence of the following quantum surface to a $(2,2)$-quantum wedge:
		$$
		(\HH,\wt h \circ\phi +2\log|\phi'|+C,0,\infty),
		$$
		where we note that the field depends only on the restriction of $h$ to $D$).
		Equivalently, letting $h_\w$ denote the field in Definition \ref{def:critical-wedge}, it is sufficient to show the existence of maps $\wt\psi_C:\HH\to\HH$ of the same form as $\psi_C$ such that the convergence in law
		\eqb
		\wt h\circ\phi\circ\wt\psi_C +2\log|\phi'\circ\wt\psi_C|+2\log|\wt\psi'_C|+C\Rightarrow h_\w.
		\label{eq:transformed-field}
		\eqe
		holds in $H^{-1}(\HH)$ as $C\to \infty$.
		We will show that this in fact holds with $\wt\psi_C=\psi_C$. 
		
		To do this, we set $\wt h_C := \wt h\circ \psi_C+2\log|\psi_C'|+C$ and rewrite the left-hand side of \eqref{eq:transformed-field} as 
		\[ \wt h_C \circ \psi_C^{-1}\circ \phi \circ \psi_C + 2\log|\phi'\circ \psi_C|, \]
		where we can immediately note (since $\phi'(0)=1$, $\phi'$ is continuous, and $r_C\rta 0$) that the second term converges to 0 in distribution as $C\to \infty$. Furthermore, $\wt h_C$ is equal in distribution to $h_\w+g_C$ where  $g_C\Rightarrow 0$ in $H^{-1}(\HH)$ as $C\rta\infty$. 
		Defining $\wt\phi_C=\psi_C^{-1}\circ\phi\circ\psi_C$, in order to conclude the proof it is therefore sufficient to show that
		\eqbn
		(i)\,\,h_\w\circ\wt\phi_C\Rightarrow h_\w,\qquad 
		(ii)\,\,g_C\circ\wt\phi_C\Rightarrow 0,
		\eqen
		as $C\rta\infty$, whenever $(h_\w, \wt \phi_C)$ and $(g_C,\wt \phi_C)$ are coupled such that the marginal laws of $h_\w, g_C$ and $\wt \phi_C$ are as in the discussion above. Observe that $\wt\phi_C-z$ and its first derivatives converge to $0$ in probability, uniformly on compact subsets of $\HH\cup\RR$ as $C\rta\infty$. 
		
		Let $\mathcal{F}:= \{f\in H_0(\HH) \, : \, \|f\|_\nabla=1\}$ and recall that for an arbitrary $g\in H^{-1}(\HH)$, its $H^{-1}(\HH)$ norm is defined by
		\eqbn
		\|g\|_{H^{-1}(\HH)} = \sup\{ \en{( g,f )}\,:\,f\in \cF \}.
		\eqen
		To prove (ii), first note that
		\eqbn
		\|f\circ\wt\phi_C^{-1}\|_\nabla
		=\int [\partial_x( f\circ\wt\phi_C^{-1} )]^2 
		+ [\partial_y( f\circ\wt\phi_C^{-1} )]^2
		= \int [\partial_x f]^2(1+\xi_1) 
		+ [\partial_y f]^2(1+\xi_2),
		\eqen
		for some functions $\xi_1,\xi_2$ converging to $0$ in probability, uniformly on compact sets as $C\rta\infty$. Therefore the inequality $\|f\circ\wt\phi_C^{-1}\|_\nabla\leq 2\|f\|_\nabla$ holds with probability converging to $1$ as $C\to \infty$, uniformly on $\cF$.
		We now get (ii), since $\|g_C\|_{H^{-1}(\HH)}\Rightarrow 0$ as $C\rta \infty$, and
		\eqbn
		\sup_{\cF} \, \langle g_C\circ\wt\phi_C,f\rangle_\nabla
		= \sup_{\cF}  \, \langle g_C,f\circ \wt\phi_C^{-1} \rangle_\nabla
		\leq \sup_{\cF} \, \|g_C\|_{H^{-1}(\HH)}\cdot \|f\circ \wt\phi_C^{-1}\|_{\nabla}\le 2 \, \|g_C\|_{H^{-1}(\HH)}
		\eqen
		with probability converging to $1$ as $C\rta \infty$. 
		
		We also have that for some functions $\xi_1,\xi_2$ converging uniformly to zero in probability as $C\rta\infty$,
		\eqbn
		\|f\circ \wt\phi_C^{-1}-f\|_{\nabla}
		=\int [\partial_x f]^2 \xi_1
		+ [\partial_y f]^2\xi_2, 
		\eqen
		and this therefore converges to $0$ in probability as $C\rta \infty$, uniformly in $f\in \cF$. From this (i) follows since, uniformly in $f\in \cF$ and as $C\rta\infty$,
		\eqbn
		\langle h_\w\circ\wt\phi_C-h_\w,f \rangle_\nabla
		= \langle h_\w ,f\circ\wt\phi_C^{-1}-f  \rangle_\nabla
		\leq \|h_\w\|_{H^{-1}(\HH)} \cdot \|f\circ \wt\phi_C^{-1}-f\|_{\nabla}\, \Rightarrow\,  0.
		\eqen
	\end{proof}

	For $z\in I$ and $\ep>0$, define the semi-disk $\wh B(z,\ep)$ and $\ep_z\in(0,1]$ by 
	\eqbn
	\wh B(z,\ep):=B(z,\ep)\cap\HH,\qquad
	\ep_z=\sup\{ \ep\in(0,1]\,:\,\wh B(z,\ep)\subset D \}.
	\eqen
	Unless otherwise stated we assume throughout the section that $I$ is bounded away from $\HH\setminus D$ and, to simplify notation slightly, that
	\eqb
	\inf\{ \ep_z\,:\,z\in I \}>1.
	\label{eq:Idist}
	\eqe
	
	Let $h$ be a random generalised function with the law described in Proposition \ref{prop:zoom}; in the sequel, we denote the law of $h$ by $\P$. For $\ep\in(0,\ep_z)$ let $h_\ep(z)$ denote the average of $h$ on the semi-circle $\partial\wh B(z,\ep)\cap\HH$, and 
	for $\beta>1$ and $\ep\in(0,1]$, define the measure $d_{h,\eps}^\beta$ on $I$ by  
	\begin{equation}
	d_{h,\eps}^\beta(dz)=\Big(-\frac{h_\eps(z)}{2}+\log(1/\eps)+\beta\Big)\e^{h_\eps(z)}\eps\I_{\big\{\frac{h_\delta(z)}{2}<\log(1/\delta)+\beta \, \forall \delta\in [\eps,1]\big\}} \1_{z\in I} \, dz. 
	\label{eq:d-beta-eps}
	\end{equation}
	These measures played an important role in \cite{DRSV14one, DRSV14two, Pow18chaos}, and they are closely related to the \emph{derivative martingale} for the branching random walk (\cite{BK04}). The key point is that
	$d_{h,\eps}^\beta$ is a good approximation to the measure $\nu_{h,\eps}$ from Lemma \ref{lem::defnlm} when $\beta$ is large. It is however more convenient to work with, since its total mass is uniformly integrable in $\eps$ (which is not the case for $\nu_{h,\eps}$). More precisely, we have the following.
	\begin{lemma}\label{lem::d-ui}
		For any $A\subset I$ the family $(d_{h,\eps}^\beta(A))_{\eps\in(0,1]}$ is uniformly integrable (under $\P$).
	\end{lemma}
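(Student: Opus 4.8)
The plan is to recognize $d_{h,\eps}^\beta(A)$ as (essentially) a truncated derivative-martingale object of the same type studied for the critical Gaussian multiplicative chaos in \cite{DRSV14one,DRSV14two,Pow18chaos}, and then import the uniform integrability that is known in that setting. First I would reduce to the case where $h$ is (absolutely continuous with respect to) a Neumann GFF on $\HH$ with a fixed additive constant: by the hypotheses of Proposition \ref{prop:zoom} the field $h$ has continuous Dirichlet data on $\partial D\setminus\RR$ and free boundary conditions on $\partial D\cap\RR$, and on the neighbourhood of $I$ (which by \eqref{eq:Idist} is well inside $D$) one can write $h = h^{\mathrm{N}} + g$ where $h^{\mathrm{N}}$ is a Neumann GFF in a slightly larger domain and $g$ is an a.s.\ continuous (harmonic) function on a neighbourhood of $\overline I$. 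Since $g$ is bounded on $\overline I$, comparing $h_\eps(z)$ with $h^{\mathrm N}_\eps(z)$ only changes $d_{h,\eps}^\beta$ by a bounded multiplicative factor and a bounded shift in the threshold $\beta$ (at the cost of changing $\beta$ to $\beta\pm\|g\|_\infty$), and these operations preserve uniform integrability; one must be a little careful that the indicator in \eqref{eq:d-beta-eps} is monotone in $\beta$, so enlarging $\beta$ dominates from above, which is all that is needed for the UI bound. So it suffices to prove the statement for $h$ a Neumann GFF.

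For the Neumann GFF the key step is a second-moment / truncated-moment estimate. The natural approach is to show that $\sup_{\eps\in(0,1]}\E[ (d_{h,\eps}^\beta(A))^{1+\delta}] < \infty$ for some small $\delta>0$, which immediately gives uniform integrability. Because critical chaos has no finite moments of order $\ge 1$ without truncation, the indicator $\1_{\{h_\delta(z)/2 < \log(1/\delta)+\beta\ \forall \delta\in[\eps,1]\}}$ is essential: it restricts to the event that the circle-average process stays below the ``critical line'', which is exactly the barrier event under which the (subcritical-looking) moments become finite. This is precisely the computation carried out for the derivative martingale measure in \cite{DRSV14one} (see also the boundary analogue in \cite{Pow18chaos}): one writes the $(1+\delta)$-th moment as an integral over $z_1,\dots$ of a Gaussian expectation, uses Girsanov to tilt by the field values at the sampled points, and the barrier indicator kills the short-distance divergence. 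I would either cite this estimate directly from \cite{Pow18chaos}/\cite{DRSV14one} — observing that their argument is insensitive to whether one works with full-plane, bulk, or boundary circle averages — or, if a self-contained bound is wanted, run the standard decomposition of $h$ into its radial part around each point (a Brownian motion with the barrier) and an independent lateral part, and estimate the contribution of each dyadic annulus.

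The main obstacle I anticipate is purely bookkeeping rather than conceptual: matching the exact normalization in \eqref{eq:d-beta-eps} (the factor $(-h_\eps(z)/2 + \log(1/\eps)+\beta)$, the $\eps$ prefactor, and the semi-circle averages rather than full circle averages) to the precise statements available in the literature, and checking that the reduction from the mixed-boundary GFF to the Neumann GFF really does preserve the truncation structure. One subtle point worth flagging: the linear-in-$h_\eps$ prefactor can be negative on the event in the indicator is false, but on the event that the barrier holds it is comparable to $\log(1/\eps)$ up to the barrier fluctuation, so $|d_{h,\eps}^\beta(A)| \le C\,\big(\log(1/\eps)+\beta\big)\,e^{h_\eps(z)}\eps\,\1_{\{\cdots\}}$ pointwise, and it is this right-hand side whose moments I would actually bound — the extra logarithmic factor is harmless because the barrier gains a polynomial factor in $\eps$ that beats it. Once the $(1+\delta)$-th moment bound is in hand, uniform integrability follows from the de la Vallée–Poussin criterion, completing the proof.
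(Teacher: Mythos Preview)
Your proposal is correct and aligns with the paper's own treatment: the paper does not give a proof of this lemma at all, but simply states that the bulk version is in \cite{Pow18chaos} and that ``the proof goes through in exactly the same way for the boundary measures.'' Your reduction to a Neumann-type field via a bounded harmonic correction, followed by a truncated moment estimate (citing \cite{DRSV14one,Pow18chaos}), is precisely the strategy underlying that reference, so your sketch is a faithful expansion of what the paper leaves implicit. One minor wording slip: on the event where the indicator is \emph{true} the prefactor $-h_\eps(z)/2+\log(1/\eps)+\beta$ is automatically positive (since the barrier condition at $\delta=\eps$ gives exactly this), so there is no sign issue to worry about; otherwise your bookkeeping concerns are the right ones to flag.
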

	\begin{lemma}\label{lem::d-nu}
		Denote by $\cC_\beta$ the event $\{\sup_{z\in I}\frac{h_\delta(z)}{2}<\log(1/\delta)+\beta \,\, ; \,\, \forall \delta\in [0,1]\}$. Then $\mathbb{P}(\cC_\beta)\to 1$ as $\beta\to \infty$.
	\end{lemma}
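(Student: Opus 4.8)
The plan is to show that the probability that the circle-average process $h_\delta(z)$ stays uniformly (over $z\in I$ and $\delta\in(0,1]$) below the barrier $\log(1/\delta)+\beta$ tends to $1$ as $\beta\to\infty$. The natural strategy is a union/covering argument over the interval $I$ combined with the well-known fact (used in the construction of the critical measure, see \cite{DRSV14one, Pow18chaos}) that for a \emph{single} point $z$, the process $s\mapsto h_{e^{-s}}(z)+s$ (note the positive drift once we rescale: the circle average of the free-boundary GFF has a logarithmic variance so $h_{e^{-s}}(z)$ behaves like a Brownian motion with variance parameter of order $s$, and the barrier $\log(1/\delta)+\beta = s+\beta$ corresponds to a Brownian motion staying below a line of slope $1$) is a Brownian-motion-with-drift crossing problem, so that $\P(\exists \delta\le 1: h_\delta(z)/2 \ge \log(1/\delta)+\beta)$ decays in $\beta$. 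The subtlety is the uniformity over all $z\in I$ simultaneously.

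First I would recall the standard decomposition of the circle-average process: writing $\delta=e^{-s}$, the process $(h_{e^{-s}}(z))_{s\ge 0}$ has the law of a Brownian motion (up to a bounded additive correction from the harmonic/Dirichlet part of the boundary data, which is continuous on $\overline{I}$ since $I$ is bounded away from the Dirichlet boundary) with a known variance parameter; crucially the modulus of continuity of $z\mapsto h_{e^{-s}}(z)$ is controlled, e.g.\ by a Kolmogorov-type estimate giving Hölder continuity jointly in $(s,z)$ on compacts. This is exactly the kind of regularity established in \cite{DS11} for the circle-average field. With this in hand, I would take a dyadic net of points $z_1,\dots,z_{N_n}$ in $I$ at scale $e^{-n}$ (for the portion of the barrier problem at radii $\delta\in[e^{-n-1},e^{-n}]$ there are $O(e^{n})$ relevant points), apply the single-point barrier estimate to each $z_i$ with barrier level $\beta/2$, and use the Hölder modulus of continuity to control the fluctuation of $h_\delta$ between net points by, say, $\beta/2$ off an event of small probability. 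Summing the single-point tail bounds — which must decay faster than $e^{-n}$ in the relevant regime so that the union bound over $O(e^n)$ points at scale $n$, summed over $n$, is still summable in $\beta$ — gives $\P(\cC_\beta^c)\to 0$.

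The main obstacle is making the union bound converge: one needs the single-point probability $\P(\sup_{\delta\in[e^{-n-1},e^{-n}]} h_\delta(z)/2 - \log(1/\delta) \ge \beta)$ to beat the $O(e^n)$ entropy factor. For a Brownian motion $W_s$ with the right variance, $\P(\sup_{s\le n}(W_s - s)\ge \beta)$ does decay exponentially in $\beta$ but the relevant refined statement is a ballot-type/Gaussian tail estimate showing $\P(\exists s\in[n,n+1]: W_s - s \ge \beta)\lesssim e^{-c\beta} e^{-n}$ (the extra $e^{-n}$ coming from the Gaussian density at level $\approx n$), which is exactly what is available from the barrier estimates underlying the derivative-martingale construction. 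I would invoke these estimates from \cite{DRSV14one} (or \cite{Pow18chaos}), or reprove the one-line Gaussian computation, so that $\sum_n e^n \cdot e^{-c\beta}e^{-n} = C e^{-c\beta}\to 0$. A secondary, more routine point to handle carefully is the passage from $\delta\in(0,1]$ (continuum of radii) to the dyadic scales and the reduction to the $\beta/2$ levels via the continuity estimate; and one should note that the Dirichlet boundary contribution and the free-boundary subtlety (semi-circle averages rather than full circles) only change constants, not the structure of the argument. Alternatively — and perhaps more cleanly — one can avoid reproving anything by noting that $\cC_\beta^c$ is contained in the event that $d_{h,1}^\beta$ (or rather the indicator defining it) fails on a positive-length sub-interval, and deduce $\P(\cC_\beta)\to 1$ directly from the fact, implicit in Lemma \ref{lem::defnlm} and the derivative-martingale construction, that $\nu_h$ is a.s.\ the limit of $d_{h,\eps}^\beta$ as $\eps\to0$ then $\beta\to\infty$ with $\nu_h$ finite on $I$; but I expect the direct barrier estimate above to be the intended and most transparent route.
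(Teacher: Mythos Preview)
The paper does not give a self-contained proof of this lemma; it simply states that it ``is a consequence of \cite{Aco14}'', i.e., it cites the known tightness result for the recentred maximum of a log-correlated Gaussian field. Your proposal, by contrast, attempts a direct proof via a covering argument and single-point barrier estimates.

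Your explicit union-bound computation does not close. You correctly observe that for a single point $z$, $\P\big(\exists\, s\in[n,n+1]:\ h_{e^{-s}}(z)/2 - s \ge \beta\big)\lesssim e^{-c\beta}e^{-n}$, and that there are $O(e^{n})$ spatial points in the covering net at scale $n$. But then the contribution from scale $n$ is $e^{n}\cdot e^{-n}e^{-c\beta}=e^{-c\beta}$, and the sum over all scales $n\ge 0$ is $\sum_{n\ge 0} e^{-c\beta}=\infty$, not $Ce^{-c\beta}$ as you wrote. This is precisely the manifestation of $\gamma=2$ criticality: the first-moment union bound is exactly borderline and fails to sum.

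The repair you allude to---ballot/barrier estimates---is indeed the right idea, but it is more delicate than a single-point refinement. One must exploit the hierarchical (branching-random-walk) structure of the circle-average field across scales: conditionally on having stayed below the barrier at coarser scales, the probability of being near it at scale $n$ picks up an additional polynomially decaying factor in $n$, and it is this entropic-repulsion factor that makes the sum over $n$ converge. Carrying this out carefully is exactly the content of \cite{Aco14} (or the relevant lemmas of \cite{DRSV14one,DRSV14two}), so in practice one ends up citing those references anyway---which is what the paper does. Your alternative route via finiteness of $\nu_h(I)$ is circular: in the construction of the critical measure, $\P(\cC_\beta)\to 1$ is an \emph{input} used to establish uniform integrability of the truncated approximations $d^\beta_{h,\eps}$, not a consequence of the existence of $\nu_h$.
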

	The version of Lemma \ref{lem::d-ui} when the measures $d^\beta_{h,\ep}$ are defined in the bulk comes from \cite{Pow18chaos}, and the proof goes through in exactly the same way for the boundary measures \eqref{eq:d-beta-eps}. Lemma \ref{lem::d-nu} is a consequence of \cite{Aco14}. 
	
	\begin{remark0}\label{rmk::cb}
		On the event $\cC_\beta$ it holds that $d_{h,\eps}^\beta(dz)=\nu_{h,\eps}(dz)+\beta \eps \e^{h_\eps(z)}dz$. Moreover, (see \cite{RV10}) the measure $ \eps \e^{h_\eps(z)} dz$ converges to $0$ a.s.\ as $\eps\to 0$.
	\end{remark0}
	By uniform integrability of $d_{h,\eps}^\beta(I)$, we have the following.
	\begin{lemma}
		Let $\beta$ be fixed. Then the sequence $(h,d_{h,\eps}^\beta)$ is tight in $\eps$, with respect to the product topology formed from the topology of $H^{-1}_{\op{loc}}(D)$ in the first coordinate and the weak topology for measures on $D$ in the second coordinate.
		\label{lem:rooted1}
	\end{lemma}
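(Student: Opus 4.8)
The plan is to handle the two coordinates separately and then recombine them. First I would note that the law of $h$ under $\P$ is \emph{fixed}: it does not depend on $\eps$ at all, since $d_{h,\eps}^\beta$ is a deterministic functional of $h$ and $\eps$. Hence the marginal laws of the first coordinate form the constant family $\{\P\}$, which is tight because $\P$ is a Borel probability measure on the Polish space $H^{-1}_{\op{loc}}(D)$ (Ulam's theorem). Since a coordinatewise combination of tight families is tight on the product space — given $\delta>0$ one picks compact sets $K_1$, $K_2$ in the two coordinates, each carrying mass at least $1-\delta/2$ uniformly in $\eps$, and then $K_1\times K_2$ works — it remains only to prove that the family of (random) measures $(d_{h,\eps}^\beta)_{\eps\in(0,1]}$ is tight with respect to the weak topology.

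For this, the structural observation I would exploit is that, thanks to the indicator $\1_{z\in I}$ appearing in \eqref{eq:d-beta-eps}, every $d_{h,\eps}^\beta$ is supported on the \emph{single} compact interval $\bar I=[a,b]$, with total mass equal to $d_{h,\eps}^\beta(I)$. Among finite measures all supported on a fixed compact set, a bound on total mass already gives weak precompactness: the sets $\{\mu:\mu([a,b])\le M\}$ are weakly compact (Banach--Alaoglu/Prokhorov; no mass can leak to the boundary of $D$ or to infinity). Thus tightness of $(d_{h,\eps}^\beta)_{\eps\in(0,1]}$ reduces to the scalar statement
\[
\lim_{M\to\infty}\ \sup_{\eps\in(0,1]}\ \P\big(d_{h,\eps}^\beta(I)>M\big)=0 .
\]

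This last statement is then immediate from Lemma \ref{lem::d-ui}: uniform integrability of $(d_{h,\eps}^\beta(I))_{\eps\in(0,1]}$ under $\P$ in particular gives a uniform $L^1$ bound $\sup_{\eps\in(0,1]}\mathbb{E}_\P[d_{h,\eps}^\beta(I)]<\infty$, and then Markov's inequality yields $\sup_{\eps\in(0,1]}\P(d_{h,\eps}^\beta(I)>M)\le M^{-1}\sup_{\eps\in(0,1]}\mathbb{E}_\P[d_{h,\eps}^\beta(I)]\to 0$ as $M\to\infty$, as required.

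I do not anticipate any genuine obstacle: the analytic content is entirely absorbed into Lemma \ref{lem::d-ui}, and everything else is soft measure-theoretic bookkeeping. The only point deserving a moment's care is the reduction of measure-tightness to a uniform total-mass bound, which is legitimate precisely because all the $d_{h,\eps}^\beta$ are carried by one common compact set, so no mass can escape.
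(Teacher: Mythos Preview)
Your proposal is correct and follows essentially the same approach as the paper, which simply states that the lemma follows from the uniform integrability established in Lemma \ref{lem::d-ui}. You have merely unpacked that one-line justification: the first marginal is fixed, the second is a family of random measures all supported on the single compact interval $\bar I$, and uniform integrability of the total mass gives the required uniform total-mass bound via Markov's inequality.
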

	
	Let us take a subsequence of $\eps$ along which \[(h,d_{h,\eps}^\beta)\Rightarrow (h,d^\beta),\] and denote by $\P_\infty$ the law of the limiting pair. Note that the $\P_\infty$ marginal law of $h$ must be equal to its $\P$ law (as in Proposition \ref{prop:zoom}). Also write $d_h^\beta$ for the $\P_\infty$ conditional law of $d^\beta$ given $h$, which is a measurable function of $h$ by definition (although we will not need it, the proof of Lemma \ref{lem:rooted2} below actually shows that this function does not depend on the chosen subsequence). In fact, it should be the case that under $\P_\infty$, $d^\beta$ is  measurable with respect to $h$ (and so $d^\beta$ and $d_h^\beta$ are equal a.s.). However, for us it suffices to simply work with $d_h^\beta$. 
	
	\begin{remark0} \label{rmk::dbetanu} Observe that by Remark \ref{rmk::cb}, on the event $\mathcal{C}_\beta$ the convergence $d_{h,\eps}^\beta\to \nu_h$ holds in probability as $\eps\to 0$ (i.e., along any subsequence). Thus $d^\beta=d^\beta_h=\nu_h$ on this event.
	\end{remark0}
	
	The following elementary lemma will be used in the proof of Proposition \ref{prop:zoom}. It is straightforward to verify using Girsanov's theorem, the Markov property of Brownian motion, the reflection principle, and the fact that a 3-dimensional Bessel process started from a positive value is equal in law to a 1-dimensional Brownian motion started from that value and conditioned to stay positive. See, for example, \cite[Example 3]{Pin85}.
	\begin{lemma}
		Let $(B_t)_{t\geq 0}$ be a Brownian motion started from a possibly random position $B_0$ and let $\alpha:=\op{Var}(B_1-B_0)$ (so the Brownian motion has speed $\sqrt{\alpha}$). Let $\beta,\gamma>0$. Assume  $\P[B_0<\beta]>0$ and $\BB E[ |B_0|e^{\gamma B_0}]<\infty$. Then the following process $(M_t)_{t\geq 0}$ is a martingale (with respect to the natural filtration of $B$):
		$$
		M_t:=(-B_t+\gamma\alpha t+\beta)\1_{\{ -B_u+\gamma\alpha u + \beta>0\,\,\forall u\in[0,t] \}} e^{\gamma B_t-\frac{\gamma^2}{2}\alpha t}.
		$$
		For $t\geq 0$ let $\P_t$ denote the probability measure for which the Radon-Nikodym derivative relative to $\P$ is proportional to $M_t$. Define $X_u:=-B_u+\gamma\alpha u+\beta$ for $u\geq 0$. Under $\P_t$, the process $(X_u)_{u\leq t}$ has the following law.
		\begin{compactitem}
			\item $X_0$ has the law of $-B_0+\beta$ reweighted by $M_0=(-B_0+\beta)\1_{ \{-B_0+\beta>0\}} e^{\gamma B_0}$.
			\item Conditioned on $X_0$, $(X_u)_{u\in[0,t]}$ has the law of $(\cB_{\alpha s})_{u\in[0,t]}$, where $\cB$ is a 3-dimensional Bessel process started from $X_0$.
			\item Conditioned on $(X_u)_{u\in[0,t]}$, the process $(X_{u+t}-X_t)_{u\geq 0}$ has the law of $(B_u-B_0)_{u\geq 0}$.
		\end{compactitem}
		\label{prop:rn}
	\end{lemma}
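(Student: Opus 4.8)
The plan is to recognise this as a Girsanov-plus-$h$-transform bookkeeping exercise, exactly as the preamble to the lemma advertises. Write the driving motion as $B_t = B_0 + \sqrt{\alpha}\,W_t$ with $W$ a standard Brownian motion independent of $B_0$, let $\mathcal{F}_t=\sigma(B_u:u\le t)$, and set $X_t = -B_t+\gamma\alpha t+\beta$, so that under $\mathbb{P}$ the process $(X_u)_{u\ge 0}$ is a speed-$\sqrt{\alpha}$ Brownian motion with drift $\gamma\alpha$ started from $X_0=\beta-B_0$, and $\mathcal{F}_t=\sigma(X_u:u\le t)$. The first observation is the factorisation
\[ M_t = e^{\gamma B_t-\tfrac{\gamma^2}{2}\alpha t}\, Y_t,\qquad Y_t := X_t\,\mathbf{1}_{\{X_u>0\ \forall u\le t\}}. \]
Put $Z:=\mathbb{E}[e^{\gamma B_0}]$. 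From the elementary bound $e^{\gamma b}\le |b|e^{\gamma b}+e^{\gamma}\mathbf{1}_{\{|b|\le 1\}}$ and the hypothesis $\mathbb{E}[|B_0|e^{\gamma B_0}]<\infty$ one gets $Z<\infty$, and likewise $\mathbb{E}[|M_0|]=\mathbb{E}[(\beta-B_0)^+e^{\gamma B_0}]\le \beta Z+\mathbb{E}[|B_0|e^{\gamma B_0}]<\infty$. Since $(e^{\gamma B_t-\tfrac{\gamma^2}{2}\alpha t})_{t\ge 0}$ is a $\mathbb{P}$-martingale, the prescription $\tfrac{d\mathbb{Q}}{d\mathbb{P}}\big|_{\mathcal{F}_t}=Z^{-1}e^{\gamma B_t-\tfrac{\gamma^2}{2}\alpha t}$ is consistent and defines a probability measure $\mathbb{Q}$; by Girsanov, under $\mathbb{Q}$ the law of $B_0$ is its $\mathbb{P}$-law reweighted by $Z^{-1}e^{\gamma B_0}$, and, given $B_0$, the increments $(B_t-B_0)_{t\ge 0}$ form a speed-$\sqrt{\alpha}$ Brownian motion with drift $\gamma\alpha$. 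Hence under $\mathbb{Q}$ the drift in $X$ is exactly cancelled: $(X_u)_{u\ge 0}$ is a \emph{driftless} speed-$\sqrt{\alpha}$ Brownian motion from $X_0=\beta-B_0$, and $\mathbb{E}_{\mathbb{Q}}[|X_0|]=Z^{-1}\mathbb{E}_{\mathbb{P}}[|\beta-B_0|e^{\gamma B_0}]<\infty$, i.e.\ $X_0\in L^1(\mathbb{Q})$.

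For the martingale property of $M$, note that $M_t=Z\,\tfrac{d\mathbb{Q}}{d\mathbb{P}}\big|_{\mathcal{F}_t}\,Y_t$, so by Bayes' rule $M$ is a $\mathbb{P}$-martingale as soon as $Y$ is a $\mathbb{Q}$-martingale. To see the latter, split on the $\mathcal{F}_0$-measurable event $\{X_0>0\}$: on its complement $Y\equiv 0$; on $\{X_0>0\}$, with $\tau:=\inf\{u\ge 0:X_u\le 0\}$ we have $Y_t=X_{t\wedge\tau}$, a continuous nonnegative local martingale under $\mathbb{Q}$ started from $X_0>0$, which on each $[0,T]$ is dominated by $\sup_{u\le T}|X_u|\in L^1(\mathbb{Q})$ (using $X_0\in L^1(\mathbb{Q})$ together with the reflection-principle bound on the running maximum of a Brownian motion) and is therefore a true martingale. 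Combining the two cases gives $\mathbb{E}_{\mathbb{Q}}[Y_t\mid\mathcal{F}_s]=Y_s$; together with $\mathbb{E}_{\mathbb{P}}[|M_0|]<\infty$ this proves $M$ is a martingale.

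For the law under $\mathbb{P}_t$, the martingale property gives $\mathbb{E}_{\mathbb{P}}[M_t]=\mathbb{E}_{\mathbb{P}}[M_0]=:c_0\in(0,\infty)$ (positive since $\mathbb{P}[B_0<\beta]>0$), hence $\tfrac{d\mathbb{P}_t}{d\mathbb{P}}\big|_{\mathcal{F}_t}=c_0^{-1}M_t$ and equivalently $\tfrac{d\mathbb{P}_t}{d\mathbb{Q}}\big|_{\mathcal{F}_t}=(Z/c_0)\,Y_t$. Thus $\mathbb{P}_t$ is $\mathbb{Q}$ biased by the $\mathbb{Q}$-martingale $Y_t$, and one reads off its effect in three pieces. \emph{(i) Initial value:} the marginal effect on $\mathcal{F}_0$ is to bias by $\mathbb{E}_{\mathbb{Q}}[Y_t\mid\mathcal{F}_0]=Y_0=(\beta-B_0)^+$, which turns the $\mathbb{Q}$-law of $B_0$ (already the $\mathbb{P}$-law reweighted by $Z^{-1}e^{\gamma B_0}$) into the $\mathbb{P}$-law reweighted by $(\beta-B_0)\mathbf{1}_{\{B_0<\beta\}}e^{\gamma B_0}=M_0$; this is the first bullet. \emph{(ii) Path on $[0,t]$ given $X_0$:} conditionally on $X_0=x>0$, biasing the $\mathbb{Q}$-law of $X$ (a driftless speed-$\sqrt{\alpha}$ Brownian motion from $x$, run until it hits $0$) by the terminal weight $X_{t\wedge\tau}$ is Doob's $h$-transform with the harmonic function $h(y)=y$, i.e.\ the motion conditioned to stay positive; by the fact recalled immediately before the statement this is a $3$-dimensional Bessel process, and rescaling time to account for the speed gives that $(X_u)_{u\in[0,t]}$ has, given $X_0$, the law of $(\mathcal{B}_{\alpha u})_{u\in[0,t]}$ with $\mathcal{B}$ a standard $3$-dimensional Bessel process from $X_0$ (one may cite \cite[Example 3]{Pin85}, or \cite{Wil74}). \emph{(iii) Future increments:} $Y_t$ is $\mathcal{F}_t$-measurable, so the $\mathbb{P}_t$-conditional law of $(X_{u+t}-X_t)_{u\ge 0}$ given $\mathcal{F}_t$ coincides with the $\mathbb{Q}$-conditional one, namely a driftless speed-$\sqrt{\alpha}$ Brownian motion independent of $\mathcal{F}_t$; since such a motion is symmetric, this is the law of $(B_u-B_0)_{u\ge 0}$, which is the third bullet.

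The only step that is not purely mechanical is the bookkeeping with the moment hypothesis $\mathbb{E}[|B_0|e^{\gamma B_0}]<\infty$: it is needed not just to make $\mathbb{Q}$ and $M_0$ well-defined, but crucially to secure $X_0\in L^1(\mathbb{Q})$, which is exactly what upgrades the stopped local martingale $X_{\cdot\wedge\tau}$ to a genuine martingale and thereby makes the three-step description rigorous rather than formal; the rest is the advertised combination of Girsanov's theorem, the reflection principle, and the $h$-transform description of Brownian motion conditioned to stay positive.
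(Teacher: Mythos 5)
Your proof of the martingale property and of the first two bullet points is correct and follows exactly the Girsanov-plus-$h$-transform route that the paper's one-sentence proof sketch advertises: the factorisation $M_t=Z\,\frac{d\Q}{d\P}\big|_{\cF_t}\,Y_t$ with $Y_t=X_{t\wedge\tau}$ a stopped nonnegative $\Q$-local martingale, the $L^1$ bookkeeping (via $X_0\in L^1(\Q)$ and the reflection-principle bound on the running supremum) that upgrades it to a true $\Q$-martingale, and the Doob $h$-transform identification with $h(y)=y$ are all handled properly.

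There is, however, a genuine error in your treatment of bullet (iii). You argue that since $\frac{d\P_t}{d\Q}\big|_{\cF_t}=(Z/c_0)Y_t$ is $\cF_t$-measurable, the measures $\P_t$ and $\Q$ coincide conditionally on $\cF_t$, and hence that $(X_{u+t}-X_t)_{u\ge 0}$ inherits the driftless $\Q$-law. That inference fails: the density of $\P_t$ with respect to $\Q$ does not stay constant past time $t$ --- on $\cF_s$ with $s>t$ it equals $(Z/c_0)\,Y_t\,e^{-\gamma(B_s-B_t)+\frac{\gamma^2}{2}\alpha(s-t)}$, which depends on $B_s$. What is constant for $s\ge t$ is $\frac{d\P_t}{d\P}\big|_{\cF_s}=c_0^{-1}M_t$; so the correct statement is that $\P_t$ agrees with $\P$, not with $\Q$, conditionally on $\cF_t$. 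This is exactly the ``Markov property of Brownian motion'' ingredient in the paper's proof sketch that your argument bypasses. Under this conditioning $(B_{u+t}-B_t)_{u\ge 0}$ is a driftless speed-$\sqrt{\alpha}$ Brownian motion from $0$, while $(X_{u+t}-X_t)_{u\ge 0}=-(B_{u+t}-B_t)+\gamma\alpha u$ carries the nontrivial drift $\gamma\alpha$. (Incidentally, this means the third bullet of the lemma as printed in the paper appears to have a typo: it should read $(B_{u+t}-B_t)_{u\ge 0}\eqD(B_u-B_0)_{u\ge 0}$, not $(X_{u+t}-X_t)_{u\ge 0}\eqD(B_u-B_0)_{u\ge 0}$; this is the version consistent both with the drift computation and with the way the lemma is used in the paper, where the process playing the role of $B$ is declared driftless past time $\log\eps^{-1}$.) The repair is one line: replace your $\Q$-conditioning step by the observation that $M_t\in\cF_t$ forces $\P_t$ and $\P$ to have the same conditional law of the post-$t$ $B$-increments given $\cF_t$.
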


	\begin{lemma}
		Let $h$ and $\beta$ be as in Lemma \ref{lem:rooted1}. Let $\Q$ denote the law of $h$ reweighted by $d^\beta_h(I)$, and define $g(z):=\Q[(-h_1(z)+\beta)\e^{h_1(z)}\I_{\{h_1(z)<\beta\}}]$. Note that under $\Q$, $d_h^\beta(I)$ is a.s.\ strictly positive. Then (i) and (ii) below give two equivalent procedures to sample a pair 
		\begin{equation}\label{eqn:hz_space} (\wh h,z) \text{ with } z\in I \text{ and } \wh h\in H^{-1}( \wh B(z,1) ).\end{equation} 
		\begin{compactitem}
			\item[(i)] Sample $h$ according to $\Q$, then sample $z$ from $d^\beta_h$ (normalised to be a probability measure), and set $\wh h=h|_{\wh B(z,1)}$.
			\item[(ii)] Sample $z$ from $I$ with density proportional to $g$ relative to Lebesgue measure, and then set $\wh h=\wh h_\cir+\wh h_\rad$, where $\wh h_\cir$ and $\wh h_\rad$ are independent, $\wh h_\cir$ has the law of the projection of $h$ onto $H_2(\wh B(z,1))$, 
			and $\wh h_\rad(x)=A_{-\log|x-z|}$ for a process $(A_s)_{s\geq 0}$ such that:
			\begin{compactitem}
				\item $A_0$ has the law of $h_1(z)$, reweighted by $(-h_1(z)+\beta)e^{h_1(z)}\I_{ \{ h_1(z)\leq\beta \} }$;
				\item conditioned on $A_0$, $(A_s)_{s\geq 0}$ is equal in distribution to $ (-\cB_{2s}+2s+\beta)_{s\geq 0}$ for $(\cB_s)_{s\geq 0}$ a 3-dimensional Bessel process started from $-A_0+\beta$.
			\end{compactitem}
		\end{compactitem}
		\label{lem:rooted2}
	\end{lemma}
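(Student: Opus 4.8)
\begin{proof3}
The plan is to compute the law of the pair $(\wh h,z)$ sampled by procedure (i) explicitly: I would replace $d^\beta_h$ by its $\eps$-regularisation $d^\beta_{h,\eps}$, disintegrate the resulting measure by a Campbell--Palm argument, apply Lemma \ref{prop:rn} to the reweighted field, and finally send $\eps\downarrow0$ using the uniform integrability from Lemma \ref{lem::d-ui}.

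\emph{Palm disintegration at level $\eps$.} For $\eps\in(0,1]$, let $\Q_\eps$ be the probability measure on pairs $(h,z)$, $z\in I$, obtained by normalising $d^\beta_{h,\eps}(dz)\,\P(dh)$, and write $w_\eps(z;h)$ for the (nonnegative) integrand on the right-hand side of \eqref{eq:d-beta-eps}; note that $w_\eps(z;\cdot)$ depends on $h$ only through the semicircle averages $(h_\delta(z))_{\delta\in[\eps,1]}$. By Tonelli's theorem, sampling from $\Q_\eps$ is equivalent to first choosing $z\in I$ with density proportional to $g_\eps(z):=\mathbb{E}_\P[w_\eps(z;h)]$, and then, given $z$, sampling $h$ from $\P$ reweighted by $w_\eps(z;h)/g_\eps(z)$. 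I would then use the joint convergence $(h,d^\beta_{h,\eps})\Rightarrow(h,d^\beta)$ along the subsequence fixed after Lemma \ref{lem:rooted1}, together with the uniform integrability of $d^\beta_{h,\eps}(I)$ from Lemma \ref{lem::d-ui}, to show that the image of $\Q_\eps$ under $(h,z)\mapsto(h|_{\wh B(z,1)},z)$ converges weakly as $\eps\downarrow0$ to the law $\Q_0$ of the pair produced by procedure (i) --- i.e.\ the image under the same map of the measure $\propto d^\beta_h(dz)\,\P(dh)$; equivalently, under $\Q_0$ the marginal of $h$ is $\Q$ and, given $h$, the point $z$ has law $d^\beta_h$ normalised.

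\emph{The reweighted field near $z$.} Fix $z\in I$. Since $\wh B(z,1)\subset D$ by \eqref{eq:Idist}, the Markov property of the GFF lets me decompose $h|_{\wh B(z,1)}=\varphi^z+h^z_\rad+h^z_\cir$ as a sum of independent terms, where $\varphi^z$ is harmonic with the (random) boundary data of $h$ on $\partial\wh B(z,1)\cap\HH$ and Neumann data on $\partial\wh B(z,1)\cap\RR$ --- hence radially constant about $z$ --- and $h^z_\rad,h^z_\cir$ are the radial and mean--zero--on--semicircles parts of the zero/mixed--boundary GFF on $\wh B(z,1)$; in particular (cf.\ \cite[\S 6.3]{DS11} and \cite[Lemma~4.2]{DMS18+}) the process $s\mapsto h_{e^{-s}}(z)$ has, for $s\ge0$, the law of $s\mapsto h_1(z)+B_{2s}$ for a standard Brownian motion $B$ independent of the Gaussian variable $h_1(z)$, and it is independent of $h^z_\cir$. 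Since $w_\eps(z;\cdot)$ is a functional of this radial process alone, reweighting by $w_\eps(z)$ acts only on the radial coordinate and leaves $h^z_\cir$, and its independence from the radial part, untouched. The crucial point is that, expressed in the variable $s=\log(1/\text{radius})$, the weight $w_\eps(z;h)$ is exactly a martingale $M_t$ of the form appearing in Lemma \ref{prop:rn}, with $t=\log(1/\eps)$, driving Brownian motion a scalar multiple of $(h_{e^{-s}}(z))_{s\ge0}$, and parameters chosen so that its exponential factor is $e^{h_\eps(z)}\eps$ and its affine factor is $-\frac{h_\eps(z)}{2}+\log(1/\eps)+\beta$. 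Applying Lemma \ref{prop:rn} then identifies the $w_\eps(z)$-reweighted law of $(h_{e^{-s}}(z))_{0\le s\le\log(1/\eps)}$: the starting value $h_1(z)$ is reweighted by the factor in part (ii), and, conditionally on it, the radial process is the Bessel--type process of part (ii), run up to time $\log(1/\eps)$ and then continued by an independent drift--less Brownian motion. Moreover, $M_t$ being a genuine martingale, $g_\eps(z)=\mathbb{E}_\P[M_0]$ does not depend on $\eps$, and one reads off that it coincides with $g(z)$ up to a constant.

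\emph{Passing to the limit, and the main obstacle.} As $\eps\downarrow0$ the terminal drift--less Brownian piece above recedes to $+\infty$ --- equivalently, the Bessel description becomes valid on all of $[0,\infty)$ --- while $h^z_\cir$ and the law of $h_1(z)$ are unchanged and $g_\eps\equiv g$ up to a constant; combined with the convergence $\Q_\eps\to\Q_0$, this shows that procedure (i) produces exactly the pair described by procedure (ii). Since that description makes no reference to the chosen subsequence, this also verifies the claim made just before the statement, that $d^\beta_h$ is independent of that choice. I expect the genuinely delicate ingredient to be the clean decomposition of $h$ near the interior boundary point $z$ into \emph{independent} radial and circular parts with the radial part an honest Brownian motion --- routine for the Neumann GFF, but requiring the observation that the mixed boundary data of $h$ contributes only a radially constant harmonic term near $z$ --- together with the bookkeeping that recognises the weight in \eqref{eq:d-beta-eps}, in Brownian coordinates, as precisely the martingale of Lemma \ref{prop:rn} with the right speed and barrier; the Palm disintegration and the $\eps\downarrow0$ limit should then be routine given the uniform integrability in Lemma \ref{lem::d-ui}.
\end{proof3}
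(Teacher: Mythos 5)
Your proposal follows the paper's own proof essentially verbatim: work at the $\eps$-level, recognise the weight in \eqref{eq:d-beta-eps} (in log-radius coordinates) as the martingale of Lemma \ref{prop:rn} for the semicircle-average process so as to obtain the $\eps$-regularised equivalence (the paper's ``(i')$\Leftrightarrow$(ii')'', which is exactly your Palm disintegration step), then send $\eps\downarrow 0$ using the uniform integrability of Lemma \ref{lem::d-ui}. One phrase to tighten: $\varphi^z$ is not ``radially constant about $z$''; rather, its \emph{semicircle averages} are constant, equal to $\varphi^z(z)$, by the mean-value property after reflection across $\RR$, while $\varphi^z$ retains a nontrivial angular ($H_2$) component --- which is precisely why (ii) takes $\wh h_\cir$ to have the law of the projection of $h$ itself (not of a Neumann GFF) onto $H_2(\wh B(z,1))$.
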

	
	The proof of Lemma \ref{lem:rooted2} goes via an argument in the style of \cite{Sha16}.
	\medskip
	
	\begin{proof}
		Let $\Q_\ep$ be the law of $h$ reweighted by $d^\beta_{h,\ep}(I)$. 
		By Lemma \ref{prop:rn} and the definition of $d^\beta_{h,\ep}$, (i') and (ii') below give two equivalent procedures to sample a pair $(\wh h,z)$ as in \eqref{eqn:hz_space}. 
		\begin{compactitem}
			\item[(i')] Sample $h$ according to $\Q_\ep$, then sample $z$ from $d^\beta_{h,\ep}$ (normalised to be a probability measure), and set $\wh h=h|_{\wh B(z,1)}$.
			\item[(ii')] Sample $z$ from $I$ with density proportional to $g$ relative to Lebesgue measure, and then set $\wh h=\wh h_\cir+\wh h_\rad$, where $\wh h_\cir$ and $\wh h_\rad$ are independent, $\wh h_\cir$ has the law of the projection of $h$ onto $H_2(\wh B(z,1))$, 
			and $\wh h_\rad(x)=A_{-\log|x-z|}$ for a process $(A_s)_{s\geq 0}$ such that:
			\begin{compactitem}
				\item $A_0$ has the law of $h_1(z)$, reweighted by $(-h_1(z)+\beta)e^{h_1(z)}\I_{ \{ h_1(z)\leq\beta \} }$;
				\item conditioned on $A_0$,  $(A_s)_{s\in[0,\log\ep^{-1}]}$ is equal in distribution to $ (-\cB_{2s}+2s+\beta)_{s\in[0,\log\ep^{-1}]}$ for $(\cB_{s})_{s\geq 0}$ a 3-dimensional Bessel process started from $-A_0+\beta$;
				\item conditioned on $(A_s)_{s\in[0,\log\ep^{-1}]}$,  $(A_s)_{s\in[\log\ep^{-1},\infty)}$ is equal in distribution to $(B_{2(s-\log\ep^{-1})}+A_{\log\ep^{-1}})_{s\in[\log\ep^{-1},\infty)}$ for $(B_{s})_{s\geq 0}$ a standard Brownian motion started from 0.
			\end{compactitem}
		\end{compactitem} \vspace{0.1cm}
		It is clear that the law in (ii') converges to the law in (ii) as $\eps\to 0$. Now we will argue that, along the subsequence that was used to define $d_h^\beta$, the law in (i') also converges to the law in (i). Let $F$ be a continuous bounded functional on $\Hloc(\HH)$ and let $A\subset I$ be a Borel set. By uniform integrability of $d^\beta_{h,\ep}$, along the considered subsequence,
		\[ \frac{\P(d_{h,\eps}^\beta(A) F(h))}{\P(d_{h,\eps}^\beta(I))} \to \frac{ \P_\infty(d^\beta(A) F(h))}{\P_\infty(d^\beta(I))}=\frac{\P(d^\beta_h(A) F(h))}{\P(d_h^\beta(I))}, \]
		where we slightly abuse notation and also use $\P,\P_\infty$ to denote expectation relative to the probability measures $\P,\P_\infty$.
		Since the left-hand side is equal to the expectation of $F(h)\I_{\{z\in A\}}$ for $(h,z)$ sampled as in (i') and the right-hand side is equal to the same expectation for $(h,z)$ sampled as in (i), we can conclude that the law in (i') converges to the law in (i). Clearly the equivalence of (i') and (ii') for every $\eps$, together with the convergence (i') $\Rightarrow$ (i) and (ii') $\Rightarrow$ (ii) implies the equivalence of (i) and (ii).
	\end{proof}
	
	\begin{lemma}
		Let $(\wh h,z)$ have the law described in (i) of Lemma \ref{lem:rooted2}. Then as $C\rta\infty$ and conditioned on $z$, the surface $(\wh B(z,1),\wh h+C,z,z+i)$ converges in law to a $(2,2)$-quantum wedge.
		\label{prop:zoom0}
	\end{lemma}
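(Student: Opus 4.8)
The plan is to replace the law~(i) of Lemma~\ref{lem:rooted2} by the explicit law~(ii), and then to run a re-centring argument based on the last-exit decomposition of the $3$-dimensional Bessel process, in the spirit of the identification of quantum wedges as ``zoom-in'' limits. Since the assertion is conditional on $z$, after this replacement we may fix $z$, and by translation invariance of the construction we may assume $z=0$. Thus $\wh h=\wh h_\cir+\wh h_\rad$ with $\wh h_\cir$ and $\wh h_\rad$ independent, $\wh h_\cir$ distributed as the projection onto $H_2(\wh B(0,1))$ of a Neumann GFF, and $\wh h_\rad(x)=A_{-\log|x|}$ with $A_s=-\cB_{2s}+2s+\beta$ for a Bessel$(3)$ process $\cB$ started from $v_0:=\beta-A_0\geq 0$, where $A_0$ has the reweighted law of $h_1(0)$ described there. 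We must show that $(\wh B(0,1),\wh h+C,0,i)$ converges in law, as $C\to\infty$, to a $(2,2)$-quantum wedge.

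First I would transport the surface to the half-strip $\strip^+:=(0,\infty)\times[0,\pi]$ via the conformal map $\Xi(w)=-\log w+\im\pi$, which sends the marked point $0$ to $+\infty$ and $i$ to the point $\im\pi/2$ on the closed end $\{0\}\times[0,\pi]$. The point of using $\Xi$ is that it turns semicircle averages around $0$ into averages over the vertical segments $\{s\}\times[0,\pi]$. Combining this with the coordinate change $\tilde{\wh h}:=\wh h\circ\Xi^{-1}+2\log|(\Xi^{-1})'|$ and the identity $\log|(\Xi^{-1})'(s+\im t)|=-s$, a short computation shows that the mean of $\tilde{\wh h}$ over $\{s\}\times[0,\pi]$ equals $-\cB_{2s}+\beta$; moreover, since the radial part of a Neumann GFF on $\HH$ is radially symmetric and its circular part has zero mean on every semicircle around $0$, the complementary part of $\tilde{\wh h}$ (zero mean on each vertical segment) is distributed exactly as $h^{\GFF,\strip}_\cir$ restricted to $\strip^+$, independently of $\cB$. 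Comparing with Remark~\ref{rmk::stripwedge}, the surface $(\strip^+,\tilde{\wh h}+C,+\infty,\im\pi/2)$ is thus exactly a neighbourhood of the $+\infty$-end of a $(2,2)$-wedge-type surface, except that the field is shifted up by $\beta+C$ and the Bessel process is started from $v_0$ rather than from $0$.

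Next I would re-centre. For $C$ with $\beta+C>v_0$, let $2T_C:=\sup\{t:\cB_t=\beta+C\}$, which is a.s.\ positive and finite because $\cB$ is transient, and apply the strip translation $\zeta\mapsto\zeta+T_C$; this carries $\strip^+$ onto $\{s>-T_C\}\times[0,\pi]$ and adds no constant to the field. The horizontal profile of the translated field at coordinate $s$ is then $-(\cB_{2T_C+2s}-(\beta+C))$. For $s\geq 0$, the last-exit decomposition of $\cB$ from the level $\beta+C$ (see \cite{Wil74}, as also used in the proof of Lemma~\ref{prop:strip}) shows that $(\cB_{2T_C+2s}-(\beta+C))_{s\geq 0}$ is a Bessel$(3)$ process $\cB'$ started from $0$ and independent of everything generated by $\cB$ on $[0,2T_C]$; hence on $s\geq 0$ the re-centred profile equals $-\cB'_{2s}$ \emph{exactly}, for every such $C$, and in particular independently of $v_0$, of $A_0$, and of $\wh h_\cir$. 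For $s<0$ the profile equals $\cB_{2T_C}-\cB_{2T_C-2|s|}$, the increment of $\cB$ read backwards from its last visit to the high level $\beta+C$; by Williams' time reversal this backwards increment is a standard Brownian motion killed at the level $\beta+C-v_0$, so as $C\to\infty$ (whence $\beta+C-v_0\to\infty$ and $2T_C\to\infty$) it converges on compact sets, in law, to a two-sided standard Brownian motion $\wh B_{-2s}$, $s\leq 0$. Meanwhile $\tilde{\wh h}_\cir\eqD h^{\GFF,\strip}_\cir|_{\strip^+}$ and $h^{\GFF,\strip}_\cir$ is translation invariant on $\strip$, so after translation it converges on any fixed compact set to $h^{\GFF,\strip}_\cir$, while the closed end of $\strip^+$ together with the image $\im\pi/2$ of $i$ is carried off to $-\infty$. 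Collecting these facts and comparing with Remark~\ref{rmk::stripwedge}, the re-centred surfaces converge in law, in the sense of doubly-marked $2$-quantum surfaces, to a $(2,2)$-quantum wedge; since the $A_0$-reweighting and the value $v_0$ disappear in the limit, the limit does not depend on them.

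The step I expect to be the main obstacle is making the limiting statement of the previous paragraph rigorous in the topology of doubly-marked quantum surfaces. Concretely, one must realise all the re-centred surfaces on one reference domain (say all of $\strip$, or $\HH$) by post-composing with $C$-dependent conformal maps, and check both that these maps converge appropriately and that the associated field convergence holds in $H^{-1}_{\op{loc}}$; in particular this requires the Brownian limit of the backwards Bessel increments to hold uniformly on compacts (the role of \cite{Wil74}), and a verification that the closed end of the half-strip and the finite image of $i$ genuinely escape to the boundary in the limit, so that no mass or marked-point information is lost. This is technically of the same flavour as Lemma~\ref{prop:wedge-transform}. A minor additional point is to justify the claimed description of $\wh h_\cir$ as an exact restriction of $h^{\GFF,\strip}_\cir$, using the orthogonal decomposition $H(\wh B(0,1))=H_1(\wh B(0,1))\oplus H_2(\wh B(0,1))$.
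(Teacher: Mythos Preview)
Your approach is essentially the same as the paper's: both pass to the explicit description~(ii), analyse the radial part via Williams' last-exit decomposition of the Bessel$(3)$ process at level $\beta+C$, and exploit scale/translation invariance of the circular part (you work on the strip, the paper stays on $\HH$, which is purely cosmetic). One point to correct: in~(ii) the field $\wh h_\cir$ is the projection of the \emph{mixed-boundary} GFF $h$ (from Proposition~\ref{prop:zoom}) onto $H_2(\wh B(z,1))$, not of a Neumann GFF on $\HH$, so your claim that $\tilde{\wh h}_\cir\eqD h^{\GFF,\strip}_\cir|_{\strip^+}$ is not exact; the paper handles this by observing (via the Markov property applied in $\wh B$) that $\wh h_\cir$ and $h^{\GFF}_\cir|_{\wh B}$ can be coupled to differ by a function continuous up to $\partial\wh B\cap\RR$, so that under the rescaling the difference tends to a random constant, which is absorbed into $C$. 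This is exactly the ``minor additional point'' you flag at the end, so your outline is complete once that correction is made.
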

	\begin{proof}
		One can check that the proof of Lemma \ref{prop:wedge-transform}  works  identically if we condition only on $z$ in (i) and (ii) rather than on $(z,\nu_h([a,z]),\nu_h([z,b])$. By this variant of Lemma \ref{prop:wedge-transform}, proving Lemma \ref{prop:zoom0} is equivalent to showing that conditioned on $z$, the quantum surface $(\HH,\wh h+C,z,\infty)$, with $\wh h$ viewed as a distribution on $\HH$, (i.e., we set $\wh h$ equal to 0 outside of $\wh B(z,1)$) converges in 
		law to a $(2,2)$-quantum wedge. Write $\wh B=\wh B(z,1)$ to simplify notation.
		Decompose $\wh h=\wh h_\cir+\wh h_\rad$, where $\wh h_\cir\in H_2(\wh B)$ and $\wh h_\rad\in H_1(\wh B)$. By the Markov 
		property, both the mixed GFF in $D$ and the Neumann GFF in $\HH$, when restricted to $\wh B$, can be written as the sum of a mixed GFF in $\wh B$ (with free boundary conditions on $\partial \wh B\cap\RR$ and zero boundary conditions on $\partial\wh B\setminus\RR$) plus a harmonic function that extends continuously to $\partial\wh B\cap\RR$. Therefore $\wh h_\cir$ and 
		$h_\cir^{\GFF}|_{\wh B}$ can be coupled 
		together so they differ by a random function which extends continuously to $\wh B\cap\RR$. In particular, $\wh h_\cir$ and 
		$h_\cir^{\GFF}|_{\wh B}$ can be coupled so that $\wh h_\cir(c\cdot)-h_\cir^{\GFF}|_{\wh B}(c\cdot)$ converges a.s.\ to a random constant as $c\rta 0$. It is therefore sufficient to show that if $h_\cir^{\GFF}$ is independent of $\wh h_\rad$ then $(\HH,h_\cir^{\GFF}+\wh h_\rad+C,z,\infty)$ converges in law to a $(2,2)$-quantum wedge as $C\rta\infty$.
		
		By Lemma \ref{lem:rooted2}, $\wh h_\rad$ can be coupled together with $A$ in (ii) of that lemma such that $\wh h_\rad(x)=A_{-\log|x-z|}$. Recall that $A$ can be coupled together with a 3-dimensional Bessel process $(\cB_s)_{s\geq 0}$ started from $-A_0+\beta$ such that $A_s=-\cB_{2s}+2s+\beta$. For $C>1$ define
		\eqbn
		\begin{split}
			&T^1_C = \inf\{ s\geq 0\,:\, \cB_{2s}=C+\beta \},\qquad
			T^3_C = \sup\{ s\geq 0\,:\, \cB_{2s}=C+\beta \},\\
			&T^2_C = \underset{s\in[T^1_C,T^3_C]}{\op{argmin}} \cB_{2s},\qquad\qquad\qquad\qquad
			\theta=\inf\{ \cB_{2s}\,:\,s\in[T^1_C,T^3_C] \}=\cB_{2T^s_C}.
		\end{split}
		\eqen
		Note that $(\cB_{t+2T^1_C})_{t\geq 0}$ has the law of a Bessel process started from $C+\beta$. By \cite[Theorem 3.5]{Wil74}, $\theta$ has the law of a uniform random variable on $[0,C+\beta]$, and, conditioned on $\theta$, 
		\begin{compactitem}
			\item[(i)] the process $(\cB_{s+2T^3_C}-(C+\beta) )_{s\geq 0}$ has the law of a Bessel process started from 0, and 
			\item[(ii)] $(\cB_{-s+2T^3_C} )_{s\in [0,2T^3_C-2T^2_C]}$ has the law of a Brownian motion started from $C+\beta$ and stopped at the first time it reaches $\theta$. 
		\end{compactitem}
		It follows that as $C\rta\infty$ the process $(\cB_{s+2T^3_C}-(C+\beta) )_{s\in\RR}$ converges in law to the \emph{negative} of the process considered in Remark \ref{rmk:wedge-strip} on any compact interval. Therefore $(-\cB_{s+2T^3_C}+(C+\beta)+2s)_{s\in\RR}$ converges in law to the process  $(h_{\rad}(e^{-s}))_{s\in\RR}$ in Definition \ref{def:wedge}, which concludes the proof.
	\end{proof}
	
	\begin{lemma} Assume the same set-up as in Lemma \ref{prop:zoom0}, except now without the assumption that $I$ is bounded away from $\HH\setminus D$ and the assumption \eqref{eq:Idist}. Then as $C\to \infty$ and conditioned on $(z,d_h^\beta([a,z]),d_h^\beta([z,b]))$, the surface $(\wh{B}(z,1),\wh{h}+C,z,z+i)$ converges in law to a $(2,2)$-quantum wedge (where $\wh h$ is identically equal to 0 on $\wh B(z,1)\setminus D$).
		\label{lem:zoom0more}
	\end{lemma}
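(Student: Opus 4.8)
The plan is to deduce Lemma \ref{lem:zoom0more} from Lemma \ref{prop:zoom0} in two stages: a \emph{localisation} stage that removes the geometric hypotheses on $I$ and $D$ (reducing to the case where \eqref{eq:Idist} holds and $\wh h=h|_{\wh B(z,1)}$), and a stage that shows the extra conditioning on the two boundary masses $\ell_1:=d_h^\beta([a,z])$ and $\ell_2:=d_h^\beta([z,b])$ does not affect the limiting surface.

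\emph{Localisation.} As in the proof of Lemma \ref{prop:zoom0}, the variant of Lemma \ref{prop:wedge-transform} in which one conditions only on $z$ shows that whether $(\wh B(z,1),\wh h+C,z,z+i)$ converges to a $(2,2)$-quantum wedge as $C\to\infty$ depends only on the field $\wh h$ in an arbitrarily small neighbourhood of $z$: the domain $\wh B(z,1)$ may be shrunk to $\wh B(z,\rho)$ and the secondary marked point moved accordingly without changing the convergence statement. By the regularity assumed on $\phi$ we have $\ep_z>0$ for every $z\in I$, so $I=\bigcup_k I_k$ with each $I_k$ an interval on which $\inf\ep_z>0$; disjointifying the $I_k$ and conditioning on the (positive-probability) event $\{z\in I_k\}$, then rescaling space, we may assume that \eqref{eq:Idist} holds and that $\wh h=h|_{\wh B(z,1)}$. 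This puts us in the setting of Lemma \ref{prop:zoom0}, so it remains only to incorporate the conditioning on $(\ell_1,\ell_2)$.

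\emph{The extra conditioning.} It suffices to show that, conditionally on $z$, the zoomed surface becomes asymptotically independent of $(\ell_1,\ell_2)$ as $C\to\infty$; combined with Lemma \ref{prop:zoom0} this gives the claim. I would use the explicit description of Lemma \ref{lem:rooted2}(ii): write $\wh h=\wh h_\cir+\wh h_\rad$ with $\wh h_\cir$ and $\wh h_\rad$ independent and $\wh h_\rad(e^{-s})=A_s$, $A_s=-\cB_{2s}+2s+\beta$ for a Bessel$(3)$ process $\cB$. Fix a large $s_0$. On the one hand, $\ell_1=d_h^\beta([z-e^{-s_0},z])+R_1$ and $\ell_2=d_h^\beta([z,z+e^{-s_0}])+R_2$, where the remainders $R_1,R_2$ depend on $\cB$ only through $\cB|_{[0,2s_0]}$ (the $d_h^\beta$-mass of a set at distance $\geq e^{-s_0}$ from $z$ sees the radial part only through $A_s$ for $s\le s_0$), while $d_h^\beta([z-e^{-s_0},z])$ and $d_h^\beta([z,z+e^{-s_0}])$ tend to $0$ as $s_0\to\infty$ because $d_h^\beta$ is a.s.\ atomless. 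On the other hand, exactly as in the proof of Lemma \ref{prop:zoom0}, the zoomed surface converges (via the Williams path decomposition \cite[Theorem 3.5]{Wil74}) to a functional of $\cB$ in a window around its last hitting time $T^3_C$ of level $C+\beta$, together with $\wh h_\cir$ near $z$; and $T^3_C\to\infty$, so eventually $2T^3_C>2s_0$. Since $\cB$ is Markov, conditioning it on a functional of $\cB|_{[0,2s_0]}$ changes its evolution after time $2s_0$ only through the law of $\cB_{2s_0}$, and the Williams decomposition of the post-$2s_0$ Bessel process --- started from \emph{any} value --- still yields the same stationary-wedge limit as $C\to\infty$. Taking $C\to\infty$ first and then $s_0\to\infty$ gives the convergence conditionally on $(z,\ell_1,\ell_2)$.

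\emph{Main obstacle.} The delicate stage is the second one, and the sensitive point within it is the assertion that $\ell_1$ and $\ell_2$ depend on the microscopic behaviour of the field near $z$ only through a contribution that vanishes as $s_0\to\infty$. This is subtle because $d_h^\beta$ is built from semi-circle averages on scales up to $1$, so its restriction to a shrinking neighbourhood of $z$ is not literally a function of the field there; making the ``macroscopic versus microscopic'' split precise requires the radial/circular decomposition of Lemma \ref{lem:rooted2}(ii) and the atomlessness of $d_h^\beta$, after which the Markov property of $\cB$ lets one transfer the conditioning past the window around $T^3_C$ that governs the limit. The localisation stage, by contrast, is routine once one has the version of Lemma \ref{prop:wedge-transform} that conditions only on $z$ (whose proof is indicated there).
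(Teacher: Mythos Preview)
Your localisation stage matches the paper's: one observes that the proof of Lemma \ref{prop:zoom0} goes through with the constant $1$ in \eqref{eq:Idist} replaced by any $r\in(0,1)$, and since $d_h^\beta$ is a.s.\ atomless one may approximate $I$ by subintervals satisfying the geometric hypothesis and let $r\downarrow 0$.

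For the conditioning stage the paper does not argue directly but appeals to the proof of \cite[Proposition 5.5]{Sh16}; your attempt to give an explicit argument is therefore a genuinely different route. However, as written it has two gaps. First, the assertion that $R_1=d_h^\beta([a,z-e^{-s_0}])$ and $R_2$ ``see the radial part only through $A_s$ for $s\le s_0$'' is not literally correct. The definition \eqref{eq:d-beta-eps} involves the indicator $\I\{h_\delta(w)/2<\log(1/\delta)+\beta\ \forall\delta\in[\eps,1]\}$, which uses semi-circle averages at \emph{all} radii $\delta\le 1$; for $w$ near $z-e^{-s_0}$ and $\delta$ of order $e^{-s_0}$ the semi-circle $\partial\wh B(w,\delta)$ enters $\wh B(z,e^{-s_0})$ and hence probes $A_s$ for arbitrarily large $s$. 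You flag this in your ``Main obstacle'' paragraph, but atomlessness of $d_h^\beta$ controls only the \emph{mass} near $z$, not this residual dependence of the indicator on the microscopic field.

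Second, and more importantly, your Markov argument treats only the radial part $\cB$. The quantities $\ell_1,\ell_2$ are functionals of the full field $h$, hence also of $\wh h_\cir$ (and in general of $h$ outside $\wh B(z,1)$); conditioning on them changes the law of $\wh h_\cir$, not only of $\cB$. The proof of Lemma \ref{prop:zoom0} disposes of $\wh h_\cir$ by coupling it to $h_\cir^{\GFF}$ modulo a continuous correction and invoking scale invariance, but that coupling uses the \emph{unconditional} law of $\wh h_\cir$; after conditioning on $(\ell_1,\ell_2)$ you must explain why the small-scale circular part is still asymptotically $h_\cir^{\GFF}$. A cleaner route is to use the domain Markov property of the full field in a small half-disk $\wh B(z,e^{-s_0})$ rather than the one-dimensional Markov property of $\cB$: conditioning on everything outside then perturbs the inside only by a harmonic function continuous up to $\RR$, which is absorbed into $C$ upon zooming. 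This is closer to the mechanism behind \cite[Proposition 5.5]{Sh16} that the paper invokes.
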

	\begin{proof}
		First we will argue that $d_h^\beta$ is atomless a.s. Notice that $d_{h,\eps}^\beta(dz)\leq\nu_{h,\eps}(dz)+\beta \eps \e^{h_\eps(z)}dz$ (with equality on the event $\cC_\beta$; see Remark \ref{rmk::cb}). Since $\beta \eps \e^{h_\eps(z)}$ converges a.s.\ to 0 and $\nu_{h,\eps}(dz)$ converges a.s.\ to the non-atomic measure $\nu_h$ as $\ep\rta 0$, this implies that $d_h^\beta$ is atomless a.s.
		
		Now observe that the proof of Lemma \ref{prop:zoom0} above carries through just as before if we replace the $1$ on the right side of \eqref{eq:Idist} with some other constant $r\in(0,1)$. Then we see that Lemma \ref{prop:zoom0} also holds if $I$ is \emph{not} bounded away from $\HH\setminus D$, since any interval contained in $\partial D\cap\RR$ can be approximated arbitrarily well by an interval satisfying \eqref{eq:Idist} for some $r\in(0,1)$. This implies, since $d_h^\beta$ is atomless, that the point $z$ in the former case converges in total variation distance to the point $z$ in the latter case when $r\rta 0$. 
		
		From Lemma \ref{prop:zoom0} (without the assumption that $I$ is bounded away from $\HH\setminus D$), and by proceeding exactly as in the proof of \cite[Proposition 5.5]{Sh16}, we get that Lemma \ref{prop:zoom0} also holds if we condition on $d_h^\beta([a,z])$ and $d_h^\beta([z,b])$. 
	\end{proof}

	\begin{lemma}
		Let $(X_n,Y_n)$ for $n\in\N$ and $(X,Y)$ be random variables such that the vectors $(X_n,Y_n)$ converge in total variation distance to $(X,Y)$ as $n\rta\infty$. Assume $Y_n,Y$ are vectors in $\R^N$ for some $N\in\N$, while $X_n,X$ take values in some Borel space $(S,\cS)$.
		Then there exists a set $\mcl A\subset\R^N$ such that $\P[Y\in\mcl A]=1$, and such that for any $a\in\mcl A$ the law of $X_n$ given $Y_n=a$ converges to the law of $X$ given $Y=a$.\footnote{The conditional law of $X$ (resp.\ $X_n$) given $Y=a$ (resp.\ $Y_n=a$) exists for almost all $a$ sampled from the law of $Y$ (resp.\ $Y_n$) by e.g.\ \cite[Section 5.1.3]{durrett10}. Proceeding as in e.g.\ \cite[Exercise 33.16]{billingsley95} one can argue that  $\mathbb{P}(X\in \cdot \; | Y=a)=\lim_{\delta\to 0} \mathbb{P}(X\in \cdot \; | \|Y-a\|_\infty<\delta)$ for almost all $a$, and the same statement holds for $X_n,Y_n$ instead of $X,Y$.}
		\label{prop:cond}
	\end{lemma}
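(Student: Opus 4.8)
The plan is to characterise the conditional law $\mu^a := \mathbb{P}(X\in\cdot\mid Y=a)$, and likewise $\mu_n^a := \mathbb{P}(X_n\in\cdot\mid Y_n=a)$, as a limit of conditional laws on shrinking cubes around $a$, and then to use that total variation convergence of the \emph{joint} laws forces convergence of these ``fattened'' conditional laws at a fixed scale. Since $(S,\cS)$ is a Borel space these regular conditional distributions exist as probability kernels (defined $\mathcal{L}(Y)$-a.e., resp.\ $\mathcal{L}(Y_n)$-a.e.), and by embedding $S$ measurably into $[0,1]$ we may fix a countable family $(g_j)_{j\in\N}$ of bounded measurable functions on $S$ that is convergence-determining for probability measures on $S$; it then suffices to produce a single $\nu$-full Borel set $\cA$, where $\nu:=\mathcal{L}(Y)$, on which $\int g_j\, d\mu_n^a \to \int g_j\, d\mu^a$ for every $j$ as $n\to\infty$.

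First I would record the differentiation input. For $k\in\N$ and $a\in\RR^N$ let $Q_k(a)$ denote the half-open dyadic cube of side $2^{-k}$ containing $a$. For fixed $j$, the map $a\mapsto\int g_j\, d\mu^a$ lies in $L^1(\nu)$ and $a\mapsto \mathbb{E}[g_j(X)\mid Y\in Q_k(a)]$ is precisely its conditional expectation relative to the $\sigma$-algebra generated by the level-$k$ dyadic cubes (under $\nu$); the martingale convergence theorem --- equivalently, the Lebesgue--Besicovitch differentiation theorem indicated in the footnote --- gives $\mathbb{E}[g_j(X)\mid Y\in Q_k(a)]\to\int g_j\, d\mu^a$ as $k\to\infty$ for $\nu$-a.e.\ $a$, together with the analogous statement for each $n$ with $(X_n,Y_n,\mathcal{L}(Y_n))$ in place of $(X,Y,\nu)$. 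Intersecting these countably many almost-sure statements, and using that a $\mathcal{L}(Y_n)$-null set has $\nu$-measure at most $2\,d_{\mathrm{TV}}(\mathcal{L}(Y_n),\nu)$ to control the total exceptional set, yields a $\nu$-full Borel set $\cA$ on which $\mu^\cdot$ and every $\mu_n^\cdot$ are defined, the cubes $Q_k(a)$ have positive $\nu$-mass for all $k$, and all of the above dyadic limits hold.

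Second, the total variation hypothesis enters at a fixed scale: for every dyadic cube $Q$ and bounded measurable $g$,
\[
\big| \mathbb{E}[g(X_n)\1_{\{Y_n\in Q\}}] - \mathbb{E}[g(X)\1_{\{Y\in Q\}}] \big| \le \|g\|_\infty\, d_{\mathrm{TV}}\big(\mathcal{L}(X_n,Y_n),\mathcal{L}(X,Y)\big)\longrightarrow 0,
\]
so in particular $\mathcal{L}(Y_n)(Q)\to\nu(Q)$, and whenever $\nu(Q)>0$ we get $\mathbb{E}[g(X_n)\mid Y_n\in Q]\to\mathbb{E}[g(X)\mid Y\in Q]$ as $n\to\infty$. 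Combined with the first step, for every $a\in\cA$ this produces
\[
\int g_j\, d\mu_n^a = \lim_{k\to\infty}\mathbb{E}[g_j(X_n)\mid Y_n\in Q_k(a)], \qquad \int g_j\, d\mu^a = \lim_{k\to\infty}\ \lim_{n\to\infty}\mathbb{E}[g_j(X_n)\mid Y_n\in Q_k(a)].
\]

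The main obstacle --- and the only substantial point --- is to pass from these two displays to $\int g_j\, d\mu_n^a\to\int g_j\, d\mu^a$; this amounts to interchanging the outer $k\to\infty$ limit with the $n\to\infty$ limit, for which one needs the dyadic martingale convergence for the $n$-th pair to hold uniformly enough in $n$. The plan is to extract this uniformity from the total variation convergence of the \emph{joint} laws rather than only of the marginals: at each level $k$, summing over the dyadic cubes $Q$ of that level gives $\sum_Q \big|\mathbb{E}[g_j(X_n)\1_{\{Y_n\in Q\}}]-\mathbb{E}[g_j(X)\1_{\{Y\in Q\}}]\big|\le 2\|g_j\|_\infty\, d_{\mathrm{TV}}(\mathcal{L}(X_n,Y_n),\mathcal{L}(X,Y))$ and $\sum_Q|\mathcal{L}(Y_n)(Q)-\nu(Q)|\le 2\, d_{\mathrm{TV}}(\mathcal{L}(Y_n),\nu)$, both bounds being uniform in $k$; feeding this level-by-level control into the telescoping martingale sums $\int g_j\, d\mu_n^a - \mathbb{E}[g_j(X_n)\mid Y_n\in Q_k(a)] = \sum_{\ell\ge k}\big(\mathbb{E}[g_j(X_n)\mid Y_n\in Q_{\ell+1}(a)]-\mathbb{E}[g_j(X_n)\mid Y_n\in Q_\ell(a)]\big)$, and comparing with the corresponding convergent sum for the limit pair, is what one would use to close the estimate. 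Once the interchange is justified, the fact that $(g_j)_{j\in\N}$ is convergence-determining gives $\mu_n^a\to\mu^a$ for every $a\in\cA$, which is the claim.
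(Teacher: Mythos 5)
You identify the right reduction (differentiation at dyadic scales, fixed-scale convergence of the ``fattened'' conditional laws), and you correctly flag the crux: the interchange of the $k\to\infty$ and $n\to\infty$ limits. But the mechanism you propose for this interchange does not close, and this is a genuine gap. Your level-by-level bound
$\sum_{Q}\bigl|\mathbb{E}[g_j(X_n)\1_{\{Y_n\in Q\}}]-\mathbb{E}[g_j(X)\1_{\{Y\in Q\}}]\bigr|\le 2\|g_j\|_\infty\, d_{\mathrm{TV}}$
is uniform over levels $\ell$ --- it does \emph{not} decay as $\ell\to\infty$ for fixed $n$ --- so feeding it into the tail $\sum_{\ell\ge k}$ of the telescoping martingale sum produces a divergent bound, not a small one. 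Moreover, if instead of bounding term-by-term you compare the two telescoped sums as a whole, what you are estimating is precisely $\bigl(\int g_j\,d\mu_n^a-\mathbb{E}[g_j(X_n)\mid Y_n\in Q_k(a)]\bigr)-\bigl(\int g_j\,d\mu^a-\mathbb{E}[g_j(X)\mid Y\in Q_k(a)]\bigr)$, which, after adding back the fixed-scale comparison, is the quantity you wanted to control in the first place; the argument is circular. There is also a smaller issue: your device for collecting the exceptional differentiation-failure sets over all $n$ (via $\nu(\text{null set for }\mathcal{L}(Y_n))\le d_{\mathrm{TV}}(\mathcal{L}(Y_n),\nu)$) gives a bound that need not be summable in $n$, so the ``total exceptional set'' is not obviously $\nu$-null without at least passing to a subsequence.

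The paper sidesteps the uniformity-in-$n$ problem entirely by reversing the order of quantifiers: fix $\varepsilon>0$, choose $n$ large enough that $d_{\mathrm{TV}}(\mathcal{L}(X_n,Y_n),\mathcal{L}(X,Y))<\varepsilon^2$, and \emph{only then} choose the scale $\delta>0$ adapted to this fixed $n$ (so that conditioning on $\|Y-a\|_\infty<\delta$ and on $\|Y_n-a\|_\infty<\delta$ are both $\varepsilon$-close to the point conditionings, on a large set $\cA_0$). The remaining work is a Vitali-type covering argument in $\R^N$ (with factor $12^N$) which bounds the $\nu$-measure of the ``bad'' set --- the points $a$ where the $\delta$-neighbourhood conditional laws of $(X_n,Y_n)$ and $(X,Y)$ still differ by more than $\varepsilon$ in total variation --- by a constant multiple of $\varepsilon$. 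Your approach needs a uniform-in-$n$ rate for the dyadic martingale convergence, which the hypothesis does not provide; the paper's proof never needs such a rate, because $\delta$ is chosen after $n$.
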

	\begin{proof}
		Let $\eps>0$. It is clearly sufficient to prove the lemma under the weaker requirement that $\mcl A$ satisfies $\P[Y\in\mcl A]\geq 1-(2\cdot 12^N+1)\ep$. For this, it suffices to show that for an arbitrary function $F:S\to\{0,1 \}$, any $a\in\mcl A$, and all sufficiently large $n$,
		\eqb
		\big|\P\big[ F(X)=1\,\big|\,Y=a \big]-\P\big[ F(X_n)=1\,\big|\,Y_n=a \big]|\leq 3\ep.
		\label{eq:cond0}
		\eqe
		Choose $n$ sufficiently large such that the total variation distance between $(X_n,Y_n)$ and $(X,Y)$ is smaller than $\eps^2$. We will work with such a fixed choice of $n$ in the remainder of the proof, and will prove that \eqref{eq:cond0} is satisfied. 
		
		Choose $\delta>0$ sufficiently small such that for all $a$ in a set $\mcl A_0\subset\R^N$ satisfying $\P[Y\in\mcl A_0]>1-\eps/2$ the following hold:
		\eqb
		\begin{split}
			&\big|\P\big[ F(X)=1\,\big|\,Y=a \big] - 
			\P\big[ F(X)=1\,\big|\,\|Y-a\|_\infty<\delta \big]\big|<\eps;\\
			&\big|\P\big[ F(X_n)=1\,\big|\,Y_n=a \big] - 
			\P\big[ F(X_n)=1\,\big|\,\|Y_n-a\|_\infty<\delta \big] \big|<\eps.
		\end{split}
		\label{eq:cond}
		\eqe Let $K\subset\mathcal{A}_0$ be a compact set such that $\P[Y\in K]>1-\eps$. For any $a\in\R^N$ define $\mcl N(a)=\{z\in\R^N\,:\,\|z-a\|_\infty\leq\delta \}$. Say that a point $a\in K$ is \emph{bad} if $\P[Y\in\mcl N(a)]=0$, $\P[Y_n\in\mcl N(a)]=0$, or the total variation distance between $(X_n,Y_n)$ and $(X,Y)$ conditioned on $Y_n\in \mcl N(a)$ and $Y\in \mcl N(a)$, respectively, is at least $\eps$. A point in $K$ which is not bad is \emph{good}. Let $\mcl B\subset K$ denote the set of bad points. We will prove that 
		\eqb
		\P[Y\in\mcl B] \leq 2\cdot 12^N\eps.
		\label{eq:bad}
		\eqe	
		Taking $\mathcal{A}=K\setminus \mcl B$ and applying \eqref{eq:cond} then completes the proof.
		
		Choose points $a_1,\dots,a_M\in \mcl B$ for some $M\in\N$ using the following rule. Given $a_1,\dots,a_m$ let $a_{m+1}\in \mcl B$ be chosen such that $\mcl N(a_{m+1})$ is disjoint from $\mcl N(a_1),\dots,\mcl N(a_m)$, and such that $\P[Y\in\mcl N(a_{m+1})]$ is maximized. Let $M$ be the smallest $m$ such that there is no possible way to choose $a_{m+1}$ (i.e., all points in $\mcl B$ have $\|\cdot\|_\infty$ distance less than $\delta$ from $\mcl N(a_1)\cup\dots\cup\mcl N(a_m)$). Define $\frk m=\P[Y\in\mcl N(a_1)]+\dots+\P[Y\in\mcl N(a_M)]$. 
		
		The idea for the proof of \eqref{eq:bad} is that $\frk m$ has to be small because the total variation distance between $(X_n,Y_n)$ and $(X,Y)$ is assumed to be small, and that by the definition of the $\{\mathcal{N}(a_i)\}_i$, $\mathbb{P}(Y\in \mathcal{B})$ is of order $O(\mathfrak{m})$.
		
		Proceeding with the details, since $\mcl N(a_1),\dots,\mcl N(a_M)$ are disjoint, we can bound the total variation distance between $(X_n,Y_n)$ and $(X,Y)$ from below by summing the contribution from each set $\mcl N(a_m)$. More precisely, for arbitrary Borel (not necessarily probability) measures $\sigma_1,\sigma_2$ defined on $\R^N$, define
		$$
		d_{\op{tv}}(\sigma_1,\sigma_2)= \sup_{A\subset\R^N} |\sigma_1(A)-\sigma_2(A)|,
		$$
		and note that this defines a metric on the set of Borel measures on $\R^N$. 
		Let $\sigma_n$ (resp.\ $\sigma$) denote the law of $(X_n,Y_n)$ (resp.\ $(X,Y)$), and let $\sigma_n^m=\sigma_n|_{\mcl N(a_m)}$ and $\sigma^m=\sigma|_{\mcl N(a_m)}$.
		For an arbitrary measure $\wh\sigma$ let $|\wh\sigma|$ denote its total mass.
		By the triangle inequality and since 
		$d_{\op{tv}}(\sigma^m_n,\frac{|\sigma^m|}{|\sigma^m_n|}\sigma^m_n)=\big||\sigma^m_n|-|\sigma^m|\big|\leq d_{\op{tv}}(\sigma^m,\sigma^m_n)$,
		$$
		d_{\op{tv}}\Big(\frac{\sigma^m}{|\sigma^m|},\frac{\sigma^m_n}{|\sigma^m_n|}\Big)
		\leq 
		\frac{1}{|\sigma^m|} d_{\op{tv}}(\sigma^m,\sigma^m_n)
		+ \frac{1}{|\sigma^m|} d_{\op{tv}}\Big(\sigma^m_n,\frac{|\sigma^m|}{|\sigma^m_n|}\sigma^m_n\Big)
		\leq \frac{2}{|\sigma^m|} d_{\op{tv}}(\sigma^m,\sigma^m_n).
		$$
		Using $\frk m=\sum_{m=1}^{M}|\sigma^m|$ we now get
		\eqbn
		\frac 12 \frk m \eps
		\leq \sum_{m=1}^M \frac 12 |\sigma^m| \cdot d_{\op{tv}}\Big( \frac{\sigma^m}{|\sigma^m|}, \frac{\sigma_n^m}{|\sigma_n^m|} \Big)
		\leq \sum_{m=1}^M d_{\op{tv}}( \sigma^m,\sigma_n^m )
		\leq d_{\op{tv}}( \sigma,\sigma_n )
		< \eps^2,
		\eqen
		which gives $\frk m\leq 2\eps$. Using this, we get \eqref{eq:bad} if we can prove the following
		\eqb
		\P[Y\in\mcl B]\leq 12^N\frk m.
		\label{eq:bad1}
		\eqe
		Let $\mcl M(a_m)$ be the box of side length $6\delta$ centred at $a_m$, minus the union of $\mcl M(a_1),\dots\mcl M(a_{m-1})$. To prove \eqref{eq:bad1} it is sufficient to show that: (i) $\P[Y\in\mcl B\cap\mcl M(a_m)]\leq 12^N\P[Y\in\mcl N(a_m)]$, and (ii) $\mcl B\subset \mcl M(a_1)\cup\dots\cup \mcl M(a_M)$, since then we have
		$$
		\P[ Y\in\mcl B ]
		\leq \sum_{m=1}^{M} \P[Y\in \mcl B\cap\mcl M(a_m) ]
		\leq \sum_{m=1}^{M} 12^N \P[ Y\in \mcl N(a_m) ]
		= 12^N \frk m.
		$$
		Assertion (i) follows upon dividing the box of side length $6\delta$ centred at $a_m$ 
		(which contains $\mcl M(a_m)$) into $12^N$ boxes $b$ of side length $\delta/2$, and using that, by the definition of $a_m$, if $b\cap \mcl M(a_m)\cap\mcl B\neq\emptyset$ then $\P[Y\in b]\leq \P[Y\in\mcl N(a_m)]$. Assertion (ii) follows by using the definition of $M$ and since (by the definition of $\mcl M(a_1),\dots,\mcl M(a_M)$) points in the complement of $\mcl M(a_1)\cup\dots\cup \mcl M(a_M)$ have distance at least $2\delta$ from $\mcl N(a_1)\cup\dots\cup \mcl N(a_M)$. We conclude that \eqref{eq:bad} and \eqref{eq:bad1} both hold.
	\end{proof}

	\begin{proof2}{Proposition \ref{prop:zoom}}
		Consider the law on $(z,h)$ that can be sampled from as follows. First sample $h$ from $\P$ (i.e., as in Proposition \ref{prop:zoom}). Then, on the event $d_h^\beta(I)>0$ sample $z$ from $d_h^\beta$ normalised to be a probability measure, and otherwise sample $z$ from Lebesgue measure on $I$.
		
		Write $\wh h$ for $h$ restricted to $\wh B(z,1)$. Observe that on the event $\{d^\beta_h(I)>0\}$, the conditional law of $\wh h$ given $(z,d_h^\beta([a,z]),d_h^\beta([z,b]))$ is exactly the same as the conditional law of $\wh h$ given $(z,d_h^\beta([a,z]),d_h^\beta([z,b]))$ in Lemma \ref{lem:zoom0more}. This is because we have conditioned on the Radon--Nikodym derivative between the two different laws on $h$. Hence, under the law on $(z,h)$ just defined, on the event that $d_h^\beta(I)>0$ and conditionally on $(z,d_h^\beta([a,z]),d_h^\beta([z,b]))$, \[(\wh{B}(z,1),\wh{h}+C,z,z+i) \text{ converges in law to a } (2,2) \text{-quantum wedge as } C\to \infty.\]  
		Finally, by Lemma \ref{lem::d-nu}, Remark \ref{rmk::dbetanu}, and Lemma \ref{prop:cond}, letting $\beta\to \infty$, we can conclude that if we sample $h$ from $\P$, then sample $z$ from $\nu_h$, and let $\wh h$ be $h$ restricted to $\wh B(z,1)$, then we have that conditionally on $(z,\nu_h([a,z]),\nu_h([z,b]))$, $(\wh{B}(z,1),\wh{h}+C,z,z+i)$ converges in law to a $(2,2)$-quantum wedge as $C\to \infty$.  
		Proposition \ref{prop:zoom} now follows upon application of Lemma \ref{prop:wedge-transform}.
	\end{proof2}

	\section{The critical quantum zipper via subcritical approximation}
	\label{sec:approx}
	
	The goal of this section is to prove Proposition \ref{lem::main_criticalzipperconv} below. 
	In this proposition it is shown that given a $(2,1)$-quantum wedge $(\HH,h,0,\infty)$, one can conformally weld together the intervals to the left and right of the origin with quantum boundary length one.
	Concretely, it provides the existence of a conformal map $f$ (measurable with respect to $h$) from $\HH$ to $\HH\setminus \wt \eta$, where $\wt \eta$ is a section of a simple curve starting from $0$, such that any two points to the left and right of $0$ with equal $\nu_h$ boundary length (less than one) are mapped to the same point on $\wt \eta$. Moreover, if one also starts with a curve $\eta$ on $(\HH,h,0,\infty)$, that is independent of $h$ and has the law of an $\SLE_4$ from 0 to $\infty$, then the new field/curve pair defined by $f(h)$ and $\wt \eta \cup f(\eta)$ has the same law as $(h,\eta)$. 
	
	The strategy is to use the fact that such an operation exists \cite{Sh16} in the subcritical case $\gamma\in(0,2)$, i.e., when the $\SLE_4$ is replaced by an $\SLE_{\gamma^2}$, the $(2,1)$-quantum wedge is replaced by a $(\gamma,\gamma-2/\gamma)$-quantum wedge, and critical boundary length is replaced by $\gamma$-LQG boundary length. See Figure \ref{fig:approx} for an illustration.
	We will show that a number of limits can be taken as $\gamma\uparrow 2$, using, for example, the fact
	that critical LQG measures can be obtained as a limit of subcritical measures (Lemma \ref{lem::cd_critwedge}). Combining these convergence statements provides the existence of the welding operation. Making use of Theorem \ref{thm2}, we can prove that the conformal map $f$ is measurable with respect to $\eta$. 
	
	\begin{figure}
		\centering
		\includegraphics[scale=1]{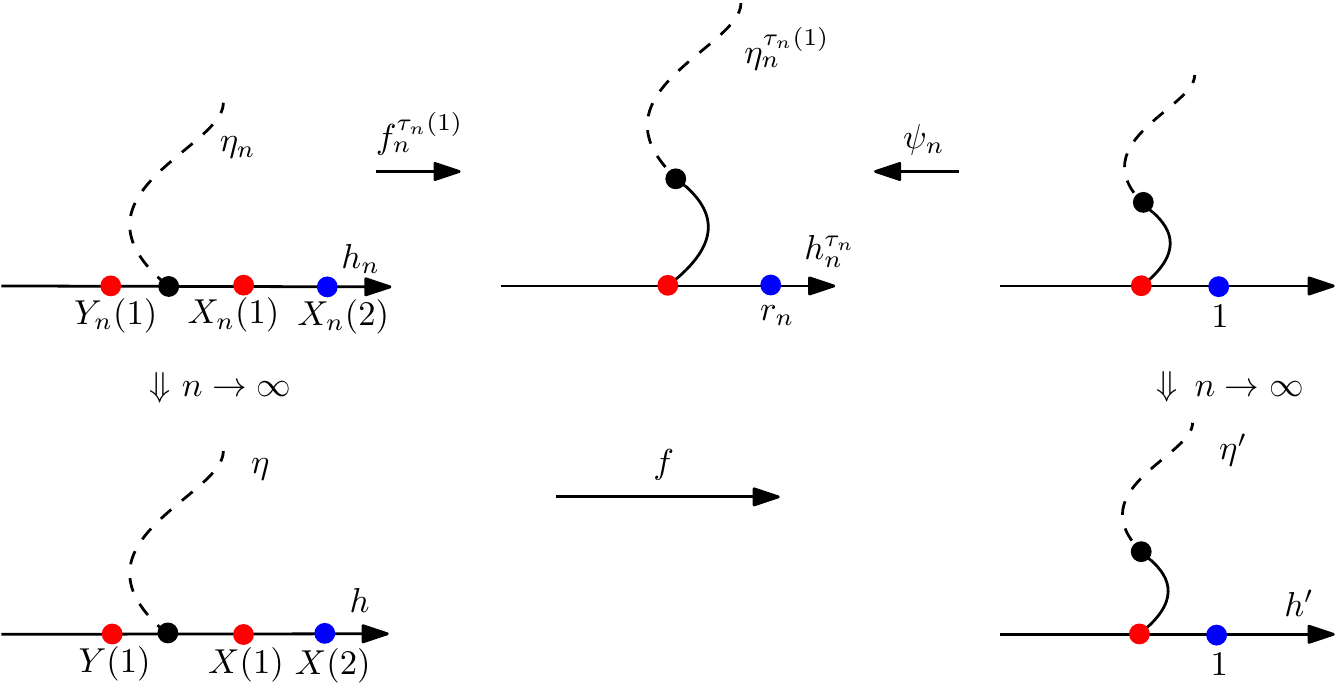}
		\caption{Illustration of objects defined in Section \ref{sec:criticalzipperapprox}. Our strategy is to construct the critical quantum zipper (lower row) by taking the $n\rta\infty$ limit of the subcritical quantum zipper (upper row). The convergence in law indicated by the two vertical arrows is joint as $n\rta\infty$. }
		\label{fig:approx}
	\end{figure}
	
	Let $(\gamma_n)_{n\in\N}$ be a sequence in $(0,2)$ with $\gamma_n\uparrow 2$ as $n\to \infty$, and set $\kappa_n=\gamma_n^2$. We will always denote by $h_n$ a random element of \en{$\Hloc(\HH)$} with the law of a $(\gamma_n,\gamma_n-2/\gamma_n)$-quantum wedge, as in Definition \ref{def:wedge}. That is, $(\HH,h_n,0,\infty)$ has the distribution of the equivalence class representative of a $(\gamma_n, \gamma_n-2/\gamma_n)$-quantum wedge in the last exit parameterisation. Given such an $h_n$ we denote by $\nu_n=(4-2\gamma_n)^{-1}\nu_{h_n}^{\gamma_n}$, the associated (renormalised) $\gamma_n$-Liouville boundary measure on $\RR$ (as in Lemma \ref{lem::defnlmqw}). For $q\in \mathcal{Q}:=\mathbb{Q}\cap [0,1]$, we denote 
	\eqb \label{eqn:XY} X_n(q)=\inf \{x\ge 0:\, \nu_n([0,x])\ge q\}; \;\; Y_n(q)=-\inf\{y\ge 0: \nu_n([-y,0])\ge q\}; \eqe so that $(X_n(q),Y_n(q))$ is a pair of points to the right and left, respectively, of $0$, with $\nu_n([Y_n(q),0])=\nu_n([0,X_n(q)])=q$. 
	
	Similarly, $h$ will always \en{denote an element of $\Hloc(\HH)$} with the law of a $(2,1)$-quantum wedge (in the last exit parameterisation) and $\nu_h=:\nu$  will be the critical boundary measure associated to $h$ (as in Lemma \ref{lem::defnlmqw}). For $q\in \mathcal{Q}$ we define $(X(q),Y(q))$ corresponding to $\nu$ as in \eqref{eqn:XY}, so that $(X(q),Y(q))\in [0,\infty)\times (-\infty,0]$ and $\nu([0,X(q)])=\nu([Y(q),0])=q$. 
	
	\begin{lemma}\label{lem::wedge_meas_convergence}
		There exists a coupling of $((h_n)_{n\in \N},h)$ such that a.s.\ as $n\rta\infty$,
		\[ (h_n, \nu_n)\to (h,\nu).\]
		This is with respect to the topology of \en{$\Hloc(\HH)$} in the first coordinate, and the local weak topology for measures on $\RR$ in the second.
	\end{lemma}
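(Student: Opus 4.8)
\emph{Proof proposal.} The plan is to build all of the wedges $h_n$ and the limiting wedge $h$ on the \emph{same} probability space, so that $h_n\to h$ becomes an almost sure statement, and then to deduce the convergence of the associated boundary measures from Lemma~\ref{lem::cd_critwedge} together with the behaviour of boundary GMC under the addition of a continuous function. I would work in the last exit parametrisation and use the decomposition of Definition~\ref{def:wedge}: fix once and for all a standard Brownian motion $W$, an independent standard Brownian motion $\hat B$, and an independent field $h_{\cir}^{\GFF}$, and note that the law of the circular part does not depend on $\gamma$, so the same $h_{\cir}^{\GFF}$ serves for every $n$ and for the limit.

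For the radial parts, write $\beta_n:=Q_{\gamma_n}-(\gamma_n-2/\gamma_n)=4/\gamma_n-\gamma_n/2$ and $\beta:=Q_2-1=1$, so that $\beta_n\downarrow 1$. Directly from Definition~\ref{def:wedge}, the $s\ge 0$ part of the radial process of a $(\gamma_n,\gamma_n-2/\gamma_n)$-quantum wedge has the law of $(Q_{\gamma_n}s-U^n_s)_{s\ge 0}$, where $U^n$ is a speed-$2$ Brownian motion with drift $\beta_n$ started from $0$ and conditioned to stay positive for all time; by Doob's $h$-transform (with harmonic function $x\mapsto 1-e^{-\beta_n x}$) this conditioned diffusion is the unique entrance solution from $0$ of
\eqbn
dU^n_s=\Big(\beta_n+\frac{2\beta_n}{e^{\beta_n U^n_s}-1}\Big)ds+\sqrt2\,dW_s,
\eqen
and likewise the $s\ge 0$ radial part of the $(2,1)$-quantum wedge is $(2s-U_s)_{s\ge 0}$, with $U$ solving the same equation with $\beta_n$ replaced by $1$. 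I would couple all of these by driving them with the \emph{same} $W$. Since the drift $u\mapsto\beta+2\beta(e^{\beta u}-1)^{-1}$ depends continuously on $\beta$ uniformly on compacts of $(0,\infty)$ and, for every $\beta>0$, has the same singularity $\sim 2/u$ at the origin (so each solution is the entrance law from $0$ of a generalised Bessel diffusion), a standard stability argument for such SDEs gives $U^n\to U$ a.s., uniformly on compacts of $[0,\infty)$. Completing the coupling by taking the $s\le 0$ radial parts to be $\hat B_{-2s}+(\gamma_n-2/\gamma_n)s$ and $\hat B_{-2s}+s$ respectively (same $\hat B$), and the circular parts to be $h_{\cir}^{\GFF}$, Definition~\ref{def:wedge} gives that $h_n$ is a $(\gamma_n,\gamma_n-2/\gamma_n)$-quantum wedge and $h$ a $(2,1)$-quantum wedge.

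Then $g_n:=h_n-h=h_{n,\rad}-h_{\rad}$ is a radial function: with $s=-\log|z|$ it equals $(Q_{\gamma_n}-2)s-(U^n_s-U_s)$ for $s\ge 0$ and $(\gamma_n-2/\gamma_n-1)s$ for $s\le 0$, so $g_n\to 0$ a.s.\ uniformly on every annulus $\{r\le|z|\le R\}$; hence $\|g_n\|_{H^{-1}(U)}\le C_U\|g_n\|_{L^2(U)}\to 0$ for every open bounded $U$ with $\ol U\Subset\HH$, i.e.\ $h_n\to h$ a.s.\ in $\Hloc(\HH)$. For the measures, $\nu^{\gamma_n}_{h_n}$ and $\nu^{\gamma_n}_h$ are both well defined by Lemma~\ref{lem::defnlmqw}, and since $g_n$ is continuous on $\RR\setminus\{0\}$ the semi-circle-average definition gives the pathwise identity $\nu^{\gamma_n}_{h_n}(dz)=e^{(\gamma_n/2)g_n(z)}\,\nu^{\gamma_n}_h(dz)$ on $\RR\setminus\{0\}$, hence on all of $\RR$ since both measures are atomless (Remark~\ref{rmk:propslm}). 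Therefore $\nu_n=\tfrac{1}{2(2-\gamma_n)}\nu^{\gamma_n}_{h_n}=\tfrac12\,e^{(\gamma_n/2)g_n}\cdot\frac{\nu^{\gamma_n}_h}{2-\gamma_n}$, where $\frac{\nu^{\gamma_n}_h}{2-\gamma_n}\to 2\nu_h=2\nu$ in probability (Lemma~\ref{lem::cd_critwedge} applied to the fixed $(2,1)$-quantum wedge $h$) and $\tfrac12 e^{(\gamma_n/2)g_n}\to\tfrac12$ a.s.\ locally uniformly on $\RR\setminus\{0\}$ (and tends to $0$ as $z\to 0$ once $n$ is large, which controls the contribution near the origin); multiplying gives $\nu_n\to\nu$ in probability. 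Combined with $h_n\to h$ a.s., this yields $(h_n,\nu_n)\to(h,\nu)$ in probability, and passing to a subsequence one obtains the almost sure joint convergence.

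I expect the two delicate points to be: (a) making the coupling of the conditioned radial processes precise near $s=0$, where they must be realised as entrance laws from $0$ of Bessel-type diffusions, and proving the a.s.\ locally uniform convergence $U^n\to U$ (equivalently, one could route this through the Williams path decomposition as in the proof of Lemma~\ref{prop:strip}); and (b) the fact that the passage from subcritical to critical boundary measure is only an in-probability statement (Lemma~\ref{lem::cd_critwedge}), so that upgrading to an almost sure \emph{joint} limit forces one to pass to a subsequence — harmless here, since the downstream arguments use this convergence only in distribution.
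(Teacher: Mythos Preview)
Your approach is genuinely different from the paper's and has real merit. The paper argues only at the level of laws: it couples $h_n$ and $h$ in total variation on annuli away from $0$ and $\pm 1$, invokes Lemma~\ref{lem::cd_critwedge} there, and then separately proves that both the fields and the measures are uniformly (in $n$) negligible near $0$ and near $\pm 1$; Skorokhod then upgrades convergence in distribution to an a.s.\ coupling. By contrast, you build a single explicit pathwise coupling (shared circular part, shared driving Brownian motions for the radial SDEs), which is more constructive and has the pleasant feature that $g_n=h_n-h$ is continuous across $|z|=1$, so the unit circle is not a special point for you at all. Your SDE is correct --- in fact one can write the drift as $\beta_n\coth(\beta_n u/2)$ --- and since $\beta\mapsto\beta\coth(\beta u/2)$ is increasing in $\beta$, the comparison theorem gives $U^n\ge U$ and hence the clean one-sided bound $g_n(e^{-s})\le(Q_{\gamma_n}-2)s$ for $s\ge0$.

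The genuine gap is at the origin. The parenthetical ``tends to $0$ as $z\to 0$ once $n$ is large, which controls the contribution near the origin'' does not do the job. It is true that $g_n(e^{-s})\sim(\gamma_n-2/\gamma_n-1)s\to-\infty$ as $s\to\infty$ for each fixed $n$, but the coefficient $\gamma_n-2/\gamma_n-1$ tends to $0$, so this decay is \emph{not} uniform in $n$. The sharpest upper bound your comparison yields is $e^{(\gamma_n/2)g_n(z)}\le|z|^{-c_n}$ with $c_n=\tfrac{\gamma_n}{2}(Q_{\gamma_n}-2)\to 0$, which blows up at $0$ for every finite $n$; turning this into $\sup_n\P(\nu_n([-\delta,\delta])>\eta)\to 0$ as $\delta\to0$ still requires quantitative control of $\tfrac{1}{2-\gamma_n}\nu_h^{\gamma_n}$ near $0$, uniformly in $n$. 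This is exactly the moment estimate the paper imports from \cite{APS18two} (see \eqref{eqn:41} and the surrounding argument), and I do not see how to close the argument within your framework without it or an equivalent input.

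Your delicate point (a) --- realising the conditioned processes as entrance laws from $0$ and proving $U^n\to U$ locally uniformly --- is real but routine (take monotone limits of the $\eps$-started solutions and use the drift comparison above). Your final remark about subsequences is unnecessary: convergence in probability implies convergence in distribution, so Skorokhod gives an a.s.\ coupling along the full sequence, exactly as the paper does.
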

	
	\begin{proof}
		By the Skorokhod representation theorem, it is sufficient to show that $(h_n,\nu_n)$ converges in distribution to $(h,\nu)$ as $n\to \infty$. The idea is that away from $0$ and the unit circle, $h_n$ is arbitrarily close to $h$ in total variation distance for large $n$, so we can essentially just apply Lemma \ref{lem::cd_critwedge} in these regions. We then deal with neighbourhoods of $0$ and the unit circle separately; showing that as the size of the neighbourhoods goes to $0$ the behaviour of $(h_n,\nu_n)$ restricted to these neighbourhoods can be neglected (uniformly in $n$).
		
		To carry out this idea, when $x\in \R$ and $r>s>0$ we write
		$$
		\wh B(x,r):=\{w\in \HH: |w-x|< r\} \text{ and } \wh A(x,r,s):=\wh B(x,r)\setminus \wh B(x,s).
		$$
		First, we observe that for any $(r_i)_{i=1}^4$ such that $r_4>r_3>1>r_2>r_1>0$ there exists a sequence of couplings $(h_n,h)$, such that $\mathbb{P}(h_n=h \text{ on } \wh A(0,r_4,r_3) \cup \wh A(0,r_2,r_1))$ tends to $1$ as $n\to \infty$. Indeed, since we can couple the fields to have the same circular part, this just follows because setting:
		\begin{compactitem}
			\item $\cL_n$ to be the law of a double sided Brownian motion plus drift $(2-2/\gamma_n)$, restricted to some interval $[-M,M]$ and conditioned to stay below the curve $s\mapsto Q_{\gamma_n}s$ for all positive time; and 
			\item $\cL$ to be the law of a double sided Brownian motion with drift $1$, restricted to $[-M,M]$ and conditioned to stay below the curve $s\mapsto 2s$ for all positive time,
		\end{compactitem} then $\cL_n\to \cL$ with respect to total variation distance as $n\to \infty$. 
		Hence, by Lemma \ref{lem::cd_critwedge}, 
		it suffices to show that 
		\eqb  \nu\big([-\delta,+\delta]\cup[-1+\delta,-1-\delta]\cup[1+\delta,1-\delta]\big)\, \to 0; \;\; \|h \|_{\en{\Hinv}(\wh B(0,\delta))}\to 0; \;\; \|h\|_{\en{\Hinv}(\wh A(0,1+\delta, 1-\delta))} \to 0 \label{eqn:mc1}
		\eqe in probability (equivalently, in distribution) as  $\delta\to 0$, and 
		\eqb \mathbb{P}\big( \nu_n([-\delta,\delta]\, \cup\, [-1+\delta,-1-\delta]\, \cup\, [1+\delta,1-\delta])>\eta\big) \to 0 \label{eqn:mc2} \eqe
		\eqb   \mathbb{P}\big( \|h_n \|_{\en{\Hinv}(\wh B(0,\delta))}\ > \eta\big) \to 0; \;\;   \mathbb{P}\big( \|h_n \|_{\en{\Hinv}(\wh A(0,1+\delta,1-\delta))} > \eta\big) \to 0
		\label{eqn:mc3}\eqe
		for any $\eta>0$, uniformly in $n$, as $\delta\to 0$.
		
		The first statement of \eqref{eqn:mc1}  holds because $\nu$ is a.s.\ atomless (Remark \ref{rmk:propslm}). Moreover, \eqref{eqn:mc3} and the last two statements of \eqref{eqn:mc1} follow by
		decomposing $h$ and $(h_n)_{n\in \N}$ into their projections onto $H_1(\HH)$ and $H_2(\HH)$. Indeed, the projections onto $H_2(\HH)$ all have the same law -- that of $h_{\cir}^{\GFF}$ -- and it can be verified by a direct computation that the $\en{\Hinv}(\wh B(0,\delta)\cup \wh A(0,1+\delta, 1-\delta))$ norm of $h^{\GFF}_{\cir}$ goes to $0$ in probability as $\delta\to 0$. The projections onto $H_1(\HH)$, when restricted to $\wh B(0,1+\delta)$, can also all be stochastically dominated (for example) by the random function $$\I_{\{z\in \wh A(0,1+\delta, 1)\}} B_{2\log|z|}-2\log(|z|),$$ 
		where $B$ is a standard Brownian motion. \en{One can easily check that this function has $L^2(\wh B(0,\delta)\cup \wh A(0,1+\delta, 1-\delta))$ norm going to $0$ in probability as $\delta\to 0$, which is more than we need.}
		
		For \eqref{eqn:mc2}, first fix $\eta>0$. We will deal with the neighbourhood $[-\delta,\delta]$ of $0$, and the intervals $[\pm 1 -\delta,\pm 1+\delta]$ around $\pm 1$, separately. To show that $\P(\nu_n([-\delta,\delta]>\eta)\to 0$ uniformly in $n$ as $\delta\to 0$, we observe (as in the proof of Lemma \ref{lem::defnlmqw}) that if $\wh h$ is a Neumann GFF with additive constant fixed so that its average on $\partial \wh B(0,1)$ is equal to $0$, then for every $n$ there exists a random constant $c_n$ such that \eqbn c_n^{-\gamma_n^2/2}\nu_{\wh h+(\gamma_n-2/\gamma_n)\log|\cdot|}^{\gamma_n}([-c_n\delta,c_n \delta]) \eqD (4-2\gamma_n)\nu_n([-\delta,\delta]).\eqen 
		Moreover, the probability that $c_n$ is greater than $M$ goes to $0$ uniformly in $n$ as $M\to \infty$, since, by the proof of Lemma \ref{prop:strip}, $c_n$ has the law of the exponential of minus the last time that a Brownian motion with negative drift $B_{2t}-(Q_{\gamma_n}-\gamma_n+2/\gamma_n)t$ is greater than or equal to $0$. Hence it suffices to show that \eqb \label{eqn:41} (4-2\gamma_n)^{-1}\int_{-\delta}^{\delta} |z|^{-(\gamma_n^2-4)} \nu_{\wh h}^{\gamma_n}(dz) \to 0 \eqe in probability (or, equivalently, in distribution) as $n\to \infty$. However, it follows from \cite[Lemmas 3.1 and 3.2]{APS18two} (with straightforward adaptation to the boundary case) that the integral in \eqref{eqn:41}
		has $(1-\gamma_n/2)^{\text{th}}$ moment converging to $0$ with $\delta$, uniformly in $n$. Since $(4-2\gamma_n)^{(1-\gamma_n/2)}\to 1$ as $n\to \infty$, the result then follows by Markov's inequality.
		
		To show that $\P(\nu_n([\pm 1-\delta,\pm 1 +\delta]>\eta)\to 0$ uniformly in $n$ as $\delta\to 0$, we first note that (by Lemma \ref{lem::cd_critwedge}) this would hold if the fields $h_n$ were all replaced by a Neumann GFF in $\HH$, with additive constant fixed so that its average on $\partial \wh B(0,1)$ is $0$. Then, since 
		\begin{compactitem}
			\item such a Neumann GFF can be written as the sum of $h_\cir^{\GFF}$ plus a random function whose supremum in $\wh B(\pm 1, 2\delta)$ goes to $0$ as $\delta\to 0$, and 
			\item $h_n$ can be written as the sum $h_\cir^{\GFF}+F_n$ where $\P\big(\sup_{\wh B(\pm 1, 2\delta)} F_n \ge a\big)\to 0$ as $\delta\to 0$ uniformly in $n$ for any fixed $a>0$,
		\end{compactitem} the result follows.
	\end{proof}
	
	\begin{lemma}\label{lem::wedge_boundarypoints_conv}
		There exists a coupling of $((h_n,\eta_n)_{n\in \N}, h,\eta)$ such that:
		\begin{itemize}
			\item $(h_n,\eta_n)$ for each $n$ has the marginal law of a $(\gamma_n,\gamma_n-2/\gamma_n)$-quantum wedge and an independent $\SLE_{\kappa_n}$ from $0$ to $\infty$ in $\HH$ ($\kappa_n=\gamma_n^2)$;
			\item $(h,\eta)$ has the marginal law of a $(2,1)$-quantum wedge and an independent $\SLE_4$ from $0$ to $\infty$ in $\HH$;
			\item $(h_n,\eta_n,(X_n(q))_{q\in \mathcal{Q}}, (Y_n(q))_{q\in \mathcal{Q}})$ converges to $(h,\eta, (X(q))_{\qiq}, (Y(q))_{\qiq})$ in probability as $n\to \infty$, with respect to \en{$\Hloc(\HH)$} convergence in the first coordinate, Carath\'{e}odory convergence in the second coordinate, and the product topology on $\RR^{\mathcal{Q}}$ in the third and fourth coordinates.\footnote{Here $(X_n(q),Y_n(q))_{n\in \N, q\in \mathcal{Q}}, (X(q),Y(q))_{q\in \mathcal{Q}}$ are defined with respect to $((h_n)_{n\in \N}, h)$ as in the introduction to this section.}
		\end{itemize}
	\end{lemma}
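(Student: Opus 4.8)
The plan is to realise the coupling as the independent product of two couplings that are already available, and then to read off the convergence of the boundary points $X_n(q),Y_n(q)$ from the convergence of the renormalised boundary measures $\nu_n$ alone.

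First, Lemma~\ref{lem::wedge_meas_convergence} provides, on some probability space, a coupling of $((h_n)_{n\in\N},h)$ under which $(h_n,\nu_n)\to(h,\nu)$ almost surely, with respect to $\Hloc(\HH)$ in the first coordinate and the local weak topology for measures on $\RR$ in the second. Separately, Lemma~\ref{lem:eta_conv} gives $\SLE_{\kappa_n}\Rightarrow\SLE_4$ in the (metrisable) Carath\'eodory topology, so by the Skorokhod representation theorem there is a coupling of $((\eta_n)_{n\in\N},\eta)$, on a second probability space, with $\eta_n$ of law $\SLE_{\kappa_n}$, $\eta$ of law $\SLE_4$, and $\eta_n\to\eta$ almost surely in that topology. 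I take the desired coupling to be the product of these two. On the product space, $(h_n,\eta_n)$ and $(h,\eta)$ are the pairs formed by the corresponding coordinates, while $X_n(q),Y_n(q),X(q),Y(q)$ are the measurable functions of $\nu_n,\nu$ defined in \eqref{eqn:XY}. Since the two factors are independent, $\eta_n$ is independent of $h_n$ and $\eta$ is independent of $h$, so the first two bullet points hold; and almost surely on the product space we have simultaneously $h_n\to h$ in $\Hloc(\HH)$, $\nu_n\to\nu$ locally weakly, and $\eta_n\to\eta$ in the Carath\'eodory topology.

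It remains to show that, almost surely, $X_n(q)\to X(q)$ and $Y_n(q)\to Y(q)$ for every $q\in\cQ$; since $\cQ$ is countable this yields the required convergence in the product topology on $\RR^{\cQ}$ on a single full-measure event (together with the other three coordinates). Fix $q\in\cQ$ with $q>0$ (the case $q=0$ being trivial), and set $F_n(x)=\nu_n([0,x])$, $F(x)=\nu([0,x])$ for $x\ge 0$. By Remark~\ref{rmk:propslm} the measure $\nu$ is almost surely atomless and charges every interval of positive length, and moreover $\nu([0,\infty))=\infty$ almost surely, as is standard for quantum wedges. Hence, almost surely, $F$ is continuous and strictly increasing with $F(0)=0$, $F(\infty)=\infty$, so $X(q)=\inf\{x\ge 0:F(x)\ge q\}$ is finite, strictly positive, and satisfies $F(X(q))=q$. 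Atomlessness of $\nu$ together with local weak convergence gives $F_n(x)\to F(x)$ for every $x\ge 0$. The standard generalised-inverse argument now applies: for $\eps\in(0,X(q))$ we have $F(X(q)-\eps)<q<F(X(q)+\eps)$ by strict monotonicity, hence $F_n(X(q)-\eps)<q<F_n(X(q)+\eps)$ for $n$ large, and since each $F_n$ is nondecreasing this forces $X(q)-\eps\le X_n(q)\le X(q)+\eps$. Thus $X_n(q)\to X(q)$ almost surely, and the same reasoning on the negative half-line gives $Y_n(q)\to Y(q)$; this completes the proof.

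I do not expect a serious obstacle here: the content is essentially packaged in Lemmas~\ref{lem::wedge_meas_convergence} and~\ref{lem:eta_conv}, and the only genuinely new ingredient is continuity at $q$ of the generalised inverses $q\mapsto X_n(q)$, which is a routine consequence of the almost-sure regularity (atomlessness and full support) of the limiting measure $\nu$. The point deserving the most care is verifying that the \emph{product} coupling really does carry the prescribed marginal laws with the stated independence between the field and the curve, and this is immediate once one notes that independence is preserved under forming the independent product of the two couplings.
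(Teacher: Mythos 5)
Your proof is correct and follows essentially the same route as the paper: take the coupling of $((h_n)_{n\in\N},h)$ from Lemma~\ref{lem::wedge_meas_convergence}, independently couple the curves via Lemma~\ref{lem:eta_conv} (the paper asserts convergence in probability; your use of Skorokhod gives the stronger a.s.\ statement but this makes no difference), and then deduce $X_n(q)\to X(q)$, $Y_n(q)\to Y(q)$ from the a.s.\ continuity and strict monotonicity of $F$ (Remark~\ref{rmk:propslm}) by the standard generalised-inverse sandwich argument. The only difference from the paper's presentation is that you spell out the $\varepsilon$-sandwich step explicitly, whereas the paper simply quotes the pointwise convergence of generalised inverses; this is cosmetic.
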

	
	\begin{proof}
		First, by Lemma \ref{lem:eta_conv}, it is possible to couple a sequence of $\SLE_{\kappa_n}$ curves and an $\SLE_{4}$ such that one has convergence with respect to the Carath\'{e}odory topology in probability as $n\to \infty$.  Next, since the curves can be sampled independently of everything else in the statement, it is enough to show that with the coupling of Lemma \ref{lem::wedge_meas_convergence}, we have $(X_n(q))_{q\in \mathcal{Q}}$ converging to $(X(q))_{\qiq}$ and $(Y_n(q))_{\qiq}$ converging to $(Y(q))_{\qiq}$ in probability as $n\to \infty$ (with respect to the product topology on $\RR^{\mathcal{Q}}$). We will show the statement for $X$; the corresponding statement for $Y$ follows by the same argument. 
		
		Let $F_{n}:[0,\infty)\to[0,\infty)$ and $F:[0,\infty)\to[0,\infty)$ describe the cumulative mass of the measures $\nu_{n}$ and $\nu$, i.e., $F_{n}(x)=\nu_{n}([0,x])$ and $F(x)=\nu([0,x])$ for all $x$, and note that a.s.\, by Remark \ref{rmk:propslm}, both are continuous and strictly increasing. 
		This means that $F_{n}$ converges pointwise to $F$ a.s.\ as $n\to \infty$, and hence also that the generalised inverses 
		\eqb F_{n}^{-1}(s)=\inf\{x\in[0,\infty)\,:\, F_{n}(x)\geq s \}\eqe converge pointwise to the generalised inverse $F^{-1}$ (defined analogously) a.s.\ as $k\to \infty$. In particular, this implies that $(X_{n}(q))_{\qiq}$ converges to $(X(q))_{\qiq}$  a.s.\ as $n\to \infty$, with respect to the product topology on $\RR^{\mathcal{Q}}$. 
	\end{proof}
	\\
	
	For what follows, we need to recall the definition of Sheffield's \emph{capacity quantum zipper} \cite{Sh16} for $\gamma\in (0,2)$. 
	\begin{defn}
		\label{def:cqz}
		Let $\gamma\in (0,2)$ and $(\HH,\wh h_0,0,\infty)$ be an equivalence class representative of a $(\gamma,\gamma-2/\gamma)$-quantum wedge. Let $\kappa=\gamma^2$, and let $\wh \eta_0$ be an independent $\SLE_{\kappa}$ in $\HH$ from $0$ to $\infty$. 
		Then the \emph{capacity quantum zipper} is a centered, reverse Loewner flow $(\wh f_t)_{t\ge 0}$ coupled with $(\wh h_0,\wh \eta_0)$, such that:
		\begin{compactitem}
			\item $(\wh f_t)_{t\ge 0}$ is measurable with respect to $\wh h_0$;
			\item the marginal law of $(\wh f_t)_{t\ge 0}$ is a centered, reverse $\SLE_{\kappa}$ flow parameterised by half-plane capacity;
			\item for any $t$ and $x\in \wh \eta_t \setminus \wh f_t(\wh \eta_0)$, denoting by $\eta_x^\L$ and $\eta_x^\Ro$ the left- and right-hand sides of $\eta$ up to $x$, the $\nu^{\gamma}_{\wh h_0}$ length of the intervals $\wh f_t^{-1}(\eta_x^\L)$ and $\wh f_t^{-1}(\eta_x^\Ro)$ agree.
		\end{compactitem}
		This induces a dynamic 
		$$(\wh h_{t}, \wh \eta_{t}):=(\wh h_0 \circ \wh f_{t}^{-1}+Q_\gamma \log |(\wh f_{t}^{-1})'|, \wh f_{t}(\wh \eta_0))$$ on $(\wh h_0, \wh \eta_0)$ which is stationary when observed at \emph{quantum typical times}.	More precisely, for any $l\ge0$, if $$X_l=\inf\{x\ge 0: \nu_{h_0}^\gamma(0,x)=l\} \text{ and } T_l=\inf\{t\ge 0: f_t(X_l)=0\},$$ then $(\wh h_{T_l}, \wh \eta_{T_l})$ is equal in distribution, \emph{as a quantum surface}, to $(\wh h_0, \wh \eta_0)$.\footnote{By this we mean that $(\HH,\wh h_t,0,\infty)$, up to the equivalence described in Definition \ref{def::lqg}, is a $(\gamma,\gamma-2/\gamma)$-quantum wedge, and $\wh \eta_t$ is an $\SLE_\kappa$ that is independent of $\wh h_t$.}
	\end{defn}
	
	This flow thus represents a dynamic welding of $[0,\infty)$ to $(-\infty,0]$, according to the $\gamma$-LQG boundary length. It is essentially the same as the dynamic defined in the (subcritical version of) Theorem \ref{thm:criticalzipper}, but with a different time parameterisation.
	
	Now, assume that $((h_n,\eta_n)_{n\in \N}, h,\eta)$ are coupled together as in Lemma \ref{lem::wedge_boundarypoints_conv} and that $(X_n(q),Y_n(q))_{n\in \N,\qiq}$ and  $(X(q),Y(q))_{\qiq}$ are defined as in \eqref{eqn:XY} with respect to $(h_n)_{n\in \N}$ and $h$, respectively.
	For each $n\in \N$, let $(f_n^t)_{t\ge 0}$ be the centered reverse flow in Definition \ref{def:cqz}, when $(\wh h_0,\wh \eta_0)$ are replaced by $(h_n,\eta_n)$. For $q\in \mathcal{Q}$ we let $\tau_n(q)$ be the time at which $(X_n(q),Y_n(q))$ are mapped to $0$ by $f_n$. For $t\ge 0$, let  $h^t_n=f_n^{t}(h_n):=h_n\circ (f_n^{t})^{-1}+ Q_{\gamma_n}\log|((f_n^{t})^{-1})'|$ and $\eta_n^{t}=f_n^t(\eta_n)$. As in footnote \ref{footnote:offcurve}, although $h_n^t$ is only defined on the slit domain $\HH\setminus f_n^t(\eta_n)$ we can view it as an element of \en{$\Hloc(\HH)$}. Then by the properties described in Definition \ref{def:cqz}, it follows that for any $q\in \mathcal{Q}$:
	\begin{compactitem}
		\item $\eta_n^{\tau_n(q)}$ and $h_n^{\tau_n(q)}$ are independent; 
		\item $\eta_n^{\tau_n(q)}$ has the law of an $\SLE_{\kappa_n}$ from $0$ to $\infty$; and 
		\item $h_n^{\tau_n(q)}$ is (equivalent as a doubly-marked $\gamma_n$-quantum surface to) a $(\gamma_n,\gamma_n-2/\gamma_n)$-quantum wedge. 
	\end{compactitem} 
	We also define $r_n:=f_n^{\tau_n}(X_n(2))$ for each $n$, where the definition of $X_n(q)$ for $q\in \mathcal{Q}$ is extended in the obvious way to $X_n(2)$. Let $\psi_n$ denote the scaling map $z\mapsto r_n z$ on $\HH$.
	
	This next proposition provides, by approximation, the existence of a local conformal welding of $[0,\infty)$ to $(-\infty,0]$ for a $(2,1)$-quantum wedge. 
	\begin{propn}\label{lem::main_criticalzipperconv}
		Suppose that $((h_n,\eta_n)_{n\in \N}, h,\eta)$ are coupled together as in Lemma \ref{lem::wedge_boundarypoints_conv}, on some probability space $(\Omega, \mathcal{F},\mathbb{P})$. Then there exists a conformal map $f:\HH\to\HH$ and a pair $(h', \eta')$ with 
		\begin{equation}\label{eqn::main_conv_gamma2} (h_n,\eta_n,\psi_n^{-1}\circ f_n^{\tau_n(1)}, h_n^{\tau_n(1)}\circ \psi_n+Q_{\gamma_n}\log|r_n|, \psi_n^{-1}(\eta_n^{\tau_n(1)}))\overset{\mathbb{P}}{\longrightarrow} (h,\eta,f,h',\eta') 
		\end{equation}
		as $n\to \infty$ where the convergence is with respect to \en{$\Hloc(\HH)$} in the first and fourth coordinates, with respect to Carath\'{e}odory convergence in the second and fifth coordinates, and with respect to uniform convergence on $\{\HH+\im \eps\}$ for every $\eps>0$ in the third coordinate. Furthermore, we have that: 
		\begin{compactitem}
			\item[(a)] $(\HH, h',0,\infty)$ (viewed as a doubly-marked 2-quantum surface) has the law of a $(2,1)$-quantum wedge, and $\nu_{h'}([0,1])=1$;
			\item[(b)] $\eta'$ has the law of an $\SLE_4$ from $0$ to $\infty$ in $\HH$; 
			\item[(c)] $h'$ and $\eta'$ are independent; 
			\item[(d)] $\eta'=f(\eta)$ and $h'=f(h):=h\circ f^{-1}+2\log |(f^{-1})'|$ a.s.\;
			\item[(e)] $f(X(1))=f(Y(1))=0$, and $f(X(q))=f(Y(q))$ for every $q\in \mathcal{Q}$ a.s.\;
			and finally
			\item[(f)] $(f,h',\eta')$ is measurable with respect to $\sigma(\{h,\eta\})$. 
		\end{compactitem}
		\label{prop:joint-conv}
	\end{propn}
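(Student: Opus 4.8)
The plan is to deduce \eqref{eqn::main_conv_gamma2} and (a)--(f) in four steps: (1) prove tightness of the five-tuple in \eqref{eqn::main_conv_gamma2} (together with the associated renormalised boundary measures); (2) pass to a subsequential limit and check (a)--(e); (3) use the McEnteggart--Miller--Qian theorem (Theorem \ref{thm2}) to show any subsequential limit is a fixed measurable function of $(h,\eta)$, which is (f); and (4) conclude convergence in probability. Writing $\tilde f_n:=\psi_n^{-1}\circ f_n^{\tau_n(1)}$, $\hat h_n:=h_n^{\tau_n(1)}\circ\psi_n+Q_{\gamma_n}\log|r_n|$ and $\tilde\eta_n:=\psi_n^{-1}(\eta_n^{\tau_n(1)})$, a chain-rule computation (in the style of the proof of Lemma \ref{prop:wedge-transform}) yields the pathwise identities $\hat h_n=h_n\circ\tilde f_n^{-1}+Q_{\gamma_n}\log|(\tilde f_n^{-1})'|$ and $\tilde\eta_n=\tilde f_n(\eta_n)\cup(\HH\setminus\tilde f_n(\HH))$; moreover the welding property in Definition \ref{def:cqz} together with the choice of $\tau_n$ gives $\tilde f_n(X_n(1))=\tilde f_n(Y_n(1))=0$ and $\tilde f_n(X_n(q))=\tilde f_n(Y_n(q))$ for every $q\in\cQ$. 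These are the facts we will pass to the limit.

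For tightness, the first two coordinates converge by Lemma \ref{lem::wedge_boundarypoints_conv}. Since $(h_n^{\tau_n(1)},\eta_n^{\tau_n(1)})$ is a $(\gamma_n,\gamma_n-2/\gamma_n)$-quantum wedge decorated by an independent $\SLE_{\kappa_n}$ and chordal $\SLE_{\kappa_n}$ from $0$ to $\infty$ is scale-invariant, $\tilde\eta_n$ is an $\SLE_{\kappa_n}$, so tightness and the identification of its limit as $\SLE_4$ follow from Lemma \ref{lem:eta_conv}. Since $(\HH,\hat h_n,0,\infty)$ is a $(\gamma_n,\gamma_n-2/\gamma_n)$-quantum wedge embedded so that $\nu_n([0,1])=1$, and such wedges converge as $\gamma_n\uparrow2$ to a $(2,1)$-quantum wedge, Lemma \ref{lem::cd_critwedge} (adapted to wedges, as in Lemma \ref{lem::wedge_meas_convergence}) shows that $\hat h_n$ and its renormalised boundary measure are tight, with any joint limit being a $(2,1)$-wedge with $\nu([0,1])=1$. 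The crux is the tightness of $\tau_n(1)$ and of $r_n$, whence --- by compactness of conformal maps --- of $\tilde f_n$ and $\tilde f_n^{-1}$: for $\tau_n(1)=\sigma_n(X_n(1))$ one uses that the swallowing time $\sigma_n$ of reverse $\SLE_{\kappa_n}$ is increasing in its argument, that $X_n(1)$ is tight, and that the swallowing time of a fixed real point by reverse $\SLE_{\kappa_n}$ has a tight law as $\kappa_n\uparrow4$; and $r_n=f_n^{\tau_n(1)}(X_n(2))$ is the $\nu_n$-length-one boundary point of the surface $h_n^{\tau_n(1)}$, so its tightness follows from convergence of the wedge surfaces together with the estimates on wedge boundary measures near the marked point. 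I expect this tightness step --- in particular the control of quantum wedges and of the reverse Loewner flow near the origin --- to be the main obstacle, since it absorbs essentially all of the delicate analysis of the earlier part of the section.

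Next, pass (via Skorokhod) to a subsequence along which the augmented tuple converges a.s.\ to $(h,\eta,f,h',\eta')$ (and the measures converge accordingly); the first two coordinates are $h,\eta$ because $(h_n,\eta_n)\to(h,\eta)$ in probability. Property (b) holds by the previous paragraph. Property (a) holds because the wedge surfaces converge and $\nu_n([0,1])=1\to\nu_{h'}([0,1])$, using the convergence of the renormalised measures to $\nu_{h'}$ and that $\nu_{h'}$ is non-atomic. Property (c) holds because for each $n$ one has $\hat h_n\perp\tilde\eta_n$: conditionally on $h_n^{\tau_n(1)}$ the scaling $\psi_n$ is determined and $\hat h_n$ is a function of $h_n^{\tau_n(1)}$, while $\eta_n^{\tau_n(1)}$ is an $\SLE_{\kappa_n}$ independent of the conditioning, so $\tilde\eta_n=\psi_n(\eta_n^{\tau_n(1)})$ is again $\SLE_{\kappa_n}$ with a conditional law not depending on $h_n^{\tau_n(1)}$; and independence is preserved in the limit. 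Property (d) holds by passing $\hat h_n=\tilde f_n(h_n)$ and $\tilde\eta_n=\tilde f_n(\eta_n)\cup(\HH\setminus\tilde f_n(\HH))$ to the limit: continuity of the coordinate change under $h_n\to h$, $\tilde f_n\to f$, $\tilde f_n^{-1}\to f^{-1}$ is exactly as in the proof of Lemma \ref{prop:wedge-transform}, and since also $\hat h_n\to h'$ and $\tilde\eta_n\to\eta'$ we obtain $h'=f(h)$ and $\eta'=f(\eta)\cup(\HH\setminus f(\HH))$. Property (e) holds by passing $\tilde f_n(X_n(q))=\tilde f_n(Y_n(q))$ and $\tilde f_n(X_n(1))=0$ to the limit, using $X_n(q)\to X(q)$, $Y_n(q)\to Y(q)$ and the boundary regularity of the limiting welding map (the complement of the reverse $\SLE_4$ hull is bounded by a simple curve, so $f$ extends continuously along each of the two boundary arcs mapped onto it), for which the explicit analysis of the Loewner equation on $\RR$ supplies the needed quantitative control.

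Finally, let $(f_1,h_1',\eta_1')$ and $(f_2,h_2',\eta_2')$ be two subsequential limits coupled with the same $(h,\eta)$. By (d) each $f_i$ maps $\HH$ conformally onto the complement $\HH\setminus\tilde K_i$ of a simple curve, with $h_i'=f_i(h)$ and $\eta_i'=f_i(\eta)\cup\tilde K_i$; by (e) both maps induce the same conformal welding of $[0,X(1)]$ with $[Y(1),0]$ (matching $X(q)\leftrightarrow Y(q)$) and send $X(1),Y(1)$ to $0$. Hence $\varphi:=f_2\circ f_1^{-1}$ extends across $\tilde K_1$ to a homeomorphism of $\HH$ which is conformal off $\eta_1'$ and satisfies $\varphi(\eta_1')=\eta_2'$; since $\eta_1'$ and $\eta_2'$ both have the law of $\SLE_4$ by (b), Theorem \ref{thm2} gives that $\varphi$ is a.s.\ a conformal automorphism of $\HH$. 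It fixes $\infty$ (both $f_i$ do) and fixes $0$ (the point where the slit is attached to $\RR$), so $\varphi(z)=\lambda z$ for some $\lambda>0$; then $\nu_{h_1'}([0,1/\lambda])=\nu_{h_2'}([0,1])=1=\nu_{h_1'}([0,1])$ and strict monotonicity of $x\mapsto\nu_{h_1'}([0,x])$ force $\lambda=1$, so $f_1=f_2$, $h_1'=h_2'$, $\eta_1'=\eta_2'$. Thus the subsequential limit is a fixed measurable function of $(h,\eta)$, giving (f); and since every subsequential limit of the tight tuple equals this same function of $(h,\eta)$ while $(h_n,\eta_n)\to(h,\eta)$ in probability, the subsequence characterisation of convergence in probability yields the convergence in \eqref{eqn::main_conv_gamma2}, as claimed.
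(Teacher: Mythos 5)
Your proof follows the paper's exact four-step structure: tightness of the augmented tuple (Lemma~\ref{claim::tightness}), extraction of a subsequential limit, verification of (a)--(e) (Lemma~\ref{claim::properties_sslimit}), and the McEnteggart--Miller--Qian uniqueness argument to obtain (f) and upgrade subsequential weak convergence to convergence in probability along the full sequence. The coordinate-change identity relating $h_n^{\tau_n(1)}\circ\psi_n+Q_{\gamma_n}\log|r_n|$ to $h_n$, the independence argument for (c), and the uniqueness argument for (f) via Theorem~\ref{thm2} all match the paper.

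However, you leave open the one genuinely delicate step, and your sketch of it would not close. For tightness and strict positivity of $r_n$, you appeal to ``convergence of the wedge surfaces together with the estimates on wedge boundary measures near the marked point'', but this is circular: $r_n$ is precisely the unknown normalisation factor in the embedding of $h_n^{\tau_n(1)}$, so you cannot invoke convergence of the \emph{rescaled} surface to control it without already knowing $r_n$ is tight. The paper closes this with two concrete inequalities: the lower bound $|X_n(2)-X_n(1)|\le r_n$ from \cite[Proposition 4.11]{BN11}, and the reverse Loewner estimate $X_n(2)=|(f_n^{\tau_n(1)})^{-1}(r_n)|\ge r_n-|\xi_n^{\tau_n(1)}|$ with $\xi_n$ the driving function of $f_n$; combined with tightness of $\tau_n(1)$ (hence of $|\xi_n^{\tau_n(1)}|$) and the fact that $X_n(2)-X_n(1)$ converges to an a.s.\ positive limit (by the argument of Lemma~\ref{lem::wedge_boundarypoints_conv}), these give $\P(r_n\notin[1/M,M])\to 0$ uniformly in $n$. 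Separately, for property (e) you invoke boundary regularity of the limiting map $f$, but continuity of $f$ alone does not give $\psi_n^{-1}\circ f_n^{\tau_n(1)}(X_n(q))\to 0$: one needs the \emph{boundary} behaviour of the approximating maps to converge as well. This is exactly what the second bullet of the Carathéodory+ topology (Definition~\ref{def::cart_conv}) encodes --- uniform convergence of the swallowing-time functions $\sigma_n\to\sigma$ on compacts of $\R$ --- and is the reason Lemma~\ref{lem::df_conv} was proved; the paper simply passes the identity $\sigma_n(X_n(q))=\tau_n(q)=\sigma_n(Y_n(q))$ to the limit to read off (e) directly.
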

	
	The set-up for the proof is as follows. 
	Consider the joint law of the tuple, for $n\in \N$:
	\begin{equation}\label{eqn::hugetuple} \left(h_n,\eta_n,(X_n(q))_{q\in \mathcal{Q}}, (Y_n(q))_{\qiq}, (f_n^t)_{t\ge 0},\tau_n(1),h_n^{\tau_n(1)}\circ \psi_n+Q_{\gamma_n}\log|r_n|, \psi_n^{-1}(\eta_n^{\tau_n(1)}),r_n\right).\end{equation} 
	We consider the topology of \en{$\Hloc(\HH)$} in the 1st and 7th coordinates, Carath\'{e}odory convergence in the 2nd and 8th coordinates, pointwise convergence (i.e., with respect to product topology on $\RR^{\mathcal{Q}}$) in the 3rd and 4th coordinates, convergence on $\RR$ in the 6th and 9th coordinates, and Carath\'{e}odory+ convergence in the 5th coordinate.
	
	\begin{lemma}\label{claim::tightness}
		With respect to product topology above, the tuple \eqref{eqn::hugetuple} is tight. Furthermore, if $(h,\eta, X,Y,(f^t)_{t\ge 0}, \tau, h',\eta',r)$ denotes a subsequential limit, then (a) and (b) of Lemma  \ref{prop:joint-conv} are satisfied and $r'$ is a.s.\ strictly positive. 
	\end{lemma}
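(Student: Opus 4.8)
The plan is to deduce tightness of \eqref{eqn::hugetuple} from tightness of each of its nine coordinates separately: if for every $n$ the coordinate-$i$ marginal puts all but $\eps/9$ of its mass on a fixed compact set $K_i$, then $K_1\times\dots\times K_9$ carries all but $\eps$ of the joint mass, uniformly in $n$. Coordinates $1$--$4$ are tight because they converge in probability, by Lemmas \ref{lem::wedge_meas_convergence}--\ref{lem::wedge_boundarypoints_conv}. Coordinate $5$ is tight in the Carath\'eodory+ topology by Lemma \ref{lem::df_conv}, since its marginal is the law of a centred reverse $\SLE_{\kappa_n}$ flow. For coordinate $8$, the stationarity of the capacity zipper at quantum-typical times (Definition \ref{def:cqz}) gives that $\eta_n^{\tau_n(1)}$ has the law of an $\SLE_{\kappa_n}$ and is independent of $h_n^{\tau_n(1)}$, hence of $r_n=f_n^{\tau_n(1)}(X_n(2))$, which is a function of $h_n^{\tau_n(1)}$; by scale invariance of SLE, $\psi_n^{-1}(\eta_n^{\tau_n(1)})$ again has the law of an $\SLE_{\kappa_n}$, so coordinate $8$ is tight and, by Lemma \ref{lem:eta_conv}, converges in law to an $\SLE_4$, which gives statement (b) of Lemma \ref{prop:joint-conv} for the limit $\eta'$.

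Next I would identify the marginal law of coordinate $7$. Since $\nu_n=(4-2\gamma_n)^{-1}\nu^{\gamma_n}_{h_n}$, the point $X_n(1)$ is the $\nu^{\gamma_n}_{h_n}$-length-$(4-2\gamma_n)$ boundary point to the right of $0$, so $\tau_n(1)=\sigma_n(X_n(1))$ (with $\sigma_n$ as in Definition \ref{def::cart_conv}) is a quantum-typical time for the capacity zipper, and therefore $h_n^{\tau_n(1)}$ is, as a quantum surface, a $(\gamma_n,\gamma_n-2/\gamma_n)$-quantum wedge. Because the zipper preserves $\nu^{\gamma_n}$-length, the image $r_n$ of $X_n(2)$ is the $\nu_n$-length-one boundary point of $h_n^{\tau_n(1)}$ to the right of $0$, and $\psi_n:z\mapsto r_n z$, a conformal automorphism of $(\HH,0,\infty)$, moves it to $1$; hence coordinate $7$ is distributed as a $(\gamma_n,\gamma_n-2/\gamma_n)$-quantum wedge in the parametrisation with $\nu_n([0,1])=1$. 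Along the coupling of Lemmas \ref{lem::wedge_meas_convergence}--\ref{lem::wedge_boundarypoints_conv}, the last-exit fields $h_n$ converge to $h$ in $\Hloc(\HH)$, the measures $\nu_n$ converge to $\nu$, and $X_n(1)\to X(1)\in(0,\infty)$ a.s.\ (finite and positive by Remark \ref{rmk:propslm} and since the $(2,1)$-wedge has infinite boundary length); consequently the rescaled fields $h_n(X_n(1)\,\cdot)+Q_{\gamma_n}\log X_n(1)$ converge a.s.\ in $\Hloc(\HH)$ to $h(X(1)\,\cdot)+2\log X(1)$, which is the $(2,1)$-quantum wedge in the parametrisation with $\nu([0,1])=1$. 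As coordinate $7$ has the former law for every $n$, it is tight, converges in law to this normalised $(2,1)$-quantum wedge, and statement (a) of Lemma \ref{prop:joint-conv} holds for the subsequential limit $h'$.

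It then remains to treat coordinates $6$ and $9$ and to show $r>0$ a.s. Along the chosen subsequence $f_n\to f$ in the Carath\'eodory+ topology with $f$ a centred reverse $\SLE_4$ flow, so $\sigma_n\to\sigma$ uniformly on compacts; combining this with $X_n(1)\to X(1)$ and $X_n(2)\to X(2)$ a.s.\ (the latter by the same inverse-function argument as in Lemma \ref{lem::wedge_boundarypoints_conv}) and with $0<X(1)<X(2)$ a.s.\ (strict since $\nu([X(1),X(2)])=1>0$ and $\nu$ has full support), one gets $\tau_n(1)=\sigma_n(X_n(1))\to\sigma(X(1))=:\tau$, which handles coordinate $6$. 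Now $\sigma$ is a.s.\ continuous and strictly increasing on $[0,\infty)$ for reverse $\SLE_4$ — as one reads off from the SDE \eqref{eqn:rsle} for $(f^t(x))_{t\ge 0}$ with $x>0$ fixed — so $\sigma(X(2))>\tau$, hence $X(2)$ is not absorbed at time $\tau$, the map $f^{\tau}$ is continuous near $X(2)$, and $f^{\tau}(X(2))>0$. Using $f_n^{\tau_n(1)}\to f^{\tau}$ near $X(2)$ (from Carath\'eodory+ convergence, $\tau_n(1)\to\tau$, and the joint continuity of $(t,x)\mapsto f^t(x)$ off the curve) together with $X_n(2)\to X(2)$, we conclude $r_n\to f^{\tau}(X(2))>0$, so coordinate $9$ is tight and $r>0$ a.s. Tightness of all nine coordinates then gives tightness of the full tuple.

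The step I expect to be the main obstacle is the identification of the marginal law of coordinate $7$: one must confirm that $\tau_n(1)$ is truly a quantum-typical time for the capacity zipper (applying Definition \ref{def:cqz} with $l=4-2\gamma_n$), track the renormalisation constant relating $\nu_n$ and $\nu^{\gamma_n}_{h_n}$, and verify that the normalised $(\gamma_n,\gamma_n-2/\gamma_n)$-wedges converge to the normalised $(2,1)$-wedge by transporting the convergence of boundary measures (Lemma \ref{lem::cd_critwedge}) through the last-exit coupling. A secondary point requiring care is the strict monotonicity of the absorption-time function of reverse $\SLE_4$, which is exactly what forces $r>0$.
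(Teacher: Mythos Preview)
Your treatment of coordinates $1$--$5$, $7$, and $8$ is correct and matches the paper's argument essentially line for line. For coordinate $6$ you take a slightly different route than the paper: rather than bounding $\tau_n(1)\le\sigma_n(M)$ on $\{X_n(1)\le M\}$ and citing uniform tightness of $\sigma_n(M)$, you pass to a subsequence (via Skorokhod) along which the first five coordinates converge a.s., and then read off $\tau_n(1)=\sigma_n(X_n(1))\to\sigma(X(1))$ from the uniform convergence of $\sigma_n$ built into the Carath\'eodory+ topology. This works and is arguably cleaner.

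The genuine gap is in your treatment of coordinate $9$. You claim $r_n=f_n^{\tau_n(1)}(X_n(2))\to f^\tau(X(2))$ ``from Carath\'eodory+ convergence, $\tau_n(1)\to\tau$, and the joint continuity of $(t,x)\mapsto f^t(x)$ off the curve''. But the Carath\'eodory+ topology, as defined in Definition \ref{def::cart_conv}, only gives uniform convergence of $f_n^t$ on sets of the form $[0,T]\times\{\HH+i\eps\}$ together with uniform convergence of $\sigma_n$; it says nothing about convergence of $f_n^t(x)$ for $x$ on the real line. The joint continuity of the \emph{limiting} flow $f$ does not help here, since the issue is passing from $f_n$ to $f$. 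This gap can be filled---for instance by noting that Carath\'eodory convergence of the hulls forces uniform convergence of the driving functions on compacts, and then running a Gr\"onwall argument on the real-line SDE \eqref{eqn:rsle} exactly as in the proof of Lemma \ref{lem::df_conv}; or alternatively by Schwarz-reflecting $f_n^t$ across $\R$ and invoking a normal-families argument once you have a uniform bound on $f_n^{\tau_n(1)}(X_n(2))$---but you need to say this. The paper sidesteps the issue entirely: it bounds $r_n$ from below by $|X_n(2)-X_n(1)|$ (a deterministic contraction property of reverse Loewner maps, cited from \cite{BN11}) and from above by $X_n(2)+|\xi_n^{\tau_n(1)}|$ where $\xi_n$ is the driving function, then uses tightness of these quantities. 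Your approach has the virtue of identifying the limit $r=f^\tau(X(2))$ explicitly and avoiding the external citation, but as written it is incomplete.
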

	\begin{proof}
		First observe that by Lemma \ref{lem::wedge_boundarypoints_conv} we have joint convergence in distribution of the first four coordinates. It remains to prove tightness of the remaining five coordinates, and verify the asserted properties of the subsequential limit. The sequence $((f_n^t)_{t\ge 0})_{n\in \N}$ is tight, since by Lemma \ref{lem::df_conv} we have convergence in distribution to a reverse $\SLE_4$ with respect to the Carath\'{e}odory+ topology. It is also immediate that the sequence $(\eta_n^{\tau_n(1)})_{n\in \N}$ is tight and that $\eta'$ satisfies (b), since by stationarity of the subcritical quantum zipper, the law of $\eta_n^{\tau_n(1)}$ is that of an $\SLE_{\kappa_n}$ curve from $0$ to $\infty$ in $\HH$, and the map $\psi_n$ is independent of $\eta_n^{\tau_n(1)}$.
		
		To see that the sequence $(\tau_n(1))_{n\in \N}$ is tight, first observe that $\mathbb{P}(X_n(1)\ge M)\to 0$ as $M\to \infty$, uniformly in $n$ (since we already know that $X_n(1)$ converges in probability). Therefore, we need only show that for fixed $M$, if $\sigma_M^n$ is the first time $M$ hits $0$ under a reverse $\SLE_{\kappa_n}$ flow, then $\mathbb{P}(\sigma_n(M)>K)\to 0$ as $K\to \infty$ uniformly in $n$. This follows directly from the proof of Lemma \ref{lem::df_conv}.
		
		Finally, by stationarity of the subcritical quantum zipper and definition of $r_n$ we know that for every $n$: \begin{compactitem}\item $(\HH,h_n^{\tau_n(1)},0,\infty)$ is equal in law to a $(\gamma_n,\gamma_n-2/\gamma_n)$-quantum wedge (when viewed as a quantum surface); and  
			\item $(4-2\gamma_n)^{-1}\nu_{h^{\tau_n(1)}_n}^{\gamma_n}([0,r_n]) =1$.
		\end{compactitem}
		It therefore follows from Lemma \ref{lem::wedge_meas_convergence} that $ h_n^{\tau_n(1)}\circ \psi_n+Q_n \log |r_n|$ converges in distribution to the equivalence class representative of a $(2,1)$-quantum wedge in $\HH$, with marked points at $0$ and $\infty$, that gives critical boundary length $1$ to the interval $[0,1]$. In particular, (a) holds and $(h_n^{\tau_n(1)}\circ \psi_n+Q_n \log |r_n|)_{n\in\N}$ is tight. 
		
		To prove tightness of $(r_n)_{n\in \N}$, and the assertion about positivity of any subsequential limit, we will show that
		\eqb \label{eqn:rn} 
		\mathbb{P}(r_n\notin[1/M,M])\to 0 \text{ as } M\to \infty, \text{ uniformly in } n. 
		\eqe
		For this we use the fact \cite[Proposition 4.11]{BN11} that $|X_n(2)-X_n(1)|\le r_n$, and if $\xi_n$ is the driving function of $f_n$, then $X_n(2)=|(f_n^{\tau_n(1)})^{-1}(r_n)|\ge r_n-|\xi_n^{\tau_n(1)}|$. Then \eqref{eqn:rn} follows because $(X_n(1)-X_n(2))$ converges in probability to something a.s.\ positive (by the same reasoning as in Lemma \ref{lem::wedge_boundarypoints_conv}), $\tau_n(1)$ is tight (as explained above), and $\xi_n$ is a Brownian motion run at speed $\sqrt{\kappa_n}$. 
	\end{proof}
	\vspace{0.1cm}
	
	With Lemma \ref{claim::tightness} in hand, let us take a subsequence $(n_k)_{k\in \N}$ such that along this subsequence \eqref{eqn::hugetuple} converges in distribution to a limit \eqb \label{eqn::sslimit} 
	(h,\eta, X,Y,(f^t)_{t\ge 0}, \tau, h',\eta',r).\eqe
	Note that by Lemma \ref{lem::wedge_boundarypoints_conv}, the joint law of this tuple must be such that $[0,X(q)]$ and $[Y(q),0]$ for $q\in \mathcal{Q}$ have critical $\nu_h$-boundary length equal to $q$.
	We further claim the following.
	\begin{lemma} \label{claim::properties_sslimit}  
		The joint law of \eqref{eqn::sslimit} is such that if $\psi_r$ is the scaling map $z\mapsto rz$ and $f':=\psi_r^{-1}\circ f^{\tau}$, then $(h,\eta,f, h',\eta')$ satisfies conditions (a)-(e).
	\end{lemma}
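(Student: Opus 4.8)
The plan is to verify conditions (c), (d), (e) of Proposition~\ref{prop:joint-conv} for the subsequential limit \eqref{eqn::sslimit}, since (a) and (b) are already supplied by Lemma~\ref{claim::tightness}. All three are proved by recording the corresponding identity or property at finite $n$ along the convergent subsequence and passing to the limit using the joint convergence \eqref{eqn::hugetuple}. The organising observation is to set $F_n:=\psi_n^{-1}\circ f_n^{\tau_n(1)}$ and $f:=\psi_r^{-1}\circ f^{\tau}$ (so $f=f'$ in the notation of the lemma): since $\psi_n(z)=r_n z$ we have $Q_{\gamma_n}\log|r_n|=Q_{\gamma_n}\log|\psi_n'|$, so by the chain rule the $7$th and $8$th coordinates of \eqref{eqn::hugetuple} are exactly $F_n(h_n)=h_n\circ F_n^{-1}+Q_{\gamma_n}\log|(F_n^{-1})'|$ and the image curve $F_n(\eta_n)$ together with the filled hull $\HH\setminus F_n(\HH)$ (in the convention of Theorem~\ref{thm:criticalzipper}; cf.\ footnote~\ref{footnote:offcurve}). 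Moreover $F_n\to f$ locally uniformly on $\HH$: Carath\'eodory+ convergence of $(f_n^t)_{t\ge 0}$, continuity of the Loewner flow in $t$, and $\tau_n(1)\to\tau$ give $f_n^{\tau_n(1)}\to f^{\tau}$ locally uniformly on $\HH$, and $r_n\to r>0$ gives $\psi_n^{-1}\to\psi_r^{-1}$.

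\emph{Condition (d).} From $F_n\to f$ locally uniformly we also get $F_n^{-1}\to f^{-1}$ locally uniformly on $f(\HH)$ (convergence of conformal maps upgrades to $C^\infty$ convergence on compact subsets), while $h_n\to h$ in $\Hloc(\HH)$, $\eta_n\to\eta$ in the Carath\'eodory topology, and $Q_{\gamma_n}\to 2$. Continuity of the coordinate-change operation then yields $h'=\lim_n F_n(h_n)=f(h)$, and continuity of the push-forward of curves yields $\eta'=\lim_n F_n(\eta_n)=f(\eta)$ (read with the hull included, as above). The only mild point is that $f(h)$ is a well-defined element of $\Hloc(\HH)$ independently of its values on the measure-zero set $\HH\setminus f(\HH)$, which is the content of footnote~\ref{footnote:offcurve}.

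\emph{Condition (c).} Fix $n$. By the stationarity built into Definition~\ref{def:cqz}, $\eta_n^{\tau_n(1)}$ and $h_n^{\tau_n(1)}$ are independent and $\eta_n^{\tau_n(1)}$ is an $\SLE_{\kappa_n}$ from $0$ to $\infty$. Next, $f_n^{\tau_n(1)}$ is conformal across the interval $[X_n(1),X_n(2)]$, which lies outside the hull base $[Y_n(1),X_n(1)]$, and maps it increasingly onto $[0,r_n]$; by conformal covariance of $\nu^{\gamma_n}$ and the definition \eqref{eqn:XY} of $X_n(1),X_n(2)$ this gives $\nu^{\gamma_n}_{h_n^{\tau_n(1)}}([0,r_n])=\nu^{\gamma_n}_{h_n}([X_n(1),X_n(2)])=4-2\gamma_n$. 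Since $r\mapsto\nu^{\gamma_n}_{h_n^{\tau_n(1)}}([0,r])$ is a.s.\ continuous and strictly increasing (Remark~\ref{rmk:propslm}), $r_n$, and hence $\psi_n$, is a measurable function of $h_n^{\tau_n(1)}$. Therefore the $7$th coordinate $h_n^{\tau_n(1)}\circ\psi_n+Q_{\gamma_n}\log|r_n|$ is $\sigma(h_n^{\tau_n(1)})$-measurable, while the $8$th coordinate $\psi_n^{-1}(\eta_n^{\tau_n(1)})$, being the image of an independent $\SLE_{\kappa_n}$ from $0$ to $\infty$ under a scaling fixing $0$ and $\infty$, is again an $\SLE_{\kappa_n}$ from $0$ to $\infty$ whatever the value of $\psi_n$, hence independent of $h_n^{\tau_n(1)}$. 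So the $7$th and $8$th coordinates are independent for each $n$, and independence is preserved in the weak limit, giving $h'\perp\eta'$.

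\emph{Condition (e), and the main obstacle.} For each $n$ and $q\in\mathcal{Q}$ the welding property of Definition~\ref{def:cqz} identifies $X_n(q)$ and $Y_n(q)$, so $f_n^{\tau_n(1)}(X_n(q))=f_n^{\tau_n(1)}(Y_n(q))$, with common value $0$ when $q=1$ by the definition of $\tau_n(1)$; equivalently $F_n(X_n(q))=F_n(Y_n(q))$, and $=0$ for $q=1$. The difficulty is that the $F_n$ are controlled by \eqref{eqn::hugetuple} only on compacts of the \emph{open} half-plane, so these boundary identities are not obviously stable under $n\to\infty$; this is exactly the situation the Carath\'eodory+ topology (Definition~\ref{def::cart_conv}) is designed to handle, being the reason Lemma~\ref{lem::df_conv} was stated in that topology. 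Concretely, $\sigma_n(X_n(q))=\tau_n(q)=\sigma_n(Y_n(q))$ (the time each point is carried to $0$), so $\sigma_n\to\sigma$ locally uniformly together with $X_n(q)\to X(q)$, $Y_n(q)\to Y(q)$ and $\tau_n(1)\to\tau$ give $\sigma(X(q))=\sigma(Y(q))$ and $\sigma(X(1))=\sigma(Y(1))=\tau$. Since $\kappa=4$, the reverse $\SLE_4$ hull $K:=\HH\setminus f^{\tau}(\HH)$ is a.s.\ a simple curve, and two boundary points of $\HH$ absorbed by the reverse flow at the same time are carried by $f^{\tau}$ to the same point of $K$ (and, when that time is $\tau$, to its root, which equals $0$). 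Hence $f^{\tau}(X(q))=f^{\tau}(Y(q))$ and $f^{\tau}(X(1))=f^{\tau}(Y(1))=0$; applying $\psi_r^{-1}$ gives (e) and completes the verification.
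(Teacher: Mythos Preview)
Your proof is correct and follows essentially the same route as the paper's: (a) and (b) come from Lemma~\ref{claim::tightness}; (c) from independence of the $7$th and $8$th coordinates at each finite $n$ (you supply more detail than the paper, which simply asserts this); (d) by upgrading to a.s.\ convergence via Skorokhod and using locally uniform convergence of the maps; and (e) via the Carath\'eodory+ topology. Your treatment of (e) for $q<1$ is in fact slightly more direct than the paper's: rather than extracting a further subsequence along which $\tau_n(q)$ converges, you observe that uniform convergence $\sigma_n\to\sigma$ combined with $X_n(q)\to X(q)$, $Y_n(q)\to Y(q)$ already gives $\sigma(X(q))=\sigma(Y(q))$, and then the reverse Loewner flow property yields $f^\tau(X(q))=f^\tau(Y(q))$.
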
 
	
	We first show how to conclude the proof of Proposition \ref{lem::main_criticalzipperconv} using Lemma \ref{claim::properties_sslimit}, and then turn to the proof of the lemma itself. \vspace{0.1cm}
	
	\begin{proof2}{Proposition \ref{prop:joint-conv}} Letting $(h,\eta,f,h',\eta')$ be as in Lemma \ref{claim::properties_sslimit}, it follows from \cite{MMQ18} that $(f,h',\eta')$ must be measurable with respect to $h$ and $\eta'$. Indeed, if $(h,\eta,f_1,h'_1,\eta'_1,f_2,h'_2,\eta'_2)$ is a coupling such that $(h,\eta,f_i,h'_i,\eta'_i)$ satisfies (a)-(e) for $i=1,2$ then it follows from Theorem \ref{thm2} that $f_1\circ f_2^{-1}$ is a conformal automorphism of $\HH$ that fixes $\{0,\infty\}$, and moreover by (a) and (d), that $f_1(h)$ and $f_2(h)$ give the same critical boundary length to the interval $[0,1]$. This implies that $f_1=f_2$ a.s.\, and so $(f_i,h_i,\eta_i)=(f_i, f_i(h),f_i(\eta))$ are equal for $i=1,2$ a.s.\  Since $(h_n,\eta_n)$ converges in probability to $(h,\eta)$ as $n\to \infty$, and $(\psi_n^{-1}\circ f_n^{\tau_n(1)}, h_n^{\tau_n(1)}\circ \psi_n+Q_{\gamma_n}\log|r_n|, \psi_n^{-1}(\eta_n^{\tau_n(1)}))$ is also measurable with respect to $(h_n,\eta_n)$ for each $n$, this implies that the convergence
		\eqbn \Big(h_n,\eta_n,\psi_n^{-1}\circ f_n^{\tau_n(1)}, h_n^{\tau_n(1)}\circ \psi_n+Q_{\gamma_n}\log|r_n|, \psi_n^{-1}(\eta_n^{\tau_n(1)})\Big)  \to (h,\eta, f,h',\eta') \eqen
		along the subsequence $(n_k)_{k\in \N}$ is actually a limit in probability, and by uniqueness, that it holds along the whole sequence $n\to \infty$.
	\end{proof2}
	\\
	
	\begin{proof2}{Lemma \ref{claim::properties_sslimit}} By Lemma \ref{claim::tightness}, properties (a) and (b) are satisfied. Property (c) is satisfied because $h^{\tau_n(1)}\circ \psi_n+Q\log|r_n|$ and $\psi_n^{-1}(\eta^{\tau_n(1)})$ are independent for every $n$.
		
		To show that property (d) is satisfied, let us via Skorokhod embedding assume that we have joint convergence of the whole tuple a.s.\ along the subsequence $(n_k)_{k\in \N}$. Then since a.s.
		\eqbn  r_{n_k}\to r\in (0,\infty);\,\, \tau_{n_k}(1)\to \tau<\infty;\,\,
		(f_{n_k}^t)_{t\ge 0}\to (f^t)_{t\ge 0} \text{ uniformly on compacts of time and space},
		\eqen
		it follows that $(\psi_{n_k}^{-1}\circ f_{n_k}^{\tau_{n_k}(1)})$ converges to $f$ uniformly on compacts of $\HH$ a.s.  From this, because 
		\eqbn \eta_{n_k}\to \eta \text{ and } \psi_{n_k}^{-1}(\eta_{n_k}^{\tau_{n_k}(1)})=(\psi_{n_k}^{-1}\circ f_{n_k}^{\tau_{n_k}(1)})(\eta_{n_k})\to \eta' \text{ a.s.},\eqen 
		we see that with probability one $\eta'=f(\eta)$.

		To verify that $h'=f(h)$ (since $\eta'$ is independent of $h'$ and the Lebesgue measure of the $\eps$-neighborhood of the $\eta'$ restricted to any compact set goes to zero as $\eps\rta 0$) 
		we only need to check that for any test function $\rho$ with compact support in $\HH$, we have
		\[ \left(h'-2\log|(f^{-1})'| \, , \, |f'|^{-2} (\rho \circ f^{-1}) \right) = (h,\rho) 
		\text{ a.s.}, \]
		where, by definition, \eqb
		(h,\rho) = ( f(h)-2\log|(f^{-1})'|,|f'|^{-2}(\rho\circ f^{-1}) ).
		\eqe
		However, since the support of $\rho$ is compact and we have seen above that $(\psi_{n_k}^{-1}\circ f_{n_k}^{\tau_{n_k}(1)})
		\to f$ uniformly on compacts of $\HH$ a.s., the sequence 
		\eqbn 
		(h_{n_k},\rho)= \left(h_{n_k}^{\tau_{n_k}(1)}\circ
		\psi_{n_k} - Q_{\gamma_{n_k}} \log \big(|(\psi_{n_k}^{-1}\circ f^{\tau_{n_k}}(1))^{-1}\big)'|)\, , \, |(
		\psi_{n_k}^{-1}\circ f_n^{\tau_{n_k}(1)})'|^{-2} (\rho \circ (\psi_{n_k}^{-1}\circ f_n^{\tau_{n_k}(1)})^{-1}) \right)
		\eqen
		converges to 
		$(h'-2\log|(f^{-1})'|,|f'|^{-2}\rho\circ f^{-1})$
		a.s.\ as $k\to \infty$. On the other hand, because $h_{n_k}$ converges to $h$, we have $(h_{n_k}, \rho)\to (h,\rho)$ a.s.\ as $k\to \infty$. This implies the result.

		Finally, we need to show property (e). For this, recall the definitions of $\sigma_n$ and $\sigma$ from the definition of Carath\'{e}odory+ convergence, and note that $\sigma_n(X_n(1))=\tau_n(1)$ for each $n$. Since $(f_{n_k},X_{n_k},Y_{n_k},\tau_{n_k}(1))\Rightarrow (f,X,Y,\tau)$ by assumption, and since convergence in the first coordinate is with respect to the Carath\'{e}odory+ topology,\footnote{Recall that this topology requires \emph{uniform} convergence of the functions $\sigma_n$.} we have that $(\sigma_{n_k}(X_{n_k}(1)),\sigma_{n_k}(Y_{n_k}(1)), \tau_{n_k}(1))\Rightarrow (\sigma(X(1)),\sigma(Y(1)), \tau)$ as $k\to \infty$. On the other hand, the left-hand side is actually equal to $(\tau_{n_k}(1),\tau_{n_k}(1),\tau_{n_k}(1))$ for every $k$, and we clearly have $(\tau_{n_k}(1),\tau_{n_k}(1),\tau_{n_k}(1))\Rightarrow (\tau,\tau,\tau)$. Hence it must be the case that $\sigma(X(1))=\tau=\sigma(Y(1))$ with probability one, and since $f^{\sigma(x)}(x)=0$ for every $x$ (by definition of $\sigma$), this implies that 
		\[ f^{\tau}(X(1))=f^\tau(Y(1))=0 \text{ a.s.}\]
		Since $f=\psi_{r}^{-1}\circ f^\tau$, the same holds a.s.\ if $f^\tau$ is replaced with $f$.
		
		For $q<1$ we observe that the sequence $\tau_n(q)$ is also tight in $n$, and so we may pass to a further subsequence along which the tuple formed by appending $\tau_n(q)$ to \eqref{eqn::hugetuple} converges in distribution to $(h,\eta,X,Y,(f^t)_{t\ge 0}, \tau, h^\tau, \eta^\tau, r, \tau')$. Then repeating the same argument as above with $1$ replaced by $q$, we see that $f^{\tau'}(X(q))=f^{\tau'}(Y(q))=0$ a.s.  Moreover, since $\tau_n(q)\le \tau_n(1)$ for every $n$ we have $\tau'\le \tau$ a.s.\. These two facts together (and using that $(f^t)_{t\ge 0}$ is a centred, reverse Loewner flow) imply that $f^\tau(X(q))=f^\tau(Y(q))$ with probability one. Again, this still holds a.s.\ if $f^\tau$ is replaced with $f$.
	\end{proof2}
	
	\begin{remark0}\label{rmk:stationary_coupling} Observe that if $(h,\eta,f,h',\eta')$ are as in Proposition \ref{prop:joint-conv} then by applying a scaling that puts $h'$ in the last exit parametrisation, we obtain the map from the statement of Theorem \ref{thm:criticalzipper} with $t=1$.
	\end{remark0}

	\label{sec:criticalzipperapprox}

	\section{Proof of main results}\label{sec:mainproofs}
	
	In this section we conclude the proof of Theorems \ref{thm1} and \ref{thm:criticalzipper} by combining results of Sections \ref{sec:zoom} and \ref{sec:approx}.\vspace{0.1cm}
	
	\begin{proof2}{Theorem \ref{thm:criticalzipper}}
		The theorem follows immediately from Remark \ref{rmk:stationary_coupling}, noting that everything generalises trivially if the special value $1$ in Section \ref{sec:criticalzipperapprox} is replaced with any other $t> 0$.
	\end{proof2} 
	
	\begin{propn} Let $(h,\eta,\cX,\cY)$ be such that $(\HH,h,0,\infty)$ is a $(2,1)$-quantum wedge in the last exit parametrisation, $\eta$ is an independent chordal SLE$_4$ in $\HH$ from 0 to $\infty$, and  $\cX,\cY\in\RR$ are sampled by choosing $\cX$ from the uniform distribution on $[0,1]$ and then letting $\cY<0$ be such that $\nu([\cY,0])=\nu([0,\cX])$. Let $D_{\op{L}}\subset\HH$ (resp., $D_{\op{R}}\subset\HH$) be the domain which is to the left (resp., right) of $\eta$. Then the pair of doubly-marked $2$-quantum surfaces $(D_{\op{L}},h+C, \cY,\infty), (D_{\op{R}},h+C,\cX,\infty)$ converge as $C\to \infty$ to a pair of independent $(2,2)$-quantum wedges.
		\label{prop:zoom-twopoints}
	\end{propn}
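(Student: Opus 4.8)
The plan is: (i) use the locality result of Section~\ref{sec:zoom} to reduce to a statement about a $(2,1)$-quantum wedge viewed near two quantum-typical boundary points on opposite sides of $0$; (ii) identify each of the two limiting surfaces as a $(2,2)$-quantum wedge via Proposition~\ref{prop:zoom}; and (iii) obtain their independence by conditioning on ``macroscopic'' data and invoking Markov properties. Write $\ell:=\nu([0,\cX])\,(=\nu([\cY,0]))$, so that $\cX,\cY$ are the points at quantum distance $\ell$ to the right and left of $0$; then $\ell$ is uniform on $[0,1]$, and since conditionally on $h$ the point $\cX$ is $\nu$-typical on the interval $[0,X(1)]$ of quantum length $1$, $\ell$ is independent of $h$. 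In particular $\cX>0>\cY$ a.s., a.s.\ $\cX\neq-\cY$ (as is readily checked), $\eta$ is independent of $(h,\cX,\cY)$, and a.s.\ $\cX,\cY$ lie at positive distance from $\eta\cup\{0,\infty\}$.

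For (i): since a.s.\ $\cX$ is at positive distance from $\eta\cup\{0\}$, for small $r$ the half-disk $\wh B(\cX,r)$ is contained in $D_{\op{R}}$, and similarly $\wh B(\cY,r)\subset D_{\op{L}}$; so by Lemma~\ref{prop:wedge-transform}, applied (conditionally on $\eta$) with $\wh D=\HH$ and $D=D_{\op{R}}$ resp.\ $D=D_{\op{L}}$ — in the version valid for the \emph{pair} of surfaces, which is proved exactly as Lemma~\ref{prop:wedge-transform} since the conformal distortions used there are supported near $\cX$ and near $\cY$, which are disjoint — the assertion of the proposition is equivalent to the same assertion with $D_{\op{L}},D_{\op{R}}$ replaced by $\HH$. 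As $\eta$ then drops out, it remains to prove that for $(h,\cX,\cY)$ as above, $\bigl((\HH,h+C,\cY,\infty),(\HH,h+C,\cX,\infty)\bigr)$ converges in law as $C\to\infty$ to a pair of \emph{independent} $(2,2)$-quantum wedges. For (ii): each of $(\HH,h+C,\cX,\infty)$ and $(\HH,h+C,\cY,\infty)$ converges to a $(2,2)$-quantum wedge by Proposition~\ref{prop:zoom}; since $\cX$ is a.s.\ bounded away from $0$ and $\infty$, the $(2,1)$-wedge field differs near $\cX$ from a free/Dirichlet-boundary GFF by an a.s.\ smooth function, so its local law is absolutely continuous with respect to that of such a GFF, and Proposition~\ref{prop:zoom} (for the $\nu$-typical point $\cX$, conditionally on $(\cX,\nu([0,\cX]),\nu([\cX,X(1)]))=(\cX,\ell,1-\ell)$) combined with Lemma~\ref{prop:wedge-transform} gives the claim; similarly at $\cY$.

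For (iii): fix $\delta>0$ and work on $G_\delta:=\{\cX>\delta,\ |\cY|>\delta,\ |\cX-|\cY||>2\delta\}$, with $\mathbb{P}(G_\delta)\to1$ as $\delta\downarrow0$; put $\wh B_{\cX}=\wh B(\cX,\delta)$, $\wh B_{\cY}=\wh B(\cY,\delta)$ and $\mathcal{H}_\delta=\sigma\bigl(\ell,\cX,\cY,h|_{\HH\setminus(\wh B_{\cX}\cup\wh B_{\cY})}\bigr)$. I would establish: (a) conditionally on $\mathcal{H}_\delta$, the fields $h|_{\wh B_{\cX}}$ and $h|_{\wh B_{\cY}}$ are independent, with the conditional law of $h|_{\wh B_{\cX}}$ not involving $\cY$ (and symmetrically); and (b) conditionally on $\mathcal{H}_\delta$, each of $(\HH,h+C,\cX,\infty)$, $(\HH,h+C,\cY,\infty)$ still converges in law to a $(2,2)$-quantum wedge, with limit not depending on the value of $\mathcal{H}_\delta$. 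Granting these, the conclusion follows: by the proof of Lemma~\ref{prop:wedge-transform} in quantitative form, for large $C$ the surface $(\HH,h+C,\cX,\infty)$ agrees up to vanishing error with a surface depending only on $(\cX,h|_{\wh B_{\cX}})$, and likewise at $\cY$, so (a)+(b) force the pair to converge — conditionally on $\mathcal{H}_\delta$, hence conditionally on $\sigma(\ell,\cX,\cY)$ and unconditionally — to independent $(2,2)$-wedges; since the limit is $\delta$-independent and $G_\delta$ exhausts the space, we are done. For (a): write $h=h_{\rad}+h_{\cir}$, $h_{\cir}\eqD h_{\cir}^{\GFF}$; the domain Markov property makes $h_{\cir}|_{\wh B_{\cX}}$ and $h_{\cir}|_{\wh B_{\cY}}$ conditionally independent given $h_{\cir}$ off the two half-disks, while $h_{\rad}|_{\wh B_{\cX}}$ and $h_{\rad}|_{\wh B_{\cY}}$ are measurable functions of the one-dimensional radial process on two disjoint radius-intervals (this uses $\cX\neq|\cY|$), hence conditionally independent by its Markov property; conditioning additionally on $\cX=x,\cY=y$ preserves this since, given $\ell$ and $\nu$ on $[0,x-\delta]$, the event $\{\cX=x\}$ is a function of $\nu$ on $(x-\delta,x]\subset\wh B_{\cX}$ alone, and symmetrically. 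For (b): by locality this concerns only the conditional law of $h|_{\wh B_{\cX}}$ given $\mathcal{H}_\delta$, and it follows on inspecting the proof of Proposition~\ref{prop:zoom}: conditioning on $h$ away from $\cX$ merely fixes the coarse-scale part of the Bessel-type radial process (decomposed around $\cX$, not around $0$) and the harmonic part of the circular field on $\wh B_{\cX}$, and the last-exit/Williams-type decomposition of the radial Bessel process producing the $(2,2)$-wedge (as in Lemma~\ref{prop:zoom0}) is insensitive to its starting data.

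The main obstacle is step (iii). The difficulty is a genuine tension: because the radial part of a $(2,1)$-quantum wedge is a single process straddling $0$ and the circular part is logarithmically correlated across the two sides, decoupling the fields near $\cX$ and near $\cY$ really does require conditioning on the field far from \emph{both} points — conditioning on $(\ell,\cX,\cY)$ alone is not enough — yet this conditioning must be weak enough not to perturb the zoom-in limits. Reconciling (a) and (b), and extracting from Section~\ref{sec:zoom} the precise quantitative locality and the robustness statement (b) that the argument needs, is where the real work lies.
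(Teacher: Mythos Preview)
Your steps (i) and (ii) are fine and match the paper in spirit. The real divergence is in (iii), and this is where the paper's argument is both different and cleaner.

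The paper does not try to condition on $h$ in the complement of two small half-disks around $\cX$ and $\cY$. Instead it first rescales: it replaces the last exit parametrisation by an ``$R$-unit circle embedding'' (chosen so that $h_{\rad}(e^{-s})-2s$ first hits $R$ at $s=0$), under which, for $R$ large, both $\wh\cX$ and $\wh\cY$ lie in the unit half-disk with probability close to $1$. In this parametrisation, $h|_{\D^+}$ is exactly a Neumann GFF in $\D^+$ plus $-\log|\cdot|$ (with a specific additive constant). The paper then conditions on the $\sigma$-algebra $\cF$ generated by $h$ on the imaginary axis and on the unit semicircle. Given $\cF$, the restrictions of $h$ to the first and second quarter-disks $U_{\op{R}},U_{\op{L}}$ are \emph{exactly} independent mixed-boundary GFFs (free on $\RR$, continuous Dirichlet on the rest of the boundary), which is precisely the hypothesis of Proposition~\ref{prop:zoom}. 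So one applies Proposition~\ref{prop:zoom} and Lemma~\ref{prop:wedge-transform} in each quadrant, conditionally on $\cF$ together with the four boundary lengths $\nu([-1,\cY]),\nu([\cY,0]),\nu([0,\cX]),\nu([\cX,1])$, and independence of the two limiting wedges is automatic.

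The point is that the paper's conditioning is chosen so that the post-conditioning field is \emph{literally} of the type handled by Proposition~\ref{prop:zoom}, and no ``robustness'' statement like your (b) is needed. By contrast, your conditioning on $\mathcal{H}_\delta$ leaves you with a field in $\wh B_{\cX}$ whose law is a GFF with Dirichlet data on the semicircle \emph{plus} the restriction of the radial process $h_{\rad}$ conditioned on its values outside an interval of radii, and you then need to argue that the Section~\ref{sec:zoom} machinery (which is written for a genuine mixed-boundary GFF) still applies. You correctly flag this as the main obstacle; it is not insurmountable, but it would require re-opening the proof of Lemma~\ref{prop:zoom0} and checking that the Bessel/Williams decomposition survives the extra bridge-type conditioning on $h_{\rad}$. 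The paper's device of separating along the imaginary axis avoids all of this: it turns the one awkward radial process into two honest Dirichlet boundary conditions, one per quadrant.
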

	\begin{proof}
		The $R$\emph{-unit circle embedding} is defined just as the unit circle embedding (Definition \ref{def:lastexit}), except that $s\mapsto h_{\rad}(e^{-s})-Q_2 s$ hits $R$ (rather than 0) for the first time at $s=0$. For a given $R>1$ make a change of coordinates $z\mapsto rz$ via \eqref{eqn:coc} (with $r$ random and depending on $R$) such that the field $h$ has the $R$-unit circle embedding. Let $\nu$ denote the corresponding boundary length measure, and let 
		$\wh \cX=r\cX$ and $\wh \cY=r\cY$ be the images of $\cX$ and $\cY$ respectively under the change of coordinates.
		Since $\nu([-1,0])$ and $\nu([0,1])$ converge in law to $\infty$ as $R\rta\infty$, we see that with probability converging to 1 as $R\rta\infty$, we have $\wh \cX\in(0,1)$ and $\wh \cY\in(-1,0)$. 
		
		Notice that $h$ restricted to the unit semi-disk $\D^+\subset\HH$ has the law of a free boundary GFF in $\D^+$ plus $z\mapsto -\log |z|$, with additive constant chosen such that the field restricted to the unit semi-circle has average $R$. Let $\cF$ denote the $\sigma$-algebra generated by $h$ restricted to the parts of the imaginary axis and the unit circle that are contained in $\HH$. Let $U_{\op{R}}$ (resp., $U_{\op{L}}$) denote the unit disc restricted to the first (resp., second) quadrant. Then $h|_{U_{\op{R}}}$ and $h|_{U_{\op{L}}}$ are independent conditioned on $\cF$, and $h|_{U_{\op{R}}}$ (resp., $h|_{U_{\op{L}}}$) has the law of a mixed boundary GFF with continuous Dirichlet boundary conditions on $\partial U_{\op{R}}\cap\HH$ (resp., $\partial U_{\op{L}}\cap\HH$) and free boundary conditions on $\partial U_{\op{R}}\cap\RR$ (resp., $\partial U_{\op{L}}\cap\RR$). The proposition now follows from Proposition \ref{prop:zoom} and Lemma \ref{prop:wedge-transform} applied with $U_{\op{L}},U_{\op{R}},D_{\op{L}}$, and $D_{\op{R}}$, when we condition on the $\sigma$-algebra generated by $\cF$ in addition to $\nu([-1,\cY])$, $\nu([\cY,0])$, $\nu([0,\cX])$, and $\nu([\cX,1])$.
	\end{proof}
	
	\begin{proof2}{Theorem \ref{thm1}}
		Consider the ``critical zipper'' $(h_t,\eta_t)_{t\in \R}$ of Theorem \ref{thm:criticalzipper} and 
		let $\cX$ and $\cY$ be as in Proposition \ref{prop:zoom-twopoints} with respect to $(h_0,\eta_0)$. Define $s=\nu([ 0,\cX])$ and ``zip up" by time $\tau(s)=\tau$, i.e., consider $(h_\tau, \eta_{\tau})$, 
		and add a constant $C$ to the field. By Lemma \ref{prop:zoom-twopoints}, as $C\rta\infty$ the surfaces to the left and to the right of $\eta_\tau$ (both with marked points at $0$ and $\infty$) converge to independent $(2,2)$-quantum wedges. Furthermore, the law of the pair $(h_\tau,\eta_\tau)$ is that of a (2,1)-quantum wedge, which is invariant under adding any constant $C$ to the field. Thus, taking a limit as $C\to \infty$ proves the first statement of the theorem. The statement concerning boundary lengths follows directly from Theorem \ref{thm:criticalzipper}. 
	\end{proof2}

	\bibliographystyle{abbrv}
	\bibliography{EP_bibliography}
\end{document}